\title{The rational torsion subgroup of $J_0(\mathfrak{p}^r)$}
\author{Sheng-Yang Kevin Ho}
\newcommand{\Addresses}{{% additional braces for segregating \footnotesize

  \footnotesize

  \textsc{Department of Mathematics, The Pennsylvania State University, State College, PA 16801, USA}\par\nopagebreak
  \textit{E-mail address}: \texttt{kevinho@psu.edu}
}}
\date{}
\newcommand{\GL}{\operatorname{GL}}
\newcommand{\PGL}{\operatorname{PGL}}
\newcommand{\Div}{\operatorname{Div}}
\newcommand{\Tr}{\operatorname{Tr}}
\newcommand{\divisor}{\operatorname{div}}
\newcommand{\cusp}{\operatorname{cusp}}
\newcommand{\ord}{\operatorname{ord}}
\newcommand{\CC}{\mathcal{C}}
\newcommand{\TT}{\mathcal{T}}
\newcommand{\Tree}{\mathscr{T}}
\newcommand{\p}{\mathfrak{p}}
\newcommand{\q}{\mathfrak{q}}
\newcommand{\n}{\mathfrak{n}}
\newcommand{\m}{\mathfrak{m}}
\newcommand{\HarZ}{\mathcal{H}(\Tree,\mathbb{Z})}
\newcommand{\HarQ}{\mathcal{H}(\Tree,\mathbb{Q})}
\theoremstyle{plain}
\newtheorem{thm}{Theorem}[section]
\newtheorem{lem}[thm]{Lemma}
\newtheorem{conj}[thm]{Conjecture}
\newtheorem{prop}[thm]{Proposition}
\theoremstyle{definition}
\newtheorem{defn}[thm]{Definition}
\theoremstyle{remark}
\newtheorem{rem}[thm]{Remark}
\newtheorem{Ex}{Example}
\newcommand\restr[2]{{% we make the whole thing an ordinary symbol
  \left.\kern-\nulldelimiterspace % automatically resize the bar with \right
  #1 % the function
  \vphantom{\big|} % pretend it's a little taller at normal size
  \right|_{#2} % this is the delimiter
  }}
\NewDocumentCommand{\tens}{e{_^}}{%
  \mathbin{\mathop{\otimes}\displaylimits
    \IfValueT{#1}{_{#1}}
    \IfValueT{#2}{^{#2}}
  }%
}
\begin{document}
\maketitle
\Addresses

\begin{abstract}
Let $\mathfrak{n}=\mathfrak{p}^r$ be a prime power ideal of $\mathbb{F}_q[T]$ with $r\geq 2$. We study the rational torsion subgroup $\mathcal{T}(\mathfrak{p}^r)$ of the Drinfeld modular Jacobian $J_0(\mathfrak{p}^r)$. We prove that the prime-to-$q(q-1)$ part of $\mathcal{T}(\mathfrak{p}^r)$ is equal to that of the rational cuspidal divisor class group $\mathcal{C}(\mathfrak{p}^r)$ of the Drinfeld modular curve $X_0(\mathfrak{p}^r)$. As we completely computed the structure of $\mathcal{C}(\mathfrak{p}^r)$, it also determines the structure of the prime-to-$q(q-1)$ part of $\mathcal{T}(\mathfrak{p}^r)$.
\end{abstract}

\section{Introduction}
For a positive integer $N$, let $J_0(N)$ be the Jacobian variety of the classical modular curve $X_0(N)$ and $\TT(N):=J_0(N)(\mathbb{Q})_{\text{tors}}$ its rational torsion subgroup. By the Mordell-Weil theorem, $\TT(N)$ is a finite abelian group. Let $\CC(N)$ be the rational cuspidal divisor class group of $X_0(N)$ defined in \cite[p. 280]{YOO_rational_2023}. By a theorem of Manin and Drinfeld, we have $$\CC(N)\subseteq \TT(N).$$ In the early 1970s, for any prime $p$, Ogg \cite[Conjecture 2]{Ogg} conjectured that $\CC(p)=\TT(p)$ and computed that $\CC(p)$ is a cyclic group of order $\frac{p-1}{(p-1, 12)}$. Later in 1977, Mazur \cite[Theorem (1)]{mazur_modular_1977} proved this conjecture by studying the Eisenstein ideal of the Hecke algebra of level $p$. There is a generalized Ogg's conjecture:
\begin{conj} (still open) For any positive integer $N$, $$\CC(N) = \TT(N).$$
\end{conj}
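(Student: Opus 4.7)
The plan is to follow the blueprint of Mazur's argument for $N=p$ and try to push it through for arbitrary $N$. Since $\CC(N)\subseteq\TT(N)$ is already Manin--Drinfeld, the task is the reverse inclusion, and I would attack it prime by prime: for each rational prime $\ell$, show that $\TT(N)[\ell^\infty]\subseteq\CC(N)[\ell^\infty]$.

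For $\ell$ coprime to $6N$ I would use the standard specialization trick. Pick an auxiliary prime $q\nmid \ell N$; then the reduction map $J_0(N)(\mathbb{Q})[\ell^\infty]\hookrightarrow J_0(N)(\mathbb{F}_q)$ is injective, and the Eichler--Shimura congruence $T_q\equiv \text{Frob}_q+q\,\text{Frob}_q^{-1}\pmod{\ell}$ forces any $\mathbb{Q}$-rational $\ell$-primary point to be annihilated by the Eisenstein ideal $I=(T_q-q-1:q\nmid N)$. Hence $\TT(N)[\ell^\infty]$ lies in the Eisenstein part of $J_0(N)[\ell^\infty]$, and the problem reduces to showing that, for every Eisenstein maximal ideal $\m$ of residue characteristic $\ell$, one has $J_0(N)[\m]\subseteq \CC(N)$.

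The Eisenstein kernel itself I would unravel by a combination of Galois-representation input and combinatorics of cusps. The semi-simplification $J_0(N)[\m]^{\mathrm{ss}}\cong\mathbb{F}_\ell\oplus\mathbb{F}_\ell(1)$, together with the restrictions on ramification (unramified outside $\ell N$, locally cyclotomic at $\ell$), should pin down the possible extensions. To match them to cuspidal classes I would use the degeneracy maps $J_0(M)\to J_0(N)$ for $M\mid N$ to isolate each Eisenstein component by divisor of $N$, and then exhibit, for each such component, an explicit degree-zero cuspidal divisor whose class realizes the corresponding extension. This is where the explicit description of $\CC(N)$ (the analogue of what the paper carries out in the Drinfeld prime-power case) becomes essential.

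The main obstacle, and the reason the conjecture remains open, is the primes dividing $6N$. When $\ell\mid N$ the curve $X_0(N)$ has bad reduction at $\ell$, so the specialization argument collapses; one has to work with the N\'eron model and control the $\ell$-part of the component group via Raynaud's theory, then track how cusps specialize into components, which becomes combinatorially severe as $N$ acquires many prime factors or high prime-power parts. When $\ell=2$ or $3$, the Galois-theoretic argument producing Eisenstein-ness can fail because small modular Galois representations can be accidentally reducible and cyclotomic, so one cannot exclude non-Eisenstein rational torsion on general grounds. These are exactly the bad primes that force partial results in the literature, and they are the direct analogue of the $q(q-1)$-part excluded in the present paper's Drinfeld theorem; I would expect any realistic attack to concede those primes and prove the statement only away from $6N$.
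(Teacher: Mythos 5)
The statement you were asked to prove is not a theorem of the paper: it is the generalized Ogg's conjecture, explicitly marked ``(still open)'' in the text, and the paper offers no proof of it. So there is no ``paper's own proof'' to compare against, and any purported proof would be a major breakthrough. Your write-up is in fact not a proof either, and you say so yourself in the final paragraph: you concede the primes dividing $6N$, which is exactly where the conjecture is genuinely open. So you have not proved the statement; you have sketched the known strategy for the prime-to-$6N$ part and explained why it does not extend.

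As a survey of the landscape your sketch is accurate and matches both the classical literature (Mazur, Ohta, Yoo, et al.) and the function-field strategy of this paper: Manin--Drinfeld gives the easy inclusion $\CC(N)\subseteq\TT(N)$; Eichler--Shimura plus reduction at an auxiliary good prime shows rational $\ell$-torsion is Eisenstein for $\ell\nmid 6N$; the Eisenstein kernel is then pinned down via the degeneracy maps $J_0(M)\to J_0(N)$ and matched against explicit cuspidal divisor classes, which requires a full structural description of $\CC(N)$ as in Yoo's theorem (the analogue of Theorem~\ref{The structure of C(p^r) in function field case} here). Your identification of the obstructions at $\ell\mid N$ (bad reduction, component groups via Raynaud, combinatorics of how cusps specialize) and at $\ell\in\{2,3\}$ (failure of the Galois-theoretic forcing of Eisenstein-ness) is the right diagnosis, and it is exactly the analogue of the excluded $q(q-1)$ primes in the Main Theorem. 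But to be clear: none of this closes the gap. The conjecture asks for equality at \emph{all} primes, and no one, including you, has an argument for the bad ones. You should present this as a discussion of partial results and obstructions, not as a proof attempt.
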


In \cite[Theorem 1.7]{YOO_rational_2023}, Yoo et al. completely determined the structure of $\CC(N)$ for arbitrary positive $N$. Consequently, if the above conjecture holds true, then the structure of $\TT(N)$ is determined as well. The most notable current achievement in this regard is as follows:

\begin{thm} [Yoo {\cite[Theorem 1.4]{YOO_torsion_2023}}, Mazur, Lorenzini, Ling, Ohta, Ren, et al.]
Let $N$ be a positive integer, and let $\ell$ be any odd prime whose square does not divide $N$. Suppose that $\ell \geq 5$, it holds that
$$\CC(N)_{\ell} = \TT(N)_{\ell}.$$
Furthermore, for $\ell = 3$, the equality above remains valid under the additional condition that either $N$ is not divisible by $3$, or $N$ has a prime divisor congruent to $-1$ modulo $3$.
\end{thm}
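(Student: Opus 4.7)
The plan is to combine Mazur's Eisenstein ideal technique with a local analysis of the component groups of the Néron model of $J_0(N)$ at primes dividing $N\ell$. Since $\CC(N)_\ell \subseteq \TT(N)_\ell$ holds by Manin-Drinfeld, it suffices to prove the reverse inclusion. The Hecke algebra $\mathbb{T}$ acts on $\TT(N)_\ell$, and this module decomposes over the maximal ideals of $\mathbb{T}$ of residue characteristic $\ell$, so one may treat each localization $\TT(N)_{\mathfrak{m}}$ independently.

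First, I would dispose of all non-Eisenstein maximal ideals $\mathfrak{m}$. For any $\mathfrak{m}$ in the support of a nonzero $\mathbb{Q}$-rational torsion class, the associated semisimple residual Galois representation $\bar\rho_{\mathfrak{m}}$ carries a nonzero $G_\mathbb{Q}$-invariant vector; by Ribet's irreducibility theorem this forces $\bar\rho_{\mathfrak{m}}$ to be reducible, hence $\mathfrak{m}$ to be Eisenstein, i.e., to contain $T_p - (p+1)$ for $p \nmid N$ together with appropriate $U_q$-relations at $q \mid N$. Second, for each Eisenstein $\mathfrak{m}$ above $\ell$, I would compute the image of $\TT(N)_{\mathfrak{m}}$ under the specialization maps into the component groups $\Phi_q$ of the Néron model at primes $q \mid N\ell$. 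The hypothesis $\ell^2 \nmid N$ ensures semistable reduction at $\ell$ when $\ell \mid N$, so that $\Phi_\ell$ is computable from Grothendieck's monodromy pairing on the character group of the dual graph of the special fiber of $X_0(N)$, and the Edixhoven-Ribet-Lorenzini analysis identifies the Eisenstein quotient of each $\Phi_q$ with a piece of $\CC(N)$. Assembling these specializations at all bad primes, together with injectivity of specialization away from the residue characteristic, pins $\TT(N)_{\mathfrak{m}}$ back inside $\CC(N)_{\mathfrak{m}}$.

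The main obstacle, and the reason the theorem becomes conditional at $\ell = 3$, is the small size of $\mathbb{F}_3^\times$: the mod-$3$ cyclotomic character is only quadratic, so Eisenstein residual representations admit many more nontrivial extensions than at $\ell \geq 5$, and the $3$-part of the Shimura subgroup can contribute exotic classes that are hard to distinguish from cuspidal torsion. When $3 \nmid N$, the Eisenstein primes above $3$ are controlled by the cyclotomic tower alone and Mazur's argument adapts directly. When $3 \mid N$ and some prime divisor $q$ of $N$ satisfies $q \equiv -1 \pmod{3}$, the local Eisenstein character at $q$ has order divisible by $3$, providing enough rigidity at $q$ to force the relevant $\mathfrak{m}$-component of $\TT(N)$ to lie in $\CC(N)$. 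In the remaining case, where $3 \mid N$ and every prime divisor of $N$ is congruent to $0$ or $1 \pmod{3}$, none of the currently available rigidity arguments suffice to rule out spurious rational $3$-torsion, and this is precisely the gap reflected in the statement of the theorem.
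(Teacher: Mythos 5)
This statement is not proved in the paper at all; it is Theorem 1.2, quoted directly from Yoo's work on classical modular Jacobians over $\mathbb{Q}$ and attributed to Mazur, Lorenzini, Ling, Ohta, Ren, et al. The paper's own results lie entirely in the function-field (Drinfeld) setting, and the Main Theorem's proof in Section~\ref{section: main thm} concerns $J_0(\p^r)$ over $\mathbb{F}_q(T)$, not $J_0(N)$ over $\mathbb{Q}$. So there is no proof in the paper to compare your sketch against; what you have written is a summary of the external literature the paper cites.

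Taken on its own terms, the overall shape of your sketch is reasonable: reduce to Eisenstein maximal ideals $\mathfrak{m}$ (a nonzero rational $\mathfrak{m}$-torsion class forces, via Boston--Lenstra--Ribet, that $J_0(N)[\mathfrak{m}]$ contain a trivial Galois constituent, hence $\bar\rho_{\mathfrak{m}}$ is reducible and $\mathfrak{m}$ is Eisenstein), then control the Eisenstein part through specialization into component groups $\Phi_q$ at primes of bad reduction, where Edixhoven--Ribet--Lorenzini relate the Eisenstein quotient to cuspidal data. However, there is a concrete error in your $\ell=3$ discussion: you claim that when $q \equiv -1 \pmod 3$, ``the local Eisenstein character at $q$ has order divisible by $3$.'' This is impossible, since a character valued in $\mathbb{F}_3^\times$ has order dividing $|\mathbb{F}_3^\times| = 2$. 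The actual relevance of $q\equiv -1\pmod 3$ is elsewhere: it makes $q+1\equiv 0\pmod 3$, which affects the Eisenstein congruence $T_q\equiv q+1$ and the $3$-divisibility of the relevant factors of $\CC(N)$ at $q$, giving enough nonvanishing in $\Phi_q$ to run the specialization argument. Your sketch also leaves out the genuine technical content of the result --- the explicit computation of $\Phi_q$ and its Eisenstein quotient at higher prime-power levels, the precise injectivity statements for specialization on the $\ell$-primary part, and the assembly across multiple bad primes --- which are the hard parts in Yoo's and Ohta's treatments and cannot be dispatched with a one-line appeal to ``Edixhoven--Ribet--Lorenzini.''
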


\begin{rem}
In \cite[(1.1)]{YOO_torsion_2023}, in a particular case, when $N = p^r$ is a prime power with $r\geq 2$, it has been established that $\CC(p^r)_{\ell} = \TT(p^r)_{\ell}$ holds for any prime $\ell\nmid 2p$. The condition on $\ell$ here is less restrictive compared to the general case.
\end{rem}

In the function field analogue, let $\mathbb{F}_q$ be a finite field of characteristic $p$ with $q$ elements. Let $A = \mathbb{F}_q[T]$ denote the polynomial ring in the indeterminate $T$ over $\mathbb{F}_q$, and $K = \mathbb{F}_q(T)$ represent the rational function field. Define $K_\infty = \mathbb{F}_q((\pi_\infty))$ as the completion of $K$ at the infinite place, and $\mathcal{O}_\infty = \mathbb{F}_q[[\pi_\infty]]$ as its ring of integers, where $\pi_\infty := T^{-1}$. Let $|\cdot| = |\cdot|_\infty$ denote the normalized absolute value on $K_\infty$ with $|T|_\infty := q$. Let $\mathbb{C}_\infty$ denote the completion of an algebraic closure of $K_\infty$. Define $\Omega = \mathbb{C}_\infty - K_\infty$ as the Drinfeld upper half plane. Let $G$ be the group scheme $\GL(2)$ over $\mathbb{F}_q$, with $Z$ denoting the group scheme of scalar matrices in $G$. Consider a nonzero ideal $\n \lhd A$. The level-$\n$ Hecke congruence subgroup of $G(A)$ is defined as
$$\Gamma_0(\n):=\left\{\begin{pmatrix}
    a & b\\ c & d
\end{pmatrix}\in G(A)~\middle|~c\equiv 0~\text{mod}~\n\right\}.$$Let $\Gamma_0(\n)$ act on $\Omega$ via linear fractional transformations. Drinfeld proved in \cite{drinfeld_elliptic_1974} that the quotient $\Gamma_0(\n)\backslash \Omega$ is the space of $\mathbb{C}_\infty$-points of an affine curve $Y_0(\n)$ defined over $K$, representing a moduli space of rank-$2$ Drinfeld modules. The unique smooth projective curve over $K$ containing $Y_0(\n)$ as an open subvariety is denoted by $X_0(\n)$, termed as the Drinfeld modular curve of level $\n$. Let $J_0(\n)$ denote the Jacobian variety of $X_0(\n)$ and $\TT(\n):=J_0(\n)(K)_{\text{tors}}$ its rational torsion subgroup. According to the Lang-N\'eron theorem, $\TT(\n)$ is a finite abelian group. To investigate the structure of $\TT(\n)$, we study the rational cuspidal divisor class group $\CC(\n)$ of $X_0(\n)$, as defined in \cite{ho_rational_2024}. By Gekeler \cite[Theorem 1.2]{gekeler_note_2000}, $\CC(\n)$ is a finite group, leading to the inclusion $$\CC(\n) \subseteq \TT(\n).$$

The structure of $\CC(\n)$ is known in some special cases. First, when $\n = \p \lhd A$ is a prime ideal, the structure of $\CC(\p)$ was fully determined by Gekeler \cite[Corollary 3.23]{gekeler_1997}. Second, Papikian and Wei \cite[Theorem 1.3 and Proposition 1.5]{papikian_rational_2017} determined the prime-to-$(q-1)$ part of $\CC(\n)$ for any square-free ideal $\n\lhd A$. In particular, when $\n = \p\q$ is square-free with two prime ideals $\p$ and $\q$ of $A$, the structure of $\CC(\p\q)$ was entirely determined in \cite[Theorem 1.1]{papikian_eisenstein_2015}. Third, when $\n = \p^r\lhd A$ is a prime power ideal with $r\geq 2$, the structure of $\CC(\p^r)$ was completely determined in the following:
\begin{thm}[{\cite[Theorem 3.5]{ho_rational_2024}}]\label{The structure of C(p^r) in function field case}
Fix a prime ideal $\p\lhd A$ and $r\geq 2$. Then $$\CC(\p^r) \cong \left(\bigoplus_{1\leq i\leq m}\frac{\mathbb{Z}}{|\p|^{r-i}M(\p)\mathbb{Z}}\right)\oplus\left(\bigoplus_{m+1\leq i\leq r-2}\frac{\mathbb{Z}}{|\p|^{i}M(\p)\mathbb{Z}}\right)\oplus \frac{\mathbb{Z}}{M(\p)\mathbb{Z}}\oplus \frac{\mathbb{Z}}{N(\p)\mathbb{Z}},$$ where $m:=\lfloor \frac{r-1}{2}\rfloor$, $M(\p) := \frac{|\p|^2-1}{q^2-1}$, and $$N(\p):=\begin{cases}
\frac{|\p|-1}{q^2-1},  & \text{if $\deg(\p)$ is even.} \\
\frac{|\p|-1}{q-1},  & \text{otherwise.}
\end{cases}$$
Moreover, an explicit basis of $\CC(\p^r)$ is provided in \cite[Theorem 3.5]{ho_rational_2024}.
\end{thm}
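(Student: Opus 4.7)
The plan is to realize $\CC(\p^r)$ as the cokernel of an explicit divisor map from a lattice of modular units on $X_0(\p^r)$ to the group of Galois-stable degree-zero cuspidal divisors, and then to compute that cokernel via Smith normal form. The first step is to enumerate the cusps. Since $\n = \p^r$ is a prime power, the $\bar K$-cusps of $X_0(\p^r)$ fall into $r+1$ Galois orbits $P_0,\ldots,P_r$, where $P_i$ corresponds to cusps ``of type'' $\p^i$; the endpoint orbits $P_0$ and $P_r$ each consist of a single $K$-rational cusp, while the interior orbit $P_i$ is transitively acted on by a quotient of $(A/\p^{\min(i,r-i)})^\ast$. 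The lattice of Galois-stable cuspidal divisors of degree zero therefore has rank exactly $r$, which already matches the number of cyclic summands in the claimed decomposition.

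The second step is to produce enough modular units to generate all cuspidal relations. Following Gekeler, I would use the Drinfeld discriminant $\Delta$ together with its twists $\Delta(d\cdot-)$ for each divisor $d\mid\p^r$; the valuations $\ord_{P_i}\Delta(d\cdot-)$ are computable from the standard product expansion of $\Delta$ combined with the width formulas for cusps of $\Gamma_0(\p^r)$. This yields an integer relations matrix of size roughly $(r+1)\times(r+1)$, whose cokernel (after imposing the degree-zero condition) is $\CC(\p^r)$ by virtue of the Manin--Drinfeld theorem in the Drinfeld setting \cite{gekeler_note_2000}.

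The third step is the linear-algebra calculation. The matrix is highly structured: the Atkin--Lehner involution $w_{\p^r}$ exchanges $P_i$ with $P_{r-i}$, and the family of twists $\Delta(d\cdot-)$ respects this symmetry. Exploiting it, one can block-triangularize the matrix, and I would expect its Smith normal form to produce the two ``mirror image'' sequences of invariant factors $|\p|^{r-i}M(\p)$ for $1\le i\le m$ and $|\p|^i M(\p)$ for $m+1\le i\le r-2$, together with the two boundary factors $M(\p)$ and $N(\p)$ coming from the rational endpoint cusps. The main obstacle will be step two: pinning down the valuations $\ord_{P_i}\Delta(d\cdot-)$ precisely at every interior orbit (especially when $i$ and $r-i$ are close) and verifying that the $\Delta$-units, possibly supplemented by Eisenstein-type units, actually span the full relation lattice. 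The parity of $\deg\p$, which controls the index of $\mathbb{F}_q^\ast$ inside $(A/\p)^\ast$, enters exactly here and accounts for the case split in the formula for $N(\p)$.
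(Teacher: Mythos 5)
This theorem is a black-box citation from \cite[Theorem 3.5]{ho_rational_2024}; the present paper does not reprove it, but Section \ref{Section: The rational cuspidal divisor class group} recalls the machinery used there. Your plan matches that machinery in outline: view $\CC(\p^r)$ as the quotient of the lattice of degree-zero Galois-stable cuspidal divisors (spanned by the orbit sums $(P(\n)_{\p^i})$, $0\le i\le r$) by the sublattice $\mathcal{U}_\n$ of divisors of modular units, parametrize the units via $\Delta$ and its twists $\Delta_{\p^i}$, and compute the resulting Smith normal form. Your count of $r+1$ orbits (hence rank $r$ after the degree-zero cut), your description of the Galois action on each interior orbit, and your remark that the parity of $\deg\p$ controls $N(\p)$ through the index of $\mathbb{F}_q^\times$ in $(A/\p)^\times$ are all correct and consistent with the cited proof.

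Where the plan undersells the difficulty is in pinning down $\mathcal{U}_\n$. The $\Delta_d$ are modular \emph{forms} of weight $q^2-1$, not modular functions; only ratios of them are weight-zero, and --- more seriously --- one must determine exactly which \emph{rational} powers of such ratios are honest elements of $\mathcal{O}(X_0(\n))^\ast$. That is the crux of \cite{ho_rational_2024}: the map $g$ recalled in Section \ref{Section: The rational cuspidal divisor class group} takes values in $\mathcal{O}(X_0(\n))^\ast\otimes_{\mathbb{Z}}\mathbb{Q}$ with an explicit global denominator $(q-1)(|\p|^2-1)|\p|^{r-1}$, and a separate argument (resting on Gekeler's root-of-$\Delta$ results, e.g.\ \cite[Corollary 3.18]{gekeler_1997}, which the present paper invokes in Section \ref{section: main thm}) is needed to decide when a given rational multiple of $g(C)$ actually lies in $\mathcal{O}(X_0(\n))^\ast$. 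Until that sublattice is identified, the ``relations matrix'' you describe is not an integer matrix whose cokernel is automatically $\CC(\p^r)$. You should not need any supplementary ``Eisenstein-type units'' --- the $\Delta_d$ do suffice --- but proving that, together with the precise valuation/denominator bookkeeping, is the real content of the theorem; the Atkin--Lehner symmetry and the Smith normal form part of your outline are sound once this is done.
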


\begin{rem}
When $\n\lhd A$ is square-free or $\deg(\n) = 3$, all the cusps of $X_0(\n)$ are rational by \cite[Proposition 6.7]{gekeler_invariants_2001} and \cite[Lemma 3.1]{papikian_eisenstein_2016}. Thus, the cuspidal divisor groups of $J_0(\n)$ defined in \cite[p. 522]{papikian_rational_2017} and \cite[p. 399]{papikian_eisenstein_2016} in the above cases are the same as the rational cuspidal divisor class groups of $X_0(\n)$ defined in \cite{ho_rational_2024}. However, this phenomenon does not universally hold. For instance, when $\n = (T^2+T+1)^2$ and $q=2$, the cuspidal divisor group of $J_0(\n)$ is strictly larger than the rational cuspidal divisor class group of $X_0(\n)$ by \cite[Example 8.8]{papikian_eisenstein_2016} and Theorem \ref{The structure of C(p^r) in function field case}.
\end{rem}

There is an analogue of the generalized Ogg's conjecture:
\begin{conj}[{\cite[Conjecture 1.2]{ho_rational_2024}}]
For any non-zero ideal $\n\lhd A$, $$\CC(\n) = \TT(\n).$$
\end{conj}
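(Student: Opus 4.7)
The plan is to follow the Mazur-style Eisenstein ideal method that settled Ogg's conjecture for $\TT(p)$, adapted to the Drinfeld setting as in the work of Papikian and Wei. The overarching strategy is to show that every rational torsion class in $J_0(\n)$ is annihilated by the Eisenstein ideal of the Hecke algebra $\mathbb{T}(\n)$; since $\CC(\n)$ is itself Eisenstein, this would match the two Eisenstein kernels and force equality. I would proceed prime-by-prime in $\ell$, and for each residual maximal ideal $\m\subset\mathbb{T}(\n)$ of characteristic $\ell$, aim to show that $J_0(\n)(K)[\m]\neq 0$ forces the Eisenstein congruence $T_\q\equiv |\q|+1\pmod{\m}$ for almost all primes $\q$ of $A$.

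The main technical input is an analysis of $J_0(\n)$ at places of bad reduction. Specializing at a prime $\q\nmid\n$ of good reduction injects the prime-to-$p$ torsion into $J_0(\n)(\mathbb{F}_\q)$, and bounding orders on the Eisenstein quotient side reduces matters to genuinely Eisenstein maximal ideals. For those, one passes to a prime $\mathfrak{l}\mid\n$; for the $\p^r$ case one takes $\mathfrak{l}=\p$, where $X_0(\p^r)$ acquires semistable reduction with a combinatorial special fiber described by Papikian via the quotient graph $\Gamma_0(\p^r)\backslash\Tree$. The component group of the N\'eron model, the cusp specialization map, and the Gekeler--Reversat rigid-analytic uniformization then provide the concrete tools for identifying $\TT(\p^r)_\ell$ with a subgroup of $\CC(\p^r)_\ell$ for primes $\ell$ coprime to $q(q-1)$; the reverse inclusion $\CC(\p^r)\subseteq\TT(\p^r)$ is already supplied by Gekeler's finiteness result stated in the excerpt.

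The main obstacle is exactly the $q$- and $(q-1)$-parts. In residue characteristic $p=\mathrm{char}(\mathbb{F}_q)$, Raynaud's theorem on finite flat group schemes does not give enough control, and wild ramification at $\p$ introduces torsion that cannot be read off from the combinatorial reduction model. The $(q-1)$-part is polluted by the universal $\mathbb{F}_q^\times$-automorphisms of rank-$2$ Drinfeld modules, which are felt both by the structure of the Hecke algebra and by the asymmetry between the full cuspidal divisor group and the $\Gal(\overline{K}/K)$-stable quotient $\CC(\n)$. Removing these contributions is precisely what should yield the prime-to-$q(q-1)$ equality announced in the abstract; the full conjecture, including the $q$- and $(q-1)$-primary parts, appears to require genuinely new ideas and remains the hard open problem.
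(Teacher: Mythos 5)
You correctly recognize that this statement is an open conjecture; the paper itself labels it ``still open'' and only establishes the prime-to-$q(q-1)$ part in the prime-power case (the Main Theorem). However, your sketch of the partial argument contains a geographical confusion that would sink it. You propose to pass to the prime $\p$ dividing $\n=\p^r$ and invoke semistable reduction with special fiber governed by $\Gamma_0(\p^r)\backslash\Tree$. But $\Tree$ is the Bruhat--Tits tree of $\PGL(2,K_\infty)$, so that quotient graph describes the reduction at the \emph{infinite} place, not at $\p$; moreover $X_0(\p^r)$ is not semistable at $\p$ once $r\geq 2$, so the rigid-analytic and component-group machinery you cite is simply not available there. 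The paper's partial result works entirely at $\infty$, where $J_0(\n)$ has split toric reduction, and uses the injection $\TT(\n)_\ell\hookrightarrow\mathcal{E}_{00}(\n,\mathbb{Z}/\ell^k\mathbb{Z})$ coming from the specialization map to $\Phi_\infty$ (\cite[Lemma 7.2]{papikian_eisenstein_2015}), valid precisely for $\ell\nmid q(q-1)$.

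Your outline also omits the structural backbone of the paper's argument. It is an induction on $r$ organized around the degeneracy maps $\alpha,\beta\colon X_0(\p^r)\to X_0(\p^{r-1})$ and the factorization $U_\p=\alpha^\ast\circ\beta_\ast$. This splits the problem into an ``old'' part $U_\p(\TT(\p^r))_\ell$, handled by the inductive hypothesis via $\beta_\ast(\CC(\p^r))_\ell=\CC(\p^{r-1})_\ell=\TT(\p^{r-1})_\ell$, and a ``new'' part $\TT(\p^r)_\ell[U_\p]$, which is pinned down on the cuspidal side by the explicit class $\overline{C'}$ of order $M(\p)|\p|^{r-2}$ and on the harmonic-cochain side by showing $\mathcal{E}(\n,R)[U_\p]=R\cdot E_\n$ for an explicit Eisenstein cochain $E_\n$, via an Atkin--Lehner type rigidity statement (Theorem \ref{Atkin-Lehner type result}). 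Without reducing to the $U_\p$-kernel in this way, a bare Eisenstein-ideal argument cannot close the gap: the target $\mathcal{E}_{00}(\n,R)$ is much larger than $\TT(\n)_\ell$, and one needs the $U_\p$-constraint together with the explicit computation $E_\n(e_j)=q^{j+1}M(\p)$ to cut it down to a single copy of $\mathbb{Z}_\ell/M(\p)\mathbb{Z}_\ell$.
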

This conjecture remains open, and the following are some partial results:
\begin{enumerate}
\item P{\'a}l \cite[Theorem 1.4]{Pal2005} proved that $\CC(\p) = \TT(\p)$ for any prime ideal $\p \lhd A$.
\item Papikian and Wei \cite[Theorem 1.7]{papikian_eisenstein_2016} demonstrated that $\CC(\n) = \TT(\n)$ for an ideal $\n\lhd A$ generated by $T^3$ or $T^2(T-1)$.
\item Papikian and Wei \cite[Theorem 1.3]{papikian_rational_2017} showed that if $\n \lhd A$ is square-free, then $\CC(\n)_{\ell} = \TT(\n)_{\ell}$ for any prime $\ell$ not dividing $q(q-1)$.
\end{enumerate}

In this paper, we prove the following:
\begin{thm}[Main Theorem]\label{Main Theorem}
Let $\n=\p^r\lhd A$ be a prime power ideal with $r\geq 2$. For any prime $\ell\nmid q(q-1)$, we have $$\CC(\p^r)_{\ell} = \TT(\p^r)_{\ell}\cong\left(\frac{\mathbb{Z}_\ell}{M(\p)\mathbb{Z}_\ell}\right)^{r-1}\times\frac{\mathbb{Z}_\ell}{N(\p)\mathbb{Z}_\ell}.$$
\end{thm}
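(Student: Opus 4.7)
The plan is to follow the Eisenstein ideal strategy of Mazur, adapted to Drinfeld modular Jacobians in the manner of P\'al \cite{Pal2005} and Papikian--Wei \cite{papikian_rational_2017}. Because $\CC(\p^r) \subseteq \TT(\p^r)$ is automatic and Theorem \ref{The structure of C(p^r) in function field case} already computes $\CC(\p^r)$, the task reduces to showing the reverse inclusion $\TT(\p^r)_\ell \subseteq \CC(\p^r)_\ell$ for every prime $\ell \nmid q(q-1)$.

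The first step is to exploit the rigid-analytic uniformization of $J_0(\p^r)$ at the place $\infty$, where $X_0(\p^r)$ has totally split multiplicative reduction. Via the Bruhat--Tits tree $\Tree$ of $\PGL_2(K_\infty)$, the character group of the toric part of the N\'eron model of $J_0(\p^r)$ at $\infty$ is identified with the lattice of $\Gamma_0(\p^r)$-invariant harmonic cochains in $\HarZ$, equipped with a faithful action of the Hecke algebra $\mathbb{T}$. By the standard specialization argument (recorded for the square-free case in \cite{papikian_rational_2017}), for $\ell \nmid q$ the group $\TT(\p^r)_\ell$ injects into the component group $\Phi_\infty$ of the N\'eron model at $\infty$, which is a quotient of $\HarZ^{\Gamma_0(\p^r)}$ and is again a $\mathbb{T}$-module.

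The second step is to define the Eisenstein ideal $I \subset \mathbb{T}$, generated by $T_\q - |\q| - 1$ for primes $\q \neq \p$ together with an appropriate element at $\p$, chosen so that the cuspidal cochains (the images of cusps under Gekeler's map) span the $I$-torsion in $\HarZ^{\Gamma_0(\p^r)}$ up to the Atkin--Lehner involution $w_{\p^r}$. One then shows that $\TT(\p^r)_\ell$ is $I$-torsion for $\ell \nmid q(q-1)$; combined with the explicit description of $\CC(\p^r)$ in Theorem \ref{The structure of C(p^r) in function field case} and a direct comparison of orders $\ell$-component by $\ell$-component, this yields the desired equality.

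The main obstacle will be establishing the Eisenstein property $I \cdot \TT(\p^r)_\ell = 0$ for $\ell \nmid q(q-1)$. Unlike the square-free case, the prime-power curve $X_0(\p^r)$ admits a nontrivial old-form decomposition and has more complicated cusps lying above $\p$, so the Hecke action on $\HarZ^{\Gamma_0(\p^r)}$ modulo cuspidal cochains must be disentangled by separating the new and $\p$-old quotients, the latter handled by induction on $r$ using the degeneracy maps $X_0(\p^r) \to X_0(\p^{r-1})$ and the Atkin--Lehner involution. The hypothesis $\ell \nmid q$ enters because specialization at $\infty$ is only injective on prime-to-$q$ torsion, while $\ell \nmid q-1$ is needed both to neutralize the scalar automorphisms $Z(\mathbb{F}_q)$ acting on $\HarZ$ and to ensure that $\CC(\p^r)$ exhausts the full Eisenstein quotient rather than merely a subgroup of index dividing a power of $q-1$.
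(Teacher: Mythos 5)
Your overall framework — Eisenstein ideal, injectivity of specialization into $\Phi_\infty$ for $\ell \nmid q(q-1)$, induction on $r$ via degeneracy maps to $X_0(\p^{r-1})$, and splitting into old and new parts — matches the paper's scaffolding. However, there is a genuine gap at the crux of the argument, and the tool you propose for it is not the one that works.

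The paper does \emph{not} use the Atkin--Lehner involution $W_{\p^r}$ to control the Eisenstein torsion, and it does \emph{not} add a generator at $\p$ to the Eisenstein ideal. In fact a remark in the paper explicitly contrasts the square-free case of Papikian--Wei (which uses the involutions $W_{\m}$) with the prime-power case here, which pivots to the Hecke operator $U_\p$. Concretely, the argument splits $\TT(\p^r)_\ell$ via the exact sequence
$$0\to \TT(\p^r)_\ell[U_\p]\to \TT(\p^r)_\ell\to U_\p(\TT(\p^r))_\ell\to 0,$$
handles $U_\p(\TT(\p^r))_\ell$ by the factorization $U_\p = \alpha^\ast\circ\beta_\ast$ together with Lemma \ref{beta image} and the inductive hypothesis at level $\p^{r-1}$, and then bounds $\TT(\p^r)_\ell[U_\p]$ by $\mathbb{Z}_\ell/M(\p)\mathbb{Z}_\ell$. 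That last bound is where your proposal says only ``a direct comparison of orders,'' and it is exactly the step that requires the paper's new technical input: the explicit Eisenstein cochain $E_\n$ of Equation (\ref{eq: E_n}), with constant Fourier coefficient $0$ and $E_\n^\ast(1)$ a unit in $R$; the Atkin--Lehner \emph{type theorem} (Theorem \ref{Atkin-Lehner type result}) about harmonic cochains whose Fourier coefficients are supported on multiples of $\p$, which has nothing to do with the involution $W_{\p^r}$; the identity $\mathcal{E}(\n,R)[U_\p] = R\cdot E_\n$ (Proposition \ref{H(n,Z_l)}), which uses that an old cochain $h|B_\p$ killed by $U_\p$ satisfies $|\p|\cdot h = 0$, forcing $h=0$ since $|\p|\in R^\ast$; and the evaluation of $E_\n$ on edges $e_j$ approaching the cusp $\begin{bmatrix}1\\\p\end{bmatrix}$, giving $E_\n(e_j) = q^{j+1}M(\p)$ and hence $\mathcal{E}_0(\n,R)[U_\p]\subseteq R[M(\p)]\cdot E_\n$. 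Without something equivalent to this chain, the inclusion $\TT(\p^r)_\ell[U_\p]\subseteq\mathbb{Z}_\ell/M(\p)\mathbb{Z}_\ell$ is unsupported, and your plan to show $\TT(\p^r)_\ell$ is Eisenstein-torsion (in a sense involving an element at $\p$) does not obviously recover it; the paper's Eisenstein ideal $\mathfrak{E}(\n)$ deliberately contains no generator at $\p$, and the $\p$-local control comes entirely from $U_\p$ and the Fourier-theoretic Atkin--Lehner-type theorem. Finally, for the lower bound on the new part, the paper constructs the specific divisor class $\overline{C'}$ in Equation (\ref{eq: C'}) of order $M(\p)|\p|^{r-2}$, annihilated by $U_\p$; your proposal does not mention any such explicit class, which is needed to conclude $\CC(\p^r)_\ell[U_\p]$ is large enough to match.
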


\begin{rem}
Both the square-free case discussed in \cite[Theorem 1.3]{papikian_rational_2017} and the prime power case in the Main Theorem utilize similar approaches in their proofs. However, a notable difference arises in the choice of tools: the former employs the Atkin-Lehner involutions $W_{\m}$ for studying Eisenstein ideals, while the latter relies on the Hecke operator $U_{\p}$ for the same purpose.
\end{rem}

We provide a brief overview of the proof for the Main Theorem. Fix a prime power ideal $\n=\p^r\lhd A$ with $r\geq 2$. The Hecke algebra $\mathbb{T}(\n)$ of level $\n$ is defined as the $\mathbb{Z}$-algebra generated by all Hecke operators $T_{\q}$, where $\q\lhd A$ are primes (including $\q = \p$), acting on the group $\mathcal{H}_0(\n, \mathbb{Z})$ of $\mathbb{Z}$-valued $\Gamma_0(\n)$-invariant cuspidal harmonic cochains on the Bruhat-Tits tree $\Tree$ of $\PGL(2,K_\infty)$. For further details, please refer to \cite[p. 385]{papikian_eisenstein_2016}, Section \ref{Section: Harmonic cochains and Fourier expansion}, and Section \ref{Section: Hecke operators and Eisenstein ideals}. The Eisenstein ideal $\mathfrak{E}(\n)$ of $\mathbb{T}(\n)$ is the ideal generated by the elements $$\{T_\q-|\q|-1\mid \q \text{ is prime}, \q\nmid \n\}.$$ Additionally, $\mathbb{T}(\n)$ naturally acts on the Jacobian $J_0(\n)$ of $X_0(\n)$; see Section \ref{Section: Two degeneracy maps} and \cite[p. 386]{papikian_eisenstein_2016} for more information. For the prime divisor $\p$ of $\n$, we usually write $U_\p := T_\p \in \mathbb{T}(\n)$. Consider the following exact sequences of groups:
\[\begin{tikzcd}
0\arrow{r} &\CC(\p^r)[U_\p]\arrow[hook]{r}\arrow[hook]{d} &\CC(\p^r)\arrow[two heads]{r}{U_{\p}}\arrow[hook]{d} &U_{\p}(\CC(\p^r))\arrow{r}\arrow[hook]{d} &0\\
0\arrow{r} &\TT(\p^r)[U_\p]\arrow[hook]{r} &\TT(\p^r)\arrow[two heads]{r}{U_{\p}} &U_{\p}(\TT(\p^r))\arrow{r} &0,
\end{tikzcd}\]
where $\CC(\p^r)[U_\p]$ and $\TT(\p^r)[U_\p]$ denote the kernels of $U_{\p}$. Considering the $\ell$-primary parts of the groups in the above diagram for a prime $\ell \nmid q(q-1)$, our objective is to demonstrate the following:
\begin{enumerate}
    \item $$U_{\p}(\CC(\p^r))_{\ell} = U_{\p}(\TT(\p^r))_{\ell}\cong\left(\frac{\mathbb{Z}_\ell}{M(\p)\mathbb{Z}_\ell}\right)^{r-2}\times\frac{\mathbb{Z}_\ell}{N(\p)\mathbb{Z}_\ell}.$$
    \item $$\CC(\p^r)_\ell[U_\p] = \TT(\p^r)_\ell[U_\p]\cong\frac{\mathbb{Z}_\ell}{M(\p)\mathbb{Z}_\ell}.$$
\end{enumerate}
Assuming the above and employing the short five lemma, we obtain the Main Theorem. The first statement is achieved through explicit computation of the $U_{\p}$ action on the cusps of $X_0(\p^r)$ combined with induction on $r$. To be specific, we decompose $U_{\p}$ into $U_{\p} = \alpha^{\ast}\circ \beta_{\ast}$, where $\alpha^{\ast}: J_0(\n/\p)\rightarrow J_0(\n)$ and $\beta_{\ast}: J_0(\n)\rightarrow J_0(\n/\p)$ are defined in Section \ref{Section: Two degeneracy maps}. In Lemma \ref{beta image}, we demonstrate that $\beta_{\ast}(\CC(\p^r))_{\ell} = \CC(\p^{r-1})_{\ell}$ for any prime $\ell\neq p$. Assume the induction hypothesis that $\CC(\p^{r-1})_{\ell} = \TT(\p^{r-1})_{\ell}$ for any prime $\ell\nmid q(q-1)$. Note that $\CC(\p) = \TT(\p)$ serves as the initial condition for the induction. Then for any prime $\ell\nmid q(q-1)$, $$U_{\p}(\CC(\p^r))_{\ell} = \alpha^{\ast}(\CC(\p^{r-1}))_{\ell} = \alpha^{\ast}(\TT(\p^{r-1}))_{\ell} \supseteq U_{\p}(\TT(\p^r))_{\ell}.$$ Now, the result follows from the natural inclusion $U_{\p}(\CC(\p^r))_{\ell} \subseteq U_{\p}(\TT(\p^r))_{\ell}$.

The second statement is established through an Atkin-Lehner type result; see Theorem \ref{Atkin-Lehner type result}. To be precise, for a coefficient ring $R$ (see Definition \ref{Def: coefficient ring}), let $\mathcal{E}(\n, R)[U_{\p}]$ represent the group of $R$-valued $\Gamma_0(\n)$-invariant harmonic cochains on $\Tree$ that are annihilated by both the actions of the Eisenstein ideal $\mathfrak{E}(\n)$ and the Hecke operator $U_{\p}$. We construct an explicit element $E_{\n}$ in $\mathcal{E}(\n, R)[U_{\p}]$; cf. Equation (\ref{eq: E_n}) and Lemma \ref{E_n is annihilated by E(n) and U_p}. Utilizing Theorem \ref{Atkin-Lehner type result}, we demonstrate that $E_{\n}$ generates $\mathcal{E}(\n, R)[U_{\p}]$ over $R$. Considering the values of $E_{\n}$ on specific edges of $\Tree$, we observe that the subgroup $\mathcal{E}_0(\n, R)[U_{\p}]$ of $\mathcal{E}(\n, R)[U_{\p}]$, consisting of cuspidal harmonic cochains, is contained in $R\left[M(\p)\right]\cdot E_{\n}$, where $R\left[M(\p)\right]:=\{c\in R\mid M(\p)\cdot c = 0\}$. Let $R := \mathbb{Z}/\ell^k\mathbb{Z}$ for a prime $\ell\nmid q(q-1)$ and a sufficiently large $k$. By \cite[Lemma 7.2]{papikian_eisenstein_2015}, there exists an injective map: 
\begin{equation} \label{eq: injection map}
\CC(\p^r)_\ell[U_\p]\subseteq\TT(\p^r)_\ell[U_\p]\hookrightarrow 
\mathcal{E}_{0}(\p^r, R)[U_{\p}] \subseteq R\left[M(\p)\right]\cdot E_{\n}\hookrightarrow \frac{\mathbb{Z}_\ell}{M(\p)\mathbb{Z}_\ell}.
\end{equation}
Moreover, according to Theorem \ref{The structure of C(p^r) in function field case} and Section \ref{Section: Two degeneracy maps}, we construct explicitly a cyclic subgroup of $\CC(\n)_\ell[U_\p]$ that is isomorphic to $\mathbb{Z}_\ell/M(\p)\mathbb{Z}_\ell$; see Equation (\ref{eq: C'}), Sequence (\ref{eq: main sequence}), and Proposition \ref{C(n)_l=T(n)_l}. Hence, we demonstrate that $\CC(\p^r)_\ell[U_\p] = \TT(\p^r)_\ell[U_\p]$.

\begin{rem}
The idea of considering Map (\ref{the canonical specialization map}) and Sequence (\ref{eq: injection map}) comes from the proof of $\CC(\p) = \TT(\p)$ for the prime level case in \cite[Theorem 1.4]{Pal2005}. More precisely, P{\'a}l showed that the image of the $\ell$-primary part of $\TT(\p)$, for a prime $\ell\neq p$, in the group $\Phi_{\infty}$ of connected components of the N{\'e}ron model of $J_0(\p)$ at $\infty$ with respect to specialization, injects into $\mathcal{E}_0(\p, \mathbb{Z}/\ell^k\mathbb{Z})$ for large enough $k$; cf. \cite[p. 133]{Pal2005}, which leads to \cite[Theorem 7.19]{Pal2005}.
\end{rem}

\begin{rem}
The primes $\ell$ dividing $q(q-1)$ lie beyond the scope of our current approach. Considering a non-zero ideal $\n\lhd A$, $J_0(\n)$ has bad reduction both at $\infty$ and at the primes dividing $\n$. In particular, $J_0(\n)$ has split toric reduction at $\infty$. In instances where $\ell$ divides $q(q-1)$, the specialization map $\TT(\n)_{\ell} \rightarrow (\Phi_{\infty})_{\ell}$ might not ensure injectivity; cf. \cite[Lemma 7.2]{papikian_eisenstein_2015}.
\end{rem}

The paper is outlined in the following. We begin in Section \ref{Section: The rational cuspidal divisor class group} with an introduction to the rational cuspidal divisor class group of $X_0(\n)$. Section \ref{Section: Harmonic cochains and Fourier expansion} covers harmonic cochains on the Bruhat-Tits tree $\Tree$ of $\PGL(2,K_\infty)$ and their Fourier expansions. Following this, Section \ref{Section: Hecke operators and Eisenstein ideals} discusses Hecke operators acting on the space of $R$-valued harmonic cochains on $\Tree$, where $R$ is a commutative ring with unity, alongside Eisenstein ideals for prime power levels. Section \ref{Section: Two degeneracy maps} explores the action of Hecke operators on the Jacobian $J_0(\n)$ of the Drifeld modular curve $X_0(\n)$. Finally, all introduced tools are integrated into the proof of the Main Theorem.

\section{The rational cuspidal divisor class group of $X_0(\n)$}\label{Section: The rational cuspidal divisor class group}
In this section, we recall from \cite{ho_rational_2024}. Let $\n\lhd A$ be a non-zero ideal. We also denote by $\n$ the monic polynomial in $A$ generating the ideal. The rational cuspidal divisor class group $\CC(\n)$ of $X_0(\n)$ is the subgroup of $J_0(\n)$ generated by the linear equivalence classes of the degree $0$ rational cuspidal divisors on $X_0(\n)$. By Gekeler \cite[(3.6)]{gekeler_1997}, the cusps of $X_0(\n)$ are in bijection with $\Gamma_0(\n)\backslash \mathbb{P}^1(K)$. Moreover, every cusp of $X_0(\n)$ has a representative $\begin{bmatrix} a \\ d \end{bmatrix}$ in $\Gamma_0(\n)\backslash \mathbb{P}^1(K)$, where $a,d\in A$ are monic, $d|\n$, and $\gcd(a,\n)=1$. Two representatives $\begin{bmatrix} a \\ d \end{bmatrix}$ and $\begin{bmatrix} a' \\ d' \end{bmatrix}$ represents the same cusp of $X_0(\n)$ if and only if $d=d'$ and $\alpha a' \equiv a\mod \gcd(d, \n/d)$ for some $\alpha \in \mathbb{F}_q^{\times}$ by \cite[Lemma 3.1 (i)]{papikian_eisenstein_2016}. Such a cusp is called of height $d$, independent of the choice of $a$. The cusps of $X_0(\n)$ of the same height are conjugate over $K$ by \cite[Section 6]{gekeler_invariants_2001} and \cite[Lemma 3.1 (iii)]{papikian_eisenstein_2016}. Denote by $[0]$ and $[\infty]$ the unique cusp of $X_0(\n)$ of height $1$ and $\n$, respectively.

In the following, we fix a prime power ideal $\n=\p^r\lhd A$ with $r\geq 1$. For a monic $d\mid \n$, denote by $(P(\n)_d)$ the sum of all the cusps of $X_0(\n)$ of height $d$, which is a rational cuspidal divisor on $X_0(\n)$; cf. \cite[Section 1.3]{ho_rational_2024}. Let $$\Div_{\cusp}^0(X_0(\n))(K):=\left\{C = \sum_{\substack{d|\n\\\text{monic}}}a_d\cdot (P(\n)_d)\bigm\vert \deg(C)=0,~a_d\in \mathbb{Z}\right\}$$ be the group of the degree $0$ rational cuspidal divisors on $X_0(\n)$. Recall that a modular function on $X_0(\n)$ is a meromorphic function on $\Omega \cup\ \mathbb{P}^1(K)$ which is invariant under the action of $\Gamma_0(\n)$. Moreover, a modular unit on $X_0(\n)$ is a modular function on $X_0(\n)$ that does not have zeros or poles on $\Omega$. Let $\mathcal{U}_\n$ be the subgroup of $\Div_{\cusp}^0(X_0(\n))(K)$ consisting of the divisors of modular units. Let $C_i := (P(\n)_{\p^i})-\deg(P(\n)_{\p^i})\cdot[\infty]\in \Div_{\cusp}^0(X_0(\n))(K)$, where $0\leq i \leq r-1$. Then $$\CC(\n):=\Div_{\cusp}^0(X_0(\n))(K)/\mathcal{U}_\n = \langle \overline{C_0},\overline{C_1},\cdots,\overline{C_{r-1}}\rangle.$$

Let $\Delta(z)$ be the Drinfeld discriminant function defined in \cite[p. 183]{gekeler_1997}. For $d|\n$, consider $\Delta_d(z):= \Delta(d z)$, which is a modular form on $\Omega$ of weight $q^2-1$ and type $0$ for $\Gamma_0(\n)$; cf. \cite[(1.2)]{gekeler_1997}. The zero orders of $\Delta_d(z)$ at the cusps of $X_0(\n)$ are defined in \cite[p. 47]{gekeler_drinfeld_1986}. Using \cite[Equations (3.10) and (3.11)]{gekeler_1997}, we found the divisor of $\Delta_d(z)$ on $X_0(\n)$ in \cite[Section 2.1]{ho_rational_2024}. Let $\mathcal{O}(X_0(\n))^\ast$ be the (multiplicative) group of modular units on $X_0(\n)$. Recall the following group homomorphism in \cite[Section 2.1]{ho_rational_2024}:
\[\begin{tikzcd}
g: &\Div_{\cusp}^0(X_0(\n))(K) \arrow[r] & \displaystyle\mathcal{O}(X_0(\n))^\ast\tens_{\mathbb{Z}}\mathbb{Q}\\
&C=\displaystyle\sum_{\substack{d|\n\\\text{monic}}}a_d\cdot(P(\n)_d) \arrow[r,mapsto] & \left(\displaystyle\prod_{\substack{d|\n\\\text{monic}}}\Delta_d^{r_d}\right)\otimes \frac{1}{(q-1)(|\p|^2-1)|\p|^{r-1}},
\end{tikzcd}\]
where $r_d\in \mathbb{Z}$ are defined by
$$\begin{bmatrix}
r_1\\
r_\p\\
\vdots\\
r_{\p^r}
\end{bmatrix} := (q-1)(|\p|^2-1)|\p|^{r-1}\cdot\Lambda(\n)^{-1}\cdot\begin{bmatrix}
a_1\\
a_\p\\
\vdots\\
a_{\p^r}
\end{bmatrix}.$$
Here, $\Lambda(\n)^{-1}\in \GL_{r+1}(\mathbb{Q})$ is a certain matrix defined in \cite[Section 2.1]{ho_rational_2024}. The map $g$ provides a connection between the rational cuspidal divisor class group $\CC(\n)$ and the modular units on $X_0(\n)$. More precisely, if $g(C) = f\otimes a$ for some $C\in \Div_{\cusp}^0(X_0(\n))(K)$, $f\in \mathcal{O}(X_0(\n))^\ast$, and $a\in \mathbb{Q}$, then we have $C = a\cdot \divisor(f)$.

\section{Harmonic cochains and Fourier expansions}\label{Section: Harmonic cochains and Fourier expansion}
Let $\Tree$ be the Bruhat-Tits tree of $\PGL(2,K_\infty)$; cf. \cite[(1.3)]{GekelerREVERSAT}. Recall that $G$ is the group scheme $\GL(2)$ over $\mathbb{F}_q$, with $Z$ the group scheme of scalar matrices in $G$. Let $\mathcal{K} = G(\mathcal{O}_\infty)$ with its Iwahori subgroup $\mathcal{I}$ defined by $$\mathcal{I} = \left\{\begin{pmatrix}
a & b\\
c & d
\end{pmatrix}
\in \mathcal{K}\bigm\vert c\equiv 0 \bmod \pi_\infty\right\}.$$ Let $V(\Tree) = G(K_\infty)/\mathcal{K}\cdot Z(K_\infty)$ and $E(\Tree) = G(K_\infty)/\mathcal{I}\cdot Z(K_\infty)$ be the sets of the vertices and the oriented edges of $\Tree$, respectively; cf. \cite[(1.3)]{GekelerREVERSAT}. The group $G(K_\infty)$ acts on $E(\Tree)$ by left multiplication. For a function $f$ on $E(\Tree)$ and $\gamma\in G(K_\infty)$, we define a function $f|\gamma$ on $E(\Tree)$ by $(f|\gamma)(e) := f(\gamma e)$. 
\begin{defn} (van der Put)
Let $R$ be a commutative ring with unity. An $R$-valued harmonic cochain on $\Tree$ is a function $f: E(\Tree)\rightarrow R$ that satisfies
\begin{enumerate}
    \item (alternating) $$f(e)+f(\overline{e})=0\text{ for all }e\in E(\Tree).$$
    \item (harmonic) $$\sum_{\substack{e\in E(\Tree)\\t(e)=v}}f(e)=0\text{ for all }v\in V(\Tree).$$
\end{enumerate}
Here, for $e\in E(\Tree)$, $t(e)$ is its terminus and $\overline{e}$ is its inversely oriented edge.
\end{defn}
We introduce some notation. Fix a non-zero ideal $\n\lhd A$. Denote by $\mathcal{H}(\Tree, R)$ the group of $R$-valued harmonic cochains on $\Tree$ and by $\mathcal{H}(\n, R):=\mathcal{H}(\Tree, R)^{\Gamma_0(\n)}$ its subgroup of $\Gamma_0(\n)$-invariant harmonic cochains, i.e., $f|\gamma = f$ for all $\gamma\in \Gamma_0(\n)$. For $f\in \mathcal{H}(\n, R)$, it naturally defines a function $f'$ on the quotient graph $\Gamma_0(\n)\backslash \Tree$. If $f'$ has compact support, then we call $f$ a cuspidal harmonic cochain for $\Gamma_0(\n)$. Let $\mathcal{H}_0(\n, R)$ be the subgroup of $\mathcal{H}(\n, R)$ consisting of cuspidal harmonic cochains for $\Gamma_0(\n)$, and denote by $\mathcal{H}_{00}(\n, R)$ the image of $\mathcal{H}_0(\n, \mathbb{Z})\otimes R$ in $\mathcal{H}_0(\n, R)$. Note that if $R$ is flat over $\mathbb{Z}$, then $\mathcal{H}_{0}(\n, R) = \mathcal{H}_{00}(\n, R)$.
\begin{defn}[P{\'a}l {\cite[Section 2]{Pal2005}}] \label{Def: coefficient ring}
A ring $R$ is a coefficient ring if $1/p\in R$ and $R$ is a quotient of a discrete valuation ring $\widetilde{R}$ which contains $p$-th roots of unity.
\end{defn}
In the following, we fix $R = \mathbb{C}$ or a coefficient ring. Let $f\in \mathcal{H}(\Tree, R)$ which is invariant under the action of $\Gamma_\infty:=\left\{\begin{pmatrix}
    a & b\\ 0 & d
\end{pmatrix}\in G(A)\right\}$. The constant Fourier coefficient of $f$ is the $R$-valued function $f^0$ on $\pi_{\infty}^\mathbb{Z}$ defined by
$$
f^0(\pi_{\infty}^k)=\begin{cases}
q^{1-k}\sum_{u\in (\pi_{\infty})/(\pi_{\infty}^k)}f\left(\begin{pmatrix}
\pi_{\infty}^k & u\\ 0 & 1
\end{pmatrix}\right) &\text{if $k\geq 1$};\\
f\left(\begin{pmatrix}
\pi_{\infty}^k & 0\\ 0 & 1
\end{pmatrix}\right) &\text{if $k\leq 1$}.
\end{cases}
$$
Let $\eta: K_\infty\rightarrow\mathbb{C}^{\times}$ be the character $$\eta: \sum a_i\pi_\infty^i\mapsto\eta_0(\Tr(a_1)),$$ where $\Tr: \mathbb{F}_q\rightarrow\mathbb{F}_p$ is the trace and $\eta_0$ is any non-trivial character of $\mathbb{F}_p$. For a divisor $\m$ on $K$, the $\m$-th Fourier coefficient $f^{\ast}(\m)$ of $f$ is $$f^{\ast}(\m) = q^{-1-\deg(\m)}\sum_{u\in(\pi_{\infty})/(\pi_{\infty}^{2+\deg(\m)})}f\left(\begin{pmatrix}
\pi_{\infty}^{2+\deg(\m)} & u\\ 0 & 1
\end{pmatrix}\right)\eta(-m u),$$ if $\m$ is non-negative; $f^{\ast}(\m)=0$, otherwise. Here, $m\in A$ is the monic polynomial such that $\m=\divisor(m)\cdot \infty^{\deg(\m)}$.
\begin{thm}[Gekeler {\cite[(2.8)]{gekeler_improper_1995}}] \label{Thm: Fourier expansion}
Let $f\in \mathcal{H}(\Tree, R)$ which is invariant under $\Gamma_\infty$. Then
$$f\left(\begin{pmatrix}
    \pi_\infty^k & y\\ 0 & 1
\end{pmatrix}\right) = f^0(\pi_\infty^k)+\sum_{\substack{0\neq m \in A\\ \deg(m)\leq k-2}}f^{\ast}(\divisor(m)\cdot \infty^{k-2})\cdot \eta(my).$$ In particular, $f$ is uniquely determined by the Fourier coefficients $f^0$ and $f^{\ast}$.
\end{thm}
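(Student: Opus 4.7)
The plan is to view the statement as elementary Fourier analysis on a finite abelian group, relying only on the $\Gamma_\infty$-invariance of $f$. Writing $g_k(y) := \begin{pmatrix} \pi_\infty^k & y \\ 0 & 1 \end{pmatrix}$, the first step is to show that the value $f(g_k(y))$ depends on $y$ only through its class in the quotient $K_\infty / (A + \pi_\infty^k \mathcal{O}_\infty)$. The stabilizer computation $g_k(y)^{-1} g_k(y') = \begin{pmatrix} 1 & \pi_\infty^{-k}(y'-y) \\ 0 & 1 \end{pmatrix}$ shows that $g_k(y)$ and $g_k(y')$ represent the same edge exactly when $y' - y \in \pi_\infty^k \mathcal{O}_\infty$, while $\Gamma_\infty$-invariance under $\begin{pmatrix} 1 & a \\ 0 & 1 \end{pmatrix}$ with $a \in A$ yields $A$-periodicity. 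Using the splitting $K_\infty = A \oplus \pi_\infty \mathcal{O}_\infty$, this quotient is trivial for $k \leq 1$ and, for $k \geq 2$, identifies with the finite abelian group $G_k := \pi_\infty \mathcal{O}_\infty / \pi_\infty^k \mathcal{O}_\infty$ of order $q^{k-1}$. For $k \leq 1$ the sum in the theorem is empty and the formula collapses to $f(g_k(y)) = f(g_k(0)) = f^0(\pi_\infty^k)$, matching the definition of $f^0$, so only the case $k \geq 2$ requires real content.

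For $k \geq 2$, the next step is to identify the Pontryagin dual $\widehat{G_k}$ via the bilinear pairing $(y, m) \mapsto \eta(my)$ inherited from $\eta \colon K_\infty \to \mathbb{C}^\times$. Since $\eta$ depends only on the coefficient of $\pi_\infty$ and polynomials in $T = \pi_\infty^{-1}$ have no such coefficient, $\eta$ kills $A$ (and also kills $\pi_\infty^2 \mathcal{O}_\infty$). Hence $\chi_m(y) := \eta(my)$ with $m \in A$ descends to a character of $K_\infty / A$, and the requirement that $\chi_m$ kill $\pi_\infty^k \mathcal{O}_\infty$ forces $\deg m \leq k - 2$; a count of $q^{k-1}$ such $m$'s against $|G_k| = q^{k-1}$ shows these characters exhaust $\widehat{G_k}$. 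Fourier inversion on $G_k$ then gives
\[
f(g_k(y)) = \frac{1}{|G_k|} \sum_{\substack{m \in A \\ \deg m \leq k-2}} \eta(my) \sum_{u \in G_k} f(g_k(u))\, \eta(-mu).
\]

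The final step is to match the Fourier coefficients with $f^0$ and $f^\ast$. The $m = 0$ term contributes $q^{1-k} \sum_{u \in G_k} f(g_k(u)) = f^0(\pi_\infty^k)$ by the definition for $k \geq 1$, and for nonzero $m$ the inner sum together with the normalizing constant $q^{1-k} = q^{-1-(k-2)}$ is precisely $f^\ast(\divisor(m) \cdot \infty^{k-2})$ under the substitution $\deg \m = k - 2$. Reassembling gives the claimed expansion, and the uniqueness clause follows because $f^0$ and $f^\ast$ determine $f$ on every edge of the form $g_k(y)$; combined with alternation and the harmonicity relations one then recovers $f$ on all of $E(\Tree)$ via the Iwasawa-type description of edges up to $\Gamma_\infty$-action.

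The main obstacle I anticipate is the clean identification of $\widehat{G_k}$ with the set of polynomials of degree at most $k - 2$: verifying both the surjectivity of $m \mapsto \chi_m$ and the sharpness of the degree bound requires simultaneously handling the self-duality of $K_\infty$ furnished by $\eta$ and the direct-sum decomposition $K_\infty = A \oplus \pi_\infty \mathcal{O}_\infty$. Once this Pontryagin-duality bookkeeping is in place, the remainder of the proof is routine Fourier inversion on a finite abelian group.
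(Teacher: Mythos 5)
The paper does not prove this theorem itself (it is imported from Gekeler's paper), so I am judging your argument on its own merits. The overall strategy — reduce to Fourier inversion on the finite group $G_k = \pi_\infty\mathcal{O}_\infty/\pi_\infty^k\mathcal{O}_\infty$, identify $\widehat{G_k}$ with polynomials of degree $\le k-2$ via $\eta$, and match the resulting coefficients with $f^0$ and $f^\ast$ — is the right one, and most of the bookkeeping (the stabilizer computation, the splitting $K_\infty = A \oplus \pi_\infty\mathcal{O}_\infty$, the degree bound $\deg m \le k-2$, the counting argument for surjectivity, the normalization $q^{1-k} = q^{-1-(k-2)}$, and the final reduction to edges of Iwasawa form) is handled correctly.

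There is, however, one genuine gap in the matching step. Your Fourier inversion produces, as the coefficient of $\eta(my)$, the quantity $q^{1-k}\sum_{u\in G_k} f(g_k(u))\,\eta(-mu)$, \emph{for the specific} $m$ in the sum. But the theorem's coefficient $f^{\ast}(\divisor(m)\cdot\infty^{k-2})$ is, by the paper's definition, computed using the \emph{monic} representative $m_0$ of the divisor class, i.e., $q^{1-k}\sum_{u} f(g_k(u))\,\eta(-m_0 u)$. When $m = \lambda m_0$ with $\lambda\in\mathbb{F}_q^\times$ and $\lambda\neq 1$, these two expressions are not obviously equal, and you have only invoked the unipotent part of $\Gamma_\infty$ (for $A$-periodicity), which does not supply the needed identity. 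To close the gap one must also use the diagonal part of $\Gamma_\infty$: for $a,d\in\mathbb{F}_q^\times$, one has
\[
\begin{pmatrix} a & 0\\ 0 & d\end{pmatrix}\begin{pmatrix} \pi_\infty^k & y\\ 0 & 1\end{pmatrix}
= d\begin{pmatrix} \pi_\infty^k & (a/d)y\\ 0 & 1\end{pmatrix}\begin{pmatrix} a/d & 0\\ 0 & 1\end{pmatrix},
\]
and since the rightmost factor lies in $\mathcal{I}$ and the scalar $d$ lies in $Z(K_\infty)$, $\Gamma_\infty$-invariance gives $f(g_k(\lambda y)) = f(g_k(y))$ for all $\lambda\in\mathbb{F}_q^\times$. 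Substituting $u\mapsto\lambda^{-1}u$ then shows $\sum_u f(g_k(u))\eta(-mu) = \sum_u f(g_k(u))\eta(-m_0 u)$, which is exactly what your matching step tacitly assumes. Adding this one sentence makes the argument complete; without it the identification with $f^\ast$ is unjustified for the $q-2$ out of every $q-1$ nonzero $m$ that are not monic.

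As a secondary remark (not a gap, but worth a word): over a coefficient ring $R$ rather than $\mathbb{C}$, the Fourier inversion formula $\frac{1}{|G_k|}\sum_\chi \cdots$ requires $q\in R^\times$ and requires $\eta$ to take values in $R$ (or a ring mapping to $R$); this is exactly what the coefficient-ring hypothesis guarantees (via the discrete valuation ring $\widetilde R$ containing $p$-th roots of unity and $1/p\in R$). You implicitly work over $\mathbb{C}$; if you want the statement for general $R$ as in the paper's setup you should note that the argument is carried out over $\widetilde R$ and then pushed to $R$.
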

Recall the following theorem:
\begin{thm}[van der Put {\cite[Proposition (1.1)]{VanderPut1981-1982}}] \label{thm: van der Put map}
Let $\mathcal{O}(\Omega)^\ast$ be the group of non-vanishing holomorphic rigid-analytic functions on $\Omega$. There is a canonical exact sequence of $G(K_\infty)$-modules $$0\rightarrow\mathbb{C}_\infty^\ast\rightarrow\mathcal{O}(\Omega)^\ast\overset{\widetilde{r}}{\rightarrow} \HarZ \rightarrow 0.$$
\end{thm}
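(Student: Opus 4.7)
The plan is to construct $\widetilde{r}$ geometrically from the reduction map $\lambda : \Omega \to \Tree$, and then verify exactness directly using the rigid-analytic structure of $\Omega$. First, I would recall that $\Omega$ admits an admissible affinoid cover indexed by the vertices of $\Tree$: to each $v \in V(\Tree)$ is attached an affinoid $\Omega_v \subset \Omega$, and to each edge $e \in E(\Tree)$ an open annulus $\Omega_e$ sitting between the affinoids at its endpoints. Given $f \in \mathcal{O}(\Omega)^\ast$, the restriction $f|_{\Omega_e}$ is a non-vanishing holomorphic function on an annulus, so its absolute value is log-linear in the canonical coordinate; I define $\widetilde{r}(f)(e)$ to be the integer slope in the direction of $t(e)$. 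This clearly gives a group homomorphism, and $G(K_\infty)$-equivariance is inherited from the equivariance of the reduction map.

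Next, I would verify that $\widetilde{r}(f) \in \HarZ$. The alternating property $\widetilde{r}(f)(\overline{e}) = -\widetilde{r}(f)(e)$ is immediate from reversing the orientation of the annulus. For harmonicity at a vertex $v$, the sum $\sum_{t(e)=v}\widetilde{r}(f)(e)$ equals, by a local degree computation on the affinoid $\Omega_v$, the number of zeros of $f$ on $\Omega_v$ minus the number of poles, counted with multiplicity. Both vanish because $f \in \mathcal{O}(\Omega)^\ast$ has no zeros or poles on $\Omega \supseteq \Omega_v$.

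For exactness at $\mathcal{O}(\Omega)^\ast$, if $\widetilde{r}(f) = 0$, then $|f|$ is constant on every annulus $\Omega_e$; combined with the maximum modulus principle on each affinoid $\Omega_v$, this forces $|f|$ to be constant on all of $\Omega$. A non-vanishing rigid-analytic function on the connected space $\Omega$ with constant absolute value must itself be constant, so $f \in \mathbb{C}_\infty^\ast$. The reverse inclusion $\mathbb{C}_\infty^\ast \subseteq \ker \widetilde{r}$ is immediate.

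The main obstacle, and the technical heart of the argument, is surjectivity of $\widetilde{r}$. My strategy is to first exhibit explicit building blocks: for distinct $s_1, s_2 \in \mathbb{P}^1(K_\infty)$, the function $(z - s_1)/(z - s_2)$ (interpreted as $z - s_1$ when $s_2 = \infty$, etc.) belongs to $\mathcal{O}(\Omega)^\ast$, and one checks by direct computation that $\widetilde{r}$ sends it to the characteristic cochain of the bi-infinite geodesic in $\Tree$ joining the ends determined by $s_1$ and $s_2$. Every finitely supported element of $\HarZ$ is then a finite $\mathbb{Z}$-linear combination of such geodesic cochains, giving surjectivity onto the finitely supported subgroup. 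For a general cochain $c \in \HarZ$, one writes $c$ as a limit of finitely supported cochains and shows that the corresponding infinite product of linear fractions (a theta-type product) converges on every $\Omega_v$ to an element of $\mathcal{O}(\Omega)^\ast$ with the prescribed slopes; convergence rests on the non-archimedean geometry of $\mathbb{C}_\infty$ together with the harmonic condition, which forces all but finitely many factors to lie arbitrarily close to $1$ on any fixed $\Omega_v$. Managing this convergence carefully—and checking that the resulting rigid function is globally holomorphic and non-vanishing, not merely an analytic object on each $\Omega_v$—is the delicate step I would expect to require the most effort.
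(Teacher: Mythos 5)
The paper does not prove this theorem: it is cited verbatim from van der Put's 1981--82 work, and the paper's only additional content is the observation that $\widetilde{r}$ extends by tensoring with $\mathbb{Q}$. So there is no ``paper's proof'' to compare against. Your proposal is therefore judged as a blind reconstruction of the standard argument, and for the most part it tracks the accepted line of proof (reduction map $\lambda:\Omega\to\Tree$, affinoid cover indexed by vertices with connecting annuli over edges, slope of $\log|f|$ on each annulus, degree argument on the reduction of $\Omega_v$ for harmonicity, maximum modulus plus connectedness for injectivity modulo $\mathbb{C}_\infty^\ast$, geodesic characteristic cochains $\widetilde{r}\bigl((z-s_1)/(z-s_2)\bigr)$ as building blocks). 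These parts are correct in outline.

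There is, however, a genuine conceptual error in the surjectivity step. You write that ``every finitely supported element of $\HarZ$ is then a finite $\mathbb{Z}$-linear combination of such geodesic cochains, giving surjectivity onto the finitely supported subgroup,'' and then approximate a general $c\in\HarZ$ by finitely supported cochains. But the finitely supported subgroup of $\HarZ$ is the zero group: if $c$ were a nonzero finitely supported harmonic cochain, pick an edge $e$ in its support oriented away from a fixed base vertex and at maximal distance from it; harmonicity at $t(e)$ then forces $c(e)=0$, a contradiction. (Consistent with this, the geodesic cochains you construct are supported on entire bi-infinite geodesics and are not finitely supported.) So the intermediate ``finitely supported'' target of your two-step strategy is vacuous, and the subsequent ``limit of finitely supported cochains'' is not available inside $\HarZ$. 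The correct version replaces ``finitely supported harmonic cochain'' with something like ``$\mathbb{Z}$-linear combination of finitely many geodesic cochains'' (a nontrivial subgroup, realized by finite products of the fractional linear functions), and then argues that an arbitrary $c\in\HarZ$ is a \emph{locally finite} limit of such combinations, with the corresponding theta-type infinite product converging on each affinoid $\Omega_v$ because, on a bounded region, all but finitely many factors are $1$-units. With that substitution the rest of your sketch stands, but as written the surjectivity argument starts from an empty set of building blocks.
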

The van der Put map $\widetilde{r}$ extends naturally to the map $$\widetilde{r}:\mathcal{O}(X_0(\n))^\ast\tens_{\mathbb{Z}}\mathbb{Q}\hookrightarrow\mathcal{O}(\Omega)^\ast\tens_{\mathbb{Z}}\mathbb{Q}\overset{\widetilde{r}\otimes 1}{\rightarrow}\HarZ\tens_{\mathbb{Z}}\mathbb{Q}\hookrightarrow\HarQ.$$

\begin{Ex}
Recall that $\Delta\in \mathcal{O}(\Omega)^\ast$ is the Drinfeld discriminant function. Consider the function $\widetilde{r}(\Delta)\in \mathcal{H}(\Tree, \mathbb{Z})\subset \mathcal{H}(\Tree, \mathbb{C})$, which is invariant under $\Gamma_\infty$. By \cite[(2.5) and Corollary 2.8]{gekeler_1997}, we have
\begin{equation}\label{eq: Fourier coeff. of Delta}
\begin{cases}
\widetilde{r}(\Delta)^0(\pi_\infty^k) = -(q-1)q^{1-k};\\
\widetilde{r}(\Delta)^{\ast}(1) = (q^2-1)(q-1)/q.
\end{cases}
\end{equation}
\end{Ex}

Recall the following lemma:
\begin{lem}[{\cite[Lemma 2.14]{papikian_rational_2017}}]
Given a non-zero ideal $\m\lhd A$ and $f\in \mathcal{H}(\Tree, R)$ which is invariant under $\Gamma_\infty$, define $$f|B_{\m}:=f|\begin{pmatrix}
\m & 0\\ 0 & 1
\end{pmatrix}\text{~with~}f|B_{\m}^{-1}:=f|\begin{pmatrix}
1 & 0\\ 0 & \m
\end{pmatrix}.$$ Then we have
$$
\begin{cases}
(f|B_\m)^0(\pi_{\infty}^k) = f^0(\pi_{\infty}^{k-\deg(\m)});\\
(f|B_\m)^{\ast}(\n) = f^{\ast}(\n/\m).
\end{cases}
$$
Note that $f^{\ast}(\n/\m) = 0$ if $\m\nmid \n$.
\end{lem}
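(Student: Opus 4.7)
The plan is a direct computation from the definitions. The first step will be to set $d := \deg(\m)$ and exploit that $\m\in A$ is monic of degree $d$ to write $\m = w\,\pi_\infty^{-d}$ with $w\in 1+\pi_\infty\mathcal{O}_\infty\subset \mathcal{O}_\infty^{\times}$; then
\[
\begin{pmatrix}\m & 0 \\ 0 & 1\end{pmatrix}\begin{pmatrix}\pi_\infty^j & u \\ 0 & 1\end{pmatrix} = \begin{pmatrix}w\pi_\infty^{j-d} & \m u \\ 0 & 1\end{pmatrix},
\]
and right-multiplication by $\begin{pmatrix}w^{-1} & 0 \\ 0 & 1\end{pmatrix}\in\mathcal{I}$ identifies this with the edge $\begin{pmatrix}\pi_\infty^{j-d} & \m u \\ 0 & 1\end{pmatrix}$, on which $f$ takes the same value.

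For the constant coefficient, I would handle the case $k\leq 1$ immediately from the above identification with $u=0$. For $k\geq 1$, I would plug the identification into the defining sum for $(f|B_\m)^0(\pi_\infty^k)$ and substitute $v:=\m u$, so that $v$ ranges over $\pi_\infty^{1-d}\mathcal{O}_\infty/\pi_\infty^{k-d}\mathcal{O}_\infty$. Decomposing $v=v_++v_-$ with $v_+\in\pi_\infty\mathcal{O}_\infty/\pi_\infty^{k-d}\mathcal{O}_\infty$ and $v_-\in A$ of degree $<d$, the $\Gamma_\infty$-invariance of $f$ applied to $\begin{pmatrix}1 & v_- \\ 0 & 1\end{pmatrix}\in\Gamma_\infty$ factors out the sum over $v_-$, contributing $q^d$, which combines with the shifted prefactor to give exactly $f^0(\pi_\infty^{k-d})$.

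The $\n$-th coefficient calculation is parallel, now taking $j = 2+\deg(\n)$. After the identification and substitution $v=\m u$, the character factor splits as $\eta(-(n/m)v)=\eta(-(n/m)v_+)\eta(-(n/m)v_-)$. When $\m\mid\n$, $n/m\in A$ has degree $\deg(\n)-d$, so $(n/m)v_-\in A$ has degree $<\deg(\n)$ and its $\pi_\infty^1$-coefficient vanishes; the inner sum contributes $q^d$ and the outer sum yields $f^*(\n/\m)$. When $\m\nmid\n$, writing $n=qm+r$ with $0\neq r\in A$ of degree $<d$ shows that some $\pi_\infty^k$-coefficient of $n/m$ with $k\in\{1,\ldots,d\}$ is non-zero, so $v_-\mapsto$ (coefficient of $\pi_\infty^1$ in $(n/m)v_-$) is a non-trivial $\mathbb{F}_q$-linear functional; hence the character sum over $v_-$ vanishes and $(f|B_\m)^*(\n)=0=f^*(\n/\m)$.

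The main source of difficulty will be bookkeeping: tracking the ranges of summation under $v=\m u$, verifying each coset of $\pi_\infty^{1-d}\mathcal{O}_\infty/\pi_\infty\mathcal{O}_\infty$ admits a polynomial representative in $A$, and cleanly separating the cases $\m\mid\n$ versus $\m\nmid\n$. Each step is routine, but precise accounting of $\pi_\infty$-exponents is essential to avoid off-by-one errors.
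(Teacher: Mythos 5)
The paper does not prove this lemma; it is cited verbatim from \cite[Lemma 2.14]{papikian_rational_2017}, so there is no in-paper proof to compare against. Your direct-computation plan is the natural one and is essentially correct: right-absorbing the unit part of $\m$ into $\mathcal{I}$ to reduce $B_\m\left(\begin{smallmatrix}\pi_\infty^k & u\\ 0 & 1\end{smallmatrix}\right)$ to $\left(\begin{smallmatrix}\pi_\infty^{k-d} & \m u\\ 0 & 1\end{smallmatrix}\right)$, substituting $v=\m u$, splitting off the integral (polynomial) part $v_-$ via $\Gamma_\infty$-invariance, and identifying the $\m\nmid\n$ vanishing with a nontrivial additive character sum are all the right moves, and the nontriviality argument (the leading coefficient of $r/m$ sitting at $\pi_\infty^{j_0}$ with $1\le j_0\le d$) is sound.

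One place where the bookkeeping as you have sketched it would literally give a wrong prefactor: for $2\le k\le d$ the quotient $\pi_\infty\mathcal{O}_\infty/\pi_\infty^{k-d}\mathcal{O}_\infty$ is trivial, and $v$ ranges over only $q^{k-1}$ classes, not $q^{d}\cdot|\pi_\infty\mathcal{O}_\infty/\pi_\infty^{k-d}\mathcal{O}_\infty|$. The decomposition $v=v_++v_-$ by polynomial representatives of degree $<d$ is then $q^{d-k+1}$-to-one rather than bijective, so ``the sum over $v_-$ contributes $q^d$'' overcounts; the actual count is $q^{k-1}$, which cancels the prefactor $q^{1-k}$ exactly, and you must then invoke the $j\le 1$ branch of the definition of $f^0$ (namely $f^0(\pi_\infty^j)=f\!\left(\begin{smallmatrix}\pi_\infty^j&0\\0&1\end{smallmatrix}\right)$) rather than the summed branch. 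The same care is needed for $(f|B_\m)^*(\n)$ when $\deg(\n)<\deg(\m)$. You flag this as ``off-by-one'' risk, and indeed it is exactly where a literal reading of your plan goes astray; once the two regimes $k\le d+1$ and $k\ge d+2$ are treated separately the argument closes correctly.
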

We derive that:
\begin{lem} \label{lem: F. coeff. of Delta_d}
Fix a prime ideal $\p\lhd A$ and $i\geq 1$. Consider $\widetilde{r}(\Delta_{\p^i})\in \mathcal{H}(\Tree, \mathbb{Z})\subset \mathcal{H}(\Tree, \mathbb{C})$, which is invariant under $\Gamma_\infty$. Then we have
$$
\begin{cases}
    \widetilde{r}(\Delta_{\p^i})^0(\pi_{\infty}^k) = -(q-1)q^{1-k}|\p|^i;\\
    \widetilde{r}(\Delta_{\p^i})^\ast(1) = 0.
\end{cases}
$$
\end{lem}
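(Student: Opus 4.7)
The plan is to reduce the computation for $\Delta_{\p^i}$ to the known Fourier coefficients of $\Delta$ in Equation \eqref{eq: Fourier coeff. of Delta} via the translation lemma preceding the statement.

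First I would observe that, on $\Omega$, the definition $\Delta_{\p^i}(z) = \Delta(\p^i z)$ is exactly the action of the matrix $B_{\p^i} = \begin{pmatrix} \p^i & 0 \\ 0 & 1\end{pmatrix}$ on $\Delta$ by linear fractional transformation, i.e., $\Delta_{\p^i} = \Delta \mid B_{\p^i}$ in the sense of Section \ref{Section: Harmonic cochains and Fourier expansion}. Since the van der Put map $\widetilde{r}$ of Theorem \ref{thm: van der Put map} is $G(K_\infty)$-equivariant (this is the content of the sequence being a sequence of $G(K_\infty)$-modules), this compatibility transfers to harmonic cochains: $\widetilde{r}(\Delta_{\p^i}) = \widetilde{r}(\Delta) \mid B_{\p^i}$. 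In particular, $\widetilde{r}(\Delta_{\p^i})$ remains invariant under $\Gamma_\infty$, since $B_{\p^i}$ normalizes $\Gamma_\infty$ up to the scaling that acts trivially on $\HarZ$.

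Next I would feed this into the preceding lemma with $\m = \p^i$. For the constant coefficient at $\pi_\infty^k$, the lemma gives
$$\widetilde{r}(\Delta_{\p^i})^0(\pi_\infty^k) = \widetilde{r}(\Delta)^0(\pi_\infty^{k - i\deg(\p)}),$$
and substituting $\widetilde{r}(\Delta)^0(\pi_\infty^{m}) = -(q-1)q^{1-m}$ from Equation \eqref{eq: Fourier coeff. of Delta} yields $-(q-1)q^{1-k}q^{i\deg(\p)} = -(q-1)q^{1-k}|\p|^i$, which is the claimed value. For the $1$-st Fourier coefficient, the lemma gives $\widetilde{r}(\Delta_{\p^i})^\ast(1) = \widetilde{r}(\Delta)^\ast(1/\p^i)$; since $i \geq 1$ the divisor $1/\p^i$ is not non-negative (equivalently, $\p^i \nmid 1$), so this vanishes by the convention stated in the lemma.

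There is no real obstacle here beyond bookkeeping: the only subtle point worth double-checking is the $G(K_\infty)$-equivariance of $\widetilde{r}$ on the translate $\Delta_{\p^i}$, which is why I would spell out the identification $\Delta_{\p^i} = \Delta \mid B_{\p^i}$ explicitly before invoking the preceding lemma. Once that is in place the statement follows from a two-line substitution in each case.
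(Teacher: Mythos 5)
Your proposal is correct and takes essentially the same route as the paper: apply the preceding lemma (Lemma 2.14 of Papikian–Wei) with $\m=\p^i$ and substitute the Fourier data of $\widetilde{r}(\Delta)$ from Equation \eqref{eq: Fourier coeff. of Delta}. The only difference is that you spell out the identification $\widetilde{r}(\Delta_{\p^i})=\widetilde{r}(\Delta)\mid B_{\p^i}$ via $G(K_\infty)$-equivariance of $\widetilde{r}$, which the paper leaves implicit; note only that $B_{\p^i}$ does not literally normalize $\Gamma_\infty$ but conjugates it into itself (it scales the upper-right entry by $\p^i$), which is what is actually needed for $\Gamma_\infty$-invariance of $\widetilde{r}(\Delta)\mid B_{\p^i}$.
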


\begin{proof}
By the previous lemma, we have $\widetilde{r}(\Delta_{\p^i})^0(\pi_{\infty}^k) = \widetilde{r}(\Delta)^0(\pi_{\infty}^{k-\deg(\p^i)})$ and $\widetilde{r}(\Delta_{\p^i})^\ast(1) = \widetilde{r}(\Delta)^\ast((\p^i)^{-1})$, which imply the result by Equation (\ref{eq: Fourier coeff. of Delta}).
\end{proof}

\section{Hecke operators and Eisenstein ideals}\label{Section: Hecke operators and Eisenstein ideals}
Fix a prime power ideal $\n=\p^r$ of $A$ with $r\geq 2$, and let $R$ be a commutative ring with unity. For a non-zero ideal $\m\lhd A$, we define an $R$-linear transformation of the space of $R$-valued functions on $E(\Tree)$ by $$f|T_\m := \sum f|\begin{pmatrix}
a & b\\ 0 & d
\end{pmatrix},$$ where the sum is over $a, b, d\in A$ such that $a$, $d$ are monic, $(ad)=\m$, $(a)+\n = A$, and $\deg(b)<\deg(d)$. This transformation is the $\m$-th Hecke operator; cf. \cite[Section 2.3]{papikian_eisenstein_2016}. In particular, for the prime divisor $\p$ of $\n$, we have $$f|U_\p := \sum_{\substack{b\in A\\ \deg(b)<\deg(\p)}} f|\begin{pmatrix}
1 & b\\ 0 & \p
\end{pmatrix} = f|T_\p.$$

\begin{rem}
The definition of $T_{\p} = U_{\p}$ above is different from that of $T_{\p} := U_{\p}+B_{\p}$ in \cite[(1.17)]{gekeler_1997}.
\end{rem}

\begin{prop}[{\cite[Proposition 2.10]{papikian_eisenstein_2015}}]\label{decomp. of Hecke operators}
The Hecke operators preserve the spaces $\mathcal{H}(\n,R)$ and $\mathcal{H}_0(\n,R)$, and satisfy the recursive formulas:
\begin{enumerate}
    \item $T_{\m\m'}=T_{\m}T_{\m'}$ if $\m+\m'=A$.
    \item $T_{\q^i}=T_{\q^{i-1}}T_{\q}-|\q|T_{\q^{i-2}}$ for any prime $\q\neq\p$ and $i\geq 2$.
    \item $T_{\p^i} = U_{\p}^i$ for $i\geq 1$.
\end{enumerate}
\end{prop}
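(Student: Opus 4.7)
The proposition has two components: preservation of the relevant function spaces under the Hecke operators, and three algebraic identities (coprime multiplicativity, the standard quadratic recursion for primes away from the level, and the simplification $T_{\p^i}=U_\p^i$). My plan is to dispatch preservation first, then verify the three identities by direct matrix manipulation with the explicit coset representatives appearing in the definition of $T_\m$.

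For preservation, note that for any $\gamma\in G(K_\infty)$ the assignment $f\mapsto f|\gamma$ sends harmonic cochains to harmonic cochains, since $\gamma$ acts on $E(\Tree)$ by graph automorphisms. Hence each summand $f|\begin{pmatrix}a&b\\0&d\end{pmatrix}$ lies in $\mathcal{H}(\Tree,R)$, so $T_\m$ preserves $\mathcal{H}(\Tree,R)$. For $\Gamma_0(\n)$-invariance I would verify that for each $\gamma_0\in\Gamma_0(\n)$, the collection $\{\alpha_\lambda\gamma_0\}$ — with $\alpha_\lambda$ ranging over the matrices defining $T_\m$ — coincides, up to left multiplication by elements of $\Gamma_0(\n)$, with $\{\alpha_\lambda\}$; this is a standard coset-rearrangement once one checks that the summation set is a complete set of left $\Gamma_0(\n)$-cosets in the double coset $\Gamma_0(\n)\cdot\operatorname{diag}(\m,1)\cdot\Gamma_0(\n)$. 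Preservation of $\mathcal{H}_0(\n,R)$ then follows because each $\alpha_\lambda$ permutes the cusps $\Gamma_0(\n)\backslash\mathbb{P}^1(K)$, so compact support on the quotient graph $\Gamma_0(\n)\backslash\Tree$ is preserved by the finite sum.

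For identity (1), when $\m+\m'=A$ the Chinese remainder theorem provides a bijection between matrices $\begin{pmatrix}a&b\\0&d\end{pmatrix}$ summed in $T_{\m\m'}$ and ordered pairs of matrices in the sums defining $T_\m$ and $T_{\m'}$: write $a=a_1a_2$, $d=d_1d_2$ according to the coprime factorization, and recover $b\pmod d$ from residues $(b_1\pmod{d_1},b_2\pmod{d_2})$ via the identity
\[
\begin{pmatrix}a_1&b_1\\0&d_1\end{pmatrix}\begin{pmatrix}a_2&b_2\\0&d_2\end{pmatrix}=\begin{pmatrix}a_1a_2&a_1b_2+b_1d_2\\0&d_1d_2\end{pmatrix}.
\]
For identity (3), the constraint $(a)+\n=A$ together with $a\mid\p^i$ monic forces $a=1$ and $d=\p^i$, so $f|T_{\p^i}=\sum_{\deg b<i\deg\p} f|\begin{pmatrix}1&b\\0&\p^i\end{pmatrix}$; expanding $U_\p^i$ and using the unique $\p$-adic expansion $b=b_1\p^{i-1}+\cdots+b_i$ with $\deg b_j<\deg\p$ matches the two sums term by term via the matrix product $\prod_{j}\begin{pmatrix}1&b_j\\0&\p\end{pmatrix}=\begin{pmatrix}1&\sum_j b_j\p^{i-j}\\0&\p^i\end{pmatrix}$.

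The main obstacle is the recursion (2) for a prime $\q\nmid\n$. One expands $T_{\q^{i-1}}T_\q$ as a sum of composites of upper-triangular matrices and collects them by their reduced form. Composites already in reduced form account for the summands of $T_{\q^i}$ with multiplicity one. Composites in which a power of $\q$ can be pulled out from both columns reduce, after canceling the scalar (using $\mathcal{H}(\Tree,R)$ being a $Z(K_\infty)$-invariant space), to matrices appearing in $T_{\q^{i-2}}$; the combinatorial content is to verify that each such lower-level matrix is produced exactly $|\q|$ times — the count arising from the $|\q|$ residues $c$ of degree $<\deg\q$ that produce the needed cancellation in the $(1,2)$ entry. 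Once this bookkeeping is in place the recursion $T_{\q^i}=T_{\q^{i-1}}T_\q-|\q|T_{\q^{i-2}}$ follows by subtracting the overcounted contribution.
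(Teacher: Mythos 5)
The paper supplies no proof of this proposition: it is quoted verbatim from \cite[Proposition 2.10]{papikian_eisenstein_2015}. Your argument is the standard coset-representative computation, and in substance it is correct and is almost certainly the same route the cited reference takes. To add a bit of bookkeeping in support of (2): writing $\sigma(j)=\tfrac{|\q|^{j+1}-1}{|\q|-1}$ for the number of representatives defining $T_{\q^j}$, one has $(|\q|+1)\sigma(i-1)=\sigma(i)+|\q|\sigma(i-2)$, consistent with your count that the unreduced products with $\beta=\begin{pmatrix}1&c\\0&\q\end{pmatrix}$ and $(1,1)$-entry $\q^k$, $k\geq 1$, collapse $|\q|$-to-one (after dividing by the scalar $\q$, legitimate since $E(\Tree)=G(K_\infty)/\mathcal{I}Z(K_\infty)$, and then reducing the $(1,2)$-entry modulo $\q^{i-1-k}$ by $\Gamma_\infty$) onto the representatives of $T_{\q^{i-2}}$. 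For (1), one should also note that $a=a_1a_2$ is prime to $\n$ iff both $a_1$ and $a_2$ are, since $a_1,a_2\mid a$, so the coprimality constraint in the definition of $T_\m$ is respected by the CRT bijection.

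One sentence would not survive scrutiny: it is not correct that each $\alpha_\lambda$ ``permutes the cusps $\Gamma_0(\n)\backslash\mathbb{P}^1(K)$'' --- an $\alpha_\lambda\notin\Gamma_0(\n)$ does not act on that quotient set at all. Preservation of cuspidality requires a genuine (though short) argument: either factor $T_\m$ through pullback and pushforward along finite degeneracy coverings of quotient graphs, or check directly that for a fixed finite set $S\subset E(\Gamma_0(\n)\backslash\Tree)$ the set of edge classes $e$ with some $\alpha_\lambda e$ landing over $S$ is again finite. As written, this step is the only real gap; everything else in the proposal is sound.
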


\begin{Ex} \label{Ex: U_p action on harm. cochains}
Consider the action of $U_{\p}$ on $\mathcal{H}(\n, R)$. Let $f\in \mathcal{H}(\n, R)$. Since $\p^2\mid \n$, we have $h := f|U_{\p}\in \mathcal{H}(\n/\p, R)$ by \cite[Lemma 2.6]{papikian_eisenstein_2016}. Moreover, since $h$ is invariant under $\Gamma_{\infty}$, we have $(h|B_{\p})|U_{\p} = |\p|\cdot h$ by \cite[Lemma 2.23]{papikian_eisenstein_2015}. We obtain:
\[\begin{tikzcd}
U_{\p}: \mathcal{H}(\n, R) \arrow{r} & \mathcal{H}(\n/\p, R) \subset \mathcal{H}(\n, R) \\
f \arrow[mapsto]{r} & h := f|U_{\p}\\
h|B_{\p} \arrow[mapsto]{r} & {|\p|\cdot h}.
\end{tikzcd}\]
\end{Ex}

\begin{lem}[{\cite[Corollary 2.11]{gekeler_1997}}]\label{small lemma 1}
Let $\q\lhd A$ be a prime not equal to $\p$. Then 
\begin{enumerate}
    \item $\widetilde{r}(\Delta)|T_{\q} = (|\q|+1)\cdot\widetilde{r}(\Delta).$
    \item $\widetilde{r}(\Delta)|(U_{\p}+B_{\p}) = (|\p|+1)\cdot\widetilde{r}(\Delta).$ Alternatively, $$\widetilde{r}(\Delta)|U_{\p} = (|\p|+1)\cdot\widetilde{r}(\Delta) - \widetilde{r}(\Delta_{\p}).$$
\end{enumerate}
\end{lem}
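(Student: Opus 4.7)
My strategy is to reduce both identities to equalities of Fourier coefficients. Since $\Delta$ is a modular form of level $1$, its van der Put image $\widetilde{r}(\Delta)$ is $G(A)$-invariant, in particular $\Gamma_\infty$-invariant, and the same holds for each Hecke translate appearing in the statement. Both sides of (1) and (2) are therefore $\Gamma_\infty$-invariant harmonic cochains, and by the uniqueness in Theorem \ref{Thm: Fourier expansion} it suffices to verify that their constant coefficients $f^0$ and non-constant coefficients $f^\ast$ agree.

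The next step is to derive formulas describing how $T_\q$ (for $\q\neq\p$) and $U_\p$ transform the Fourier coefficients of a $\Gamma_\infty$-invariant harmonic cochain, the analogous formula for the shift $B_\m$ being already recorded in the lemma preceding Lemma \ref{lem: F. coeff. of Delta_d}. Expanding $T_\q = U_\q + B_\q$, evaluating at the test matrix $\begin{pmatrix}\pi_\infty^k & u\\ 0 & 1\end{pmatrix}$, normalizing the resulting products back to standard upper-triangular form modulo $\mathcal{I}\cdot Z(K_\infty)$, and summing the character $\eta$ over the inner parameter $b$, should produce relations of the shape
\[
(f|T_\q)^0(\pi_\infty^k) = f^0(\pi_\infty^{k-\deg\q}) + |\q|\cdot f^0(\pi_\infty^{k+\deg\q}),
\]
together with a parallel identity on $f^\ast$ that pairs $f^\ast(\m\q)$ with $f^\ast(\m/\q)$ with appropriate powers of $|\q|$; the companion statement for $U_\p$ alone omits the $B_\p$ contribution. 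Substituting the values from Equation (\ref{eq: Fourier coeff. of Delta}) and the divisor-sum extension of $\widetilde{r}(\Delta)^\ast$ to higher $\m$ (which is Eisenstein-like), a direct calculation confirms that each Fourier coefficient of $\widetilde{r}(\Delta)|T_\q$ equals $(|\q|+1)$ times the corresponding coefficient of $\widetilde{r}(\Delta)$, which gives (1). For (2), the same formulas specialized to $\q=\p$, combined with the identity $\widetilde{r}(\Delta)|B_\p = \widetilde{r}(\Delta_\p)$ and Lemma \ref{lem: F. coeff. of Delta_d}, produce the first form; the alternative form then follows by subtraction.

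The main obstacle is the careful derivation of the Hecke action on Fourier coefficients: viewed in $K_\infty$ as $\pi_\infty^{-\deg\q}\cdot(\text{unit in }\mathcal{O}_\infty^\times)$, the polynomial $\q$ is non-monomial in $\pi_\infty$, so dividing by $\q$ to restore standard form creates extra Iwahori factors that must be absorbed correctly, and the internal character sum over $b$ must collapse as expected. A conceptually cleaner alternative I would use as a sanity check is to identify $\widetilde{r}(\Delta)$ (up to normalization) with the level-$1$ Eisenstein harmonic cochain whose $\m$-th Fourier coefficient is proportional to the divisor sum $\sigma_1(\m) = \sum_{d\mid\m, d \text{ monic}}|d|$; the Hecke eigenvalue $|\q|+1$ then follows from the standard multiplicative identity $\sigma_1(m\q) + \sigma_1(m/\q) = (|\q|+1)\sigma_1(m)$ for prime $\q$, bypassing the matrix bookkeeping entirely. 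As a further internal consistency check, the relation $(f|B_\p)|U_\p = |\p|f$ from Example \ref{Ex: U_p action on harm. cochains} offers an independent cross-check on the $U_\p$-formula used in part (2).
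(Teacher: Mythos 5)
The paper gives no proof of this lemma: it is quoted directly from Gekeler's Corollary 2.11 in \cite{gekeler_1997}, with the ``alternative'' form in part (2) being a trivial rewrite using $\widetilde{r}(\Delta)|B_\p = \widetilde{r}(\Delta_\p)$. Your proposal reconstructs essentially the argument of that cited source: reduce to Fourier coefficients via the uniqueness in Theorem \ref{Thm: Fourier expansion} (legitimate since $\widetilde{r}(\Delta)$ is $G(A)$-invariant, hence $\Gamma_\infty$-invariant, as are all its Hecke translates), compute the effect of $T_\q = U_\q + B_\q$ on the constant and $\m$-th coefficients, and substitute the known Eisenstein-type expansion of $\widetilde{r}(\Delta)$. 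The transformation law you wrote for the constant term, $(f|T_\q)^0(\pi_\infty^k) = f^0(\pi_\infty^{k-\deg\q}) + |\q|\cdot f^0(\pi_\infty^{k+\deg\q})$, is indeed the right one (the first summand coming from $B_\q$, the second from $U_\q$ after absorbing the unit $\q\pi_\infty^{\deg\q}\in\mathcal{O}_\infty^\times$ into the Iwahori factor and collapsing the character sum over $b$), and substituting $\widetilde{r}(\Delta)^0(\pi_\infty^k) = -(q-1)q^{1-k}$ does give eigenvalue $|\q|+1$. Part (2) is then the same computation with $\q=\p$, specializing the level-$1$ statement before restricting to $\Gamma_0(\n)$.

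One small inaccuracy in your ``sanity check'': the identity $\sigma_1(m\q)+\sigma_1(m/\q)=(|\q|+1)\sigma_1(m)$ fails when $\q\mid m$. The correct recursion is $\sigma_1(m\q)+|\q|\,\sigma_1(m/\q)=(|\q|+1)\sigma_1(m)$, which is the one compatible with the $|\q|$-factor you already wrote in the constant-coefficient formula and with the relation $(f|T_\m)^\ast(1)=|\m|f^\ast(\m)$ used in the proof of Lemma \ref{lemma: eigenfunction}. Since your main line of argument carries the correct factor, this is a slip in the cross-check, not a gap in the proof.
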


From the above, we prove the following:
\begin{lem}\label{small lemma 2} Let $k\geq 1$ and $\q\lhd A$ be a prime not equal to $\p$. Then
\begin{enumerate}
    \item $\widetilde{r}(\Delta_{\p^k})|T_{\q} = (|\q|+1)\cdot\widetilde{r}(\Delta_{\p^k})$.
    \item $\widetilde{r}(\Delta_{\p^k})|U_{\p} = |\p|\cdot \widetilde{r}(\Delta_{\p^{k-1}})$.
\end{enumerate}
\end{lem}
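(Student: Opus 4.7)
My plan is to deduce both identities from the relation
$$\widetilde{r}(\Delta_{\p^k})=\widetilde{r}(\Delta)\bigm|B_{\p^k},$$
after which everything reduces to commuting $B_{\p^k}$ (or $B_\p$) past the relevant Hecke operator. The relation itself is essentially the content of Lemma \ref{lem: F. coeff. of Delta_d} and its proof: the Fourier coefficients of $\widetilde{r}(\Delta_{\p^k})$ match, by construction, those of $\widetilde{r}(\Delta)|B_{\p^k}$, and by the uniqueness half of Theorem \ref{Thm: Fourier expansion} the two cochains coincide.

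For part (1), I will show that $B_{\p^k}$ commutes with $T_\q$ on any $\Gamma_\infty$-invariant harmonic cochain $f$. Writing out both sides,
$$(f|B_{\p^k})|T_\q=\sum_{\substack{ad=\q\\\deg b<\deg d}} f\Bigm|\begin{pmatrix}\p^k a & \p^k b\\ 0 & d\end{pmatrix},\qquad (f|T_\q)|B_{\p^k}=\sum_{\substack{ad=\q\\\deg b<\deg d}} f\Bigm|\begin{pmatrix}\p^k a & b\\ 0 & d\end{pmatrix}.$$
Since $d\mid\q$ and $\q\neq\p$, $\p^k$ is invertible modulo $d$, so $b\mapsto \p^k b\bmod d$ is a bijection on $\{b\in A:\deg b<\deg d\}$. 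Writing $\p^k b=(\p^k b\bmod d)+md$ with $m\in A$, we factor
$$\begin{pmatrix}\p^k a & \p^k b\\ 0 & d\end{pmatrix}=\begin{pmatrix}1 & m\\ 0 & 1\end{pmatrix}\begin{pmatrix}\p^k a & \p^k b\bmod d\\ 0 & d\end{pmatrix},$$
and the left factor lies in $\Gamma_\infty$, hence acts trivially on $f$ by $\Gamma_\infty$-invariance. After reindexing by $b\mapsto \p^k b\bmod d$, the two sums agree. The identity then follows by applying Lemma \ref{small lemma 1}(1) to $\widetilde{r}(\Delta)$:
$$\widetilde{r}(\Delta_{\p^k})|T_\q=\bigl(\widetilde{r}(\Delta)|T_\q\bigr)\bigm|B_{\p^k}=(|\q|+1)\cdot\widetilde{r}(\Delta)\bigm|B_{\p^k}=(|\q|+1)\cdot\widetilde{r}(\Delta_{\p^k}).$$

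For part (2), I will invoke the identity $(h|B_\p)|U_\p=|\p|\cdot h$, valid for every $\Gamma_\infty$-invariant harmonic cochain $h$, which is recorded in Example \ref{Ex: U_p action on harm. cochains}. A quick check (using that $\widetilde{r}(\Delta)$ is $G(A)$-invariant and that $B_\p^{k-1}\Gamma_\infty B_\p^{-(k-1)}\subset\Gamma_\infty$) shows that $h:=\widetilde{r}(\Delta_{\p^{k-1}})=\widetilde{r}(\Delta)|B_\p^{k-1}$ is $\Gamma_\infty$-invariant, so
$$\widetilde{r}(\Delta_{\p^k})|U_\p=\bigl(\widetilde{r}(\Delta_{\p^{k-1}})|B_\p\bigr)|U_\p=|\p|\cdot\widetilde{r}(\Delta_{\p^{k-1}}).$$
The only genuine computation is the commutation argument in part (1); once that is spelled out, both statements collapse to applications of Lemma \ref{small lemma 1} and Example \ref{Ex: U_p action on harm. cochains}.
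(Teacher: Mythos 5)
Your proof is correct and follows essentially the same route as the paper: part (1) is the same commutation of $B_{\p^k}$ past $T_\q$ using the $\Gamma_\infty$-invariance of $\widetilde{r}(\Delta)$ to absorb the upper-triangular unipotent factor and the bijection $b\mapsto \p^k b\bmod d$ to reindex, followed by Lemma~\ref{small lemma 1}(1); and part (2) is precisely the identity $(h|B_\p)|U_\p=|\p|\cdot h$ from Example~\ref{Ex: U_p action on harm. cochains}, which the paper simply re-derives inline rather than citing. Packaging the commutation as a standalone lemma is a modest organizational difference, not a different argument.
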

\begin{proof}
First, we have 
\begin{align*}
&\widetilde{r}(\Delta_{\p^k})|T_{\q} = \widetilde{r}(\Delta)|\begin{pmatrix}
\p^k & 0\\ 0 & 1
\end{pmatrix}\begin{pmatrix}
\q & 0\\ 0 & 1
\end{pmatrix}+\sum_{\substack{b\in A\\ \deg(b)<\deg(\q)}} \widetilde{r}(\Delta)|\begin{pmatrix}
\p^k & 0\\ 0 & 1
\end{pmatrix}\begin{pmatrix}
1 & b\\ 0 & \q
\end{pmatrix}\\
&= \widetilde{r}(\Delta)|\begin{pmatrix}
\q & 0\\ 0 & 1
\end{pmatrix}\begin{pmatrix}
\p^k & 0\\ 0 & 1
\end{pmatrix}+\sum_{\substack{b\in A\\ \deg(b)<\deg(\q)}} \widetilde{r}(\Delta)|\begin{pmatrix}
1 & b \p^k\\ 0 & \q
\end{pmatrix}\begin{pmatrix}
\p^k & 0\\ 0 & 1
\end{pmatrix}\\
&= \widetilde{r}(\Delta)|\begin{pmatrix}
\q & 0\\ 0 & 1
\end{pmatrix}\begin{pmatrix}
\p^k & 0\\ 0 & 1
\end{pmatrix}+\sum_{\substack{b\in A\\ \deg(b)<\deg(\q)}} \widetilde{r}(\Delta)|\begin{pmatrix}
1 & b\\ 0 & \q
\end{pmatrix}\begin{pmatrix}
\p^k & 0\\ 0 & 1
\end{pmatrix}\\
&=(\widetilde{r}(\Delta)|T_{\q})|\begin{pmatrix}
\p^k & 0\\ 0 & 1
\end{pmatrix} = (|\q|+1)\cdot\widetilde{r}(\Delta)|\begin{pmatrix}
\p^k & 0\\ 0 & 1
\end{pmatrix} = (|\q|+1)\cdot\widetilde{r}(\Delta_{\p^k}).
\end{align*}
For the second statement, since $\widetilde{r}(\Delta_{\p^{k-1}})$ is $\Gamma_{\infty}$-invariant, we have
\begin{align*}
&\widetilde{r}(\Delta_{\p^k})|U_{\p} = \sum_{\substack{b\in A\\ \deg(b)<\deg(\p)}} \widetilde{r}(\Delta_{\p^{k-1}})|\begin{pmatrix}
\p & 0\\ 0 & 1
\end{pmatrix}\begin{pmatrix}
1 & b\\ 0 & \p
\end{pmatrix}\\
&= \sum_{\substack{b\in A\\ \deg(b)<\deg(\p)}} \widetilde{r}(\Delta_{\p^{k-1}})|\begin{pmatrix}
\p & 0\\ 0 & \p
\end{pmatrix}\begin{pmatrix}
1 & b\\ 0 & 1
\end{pmatrix} = |\p|\cdot \widetilde{r}(\Delta_{\p^{k-1}}).
\end{align*}
\end{proof}

Recall that $\mathbb{T}(\n)$ is the $\mathbb{Z}$-algebra generated by all Hecke operators $T_{\q}$, where $\q\lhd A$ are primes, acting on the group $\mathcal{H}_0(\n, \mathbb{Z})$, and the Eisenstein ideal $\mathfrak{E}(\n)$ of $\mathbb{T}(\n)$ is the ideal generated by the elements $$\{T_\q-|\q|-1\mid \q \text{ is prime}, \q\nmid \n\}.$$ To simplify the notation, we put
\begin{itemize}
    \item $\mathcal{E}(\n, R):=\mathcal{H}(\n, R)[\mathfrak{E}(\n)]\subset\mathcal{H}(\n, R)$,
    \item $\mathcal{E}_{0}(\n, R):=\mathcal{H}_{0}(\n, R)[\mathfrak{E}(\n)]\subset\mathcal{H}_{0}(\n, R)$,
    \item $\mathcal{E}_{00}(\n, R):=\mathcal{H}_{00}(\n, R)[\mathfrak{E}(\n)]\subset\mathcal{H}_{00}(\n, R)$,
\end{itemize}
the subgroups consisting of the elements annihilated by $\mathfrak{E}(\n)$.

Let $v$ be a place of $K$. Denote by $K_v$ the completion of $K$ at $v$, $\mathcal{O}_v$ the ring of integers of $K_v$, $\mathbb{F}_v$ the residue field of $\mathcal{O}_v$, and $\overline{\mathbb{F}_v}$ an algebraic closure of $\mathbb{F}_v$. Moreover, we denote by $K_v^{\text{nr}}$ and $\mathcal{O}_v^{\text{nr}}$ the maximal unramified extension of $K_v$ and $\mathcal{O}_v$, respectively. Let $\mathcal{J}$ denote the N\'eron model of $J_0(\n)$ over $\mathbb{P}^1_{\mathbb{F}_q}$. Let $\mathcal{J}^0$ denote the relative connected component of the identity of $\mathcal{J}$, that is, the largest open subscheme of $\mathcal{J}$ containing the identity section which has connected fibres. The group of connected components of $J_0(\n)$ at $v$ is $\Phi_v := \mathcal{J}_{\mathbb{F}_v}/\mathcal{J}_{\mathbb{F}_v}^0$. There is a natural map in \cite[p. 586]{papikian_eisenstein_2015}, called the canonical specialization map: 
\begin{equation} \label{the canonical specialization map}
\wp_v: J_0(\n)(K_v^{\text{nr}})=\mathcal{J}(\mathcal{O}_v^{\text{nr}})\rightarrow \mathcal{J}_{\mathbb{F}_v}(\overline{\mathbb{F}_v})\rightarrow \Phi_v.
\end{equation}
We remark that the Hecke algebra $\mathbb{T}(\n)$ naturally acts on $J_0(\n)$, which functorially extends to $\mathcal{J}$; hence, $\mathbb{T}(\n)$ also acts on $\Phi_v$; cf. \cite[p. 386]{papikian_eisenstein_2016}.

\section{Two degeneracy maps}\label{Section: Two degeneracy maps}
Fix a prime power ideal $\n=\p^r$ of $A$ with $r\geq 2$. Recall that the curve $Y_0(\n)$ is the generic fibre of the coarse moduli scheme for the functor which associates to an $A$-scheme $S$ the set of isomorphism classes of pairs $(\phi, C_{\n})$, where $\phi$ is a rank-$2$ Drinfeld module over $S$ with an $\n$-cyclic subgroup $C_{\n}$; for details, see \cite{drinfeld_elliptic_1974}, \cite{gekeler1986drinfeld}, and \cite{GekelerREVERSAT}. Let $\alpha_\p(\n), \beta_\p(\n): Y_0(\n)\rightarrow Y_0(\n/\p)$ be the degeneracy maps with moduli-theoretic interpretations: \begin{align*}
&\alpha := \alpha_\p(\n): (\phi, C_{\n})\mapsto (\phi, C_{\n/\p}),\\
&\beta := \beta_\p(\n): (\phi, C_{\n})\mapsto (\phi/C_\p, C_{\n}/C_{\p}),
\end{align*}
where $C_{\n/\p}$ and $C_{\p}$ are the subgroups of $C_{\n}$ of order $\n/\p$ and $\p$, respectively. These morphisms are proper, and hence uniquely extend to morphisms $\alpha, \beta: X_0(\n)\rightarrow X_0(\n/\p)$; cf. \cite[p. 584]{papikian_eisenstein_2015}. Moreover, by Picard functoriality, the maps $\alpha$ and $\beta$ induce two homomorphisms: $$\alpha^\ast := \alpha_\p(\n)^\ast: J_0(\n/\p)\rightarrow J_0(\n),~\beta^\ast := \beta_\p(\n)^\ast: J_0(\n/\p)\rightarrow J_0(\n),$$ with their duals denoted by $$\alpha_\ast := \alpha_\p(\n)_\ast: J_0(\n)\rightarrow J_0(\n/\p),~\beta_\ast := \beta_\p(\n)_\ast: J_0(\n)\rightarrow J_0(\n/\p).$$ We take the Hecke operator ``$U_\p$'' acting on $J_0(\n)$ by:
\[\begin{tikzcd}
U_{\p}: J_0(\n) \arrow{r}{\beta_\ast} & J_0(\n/\p) \arrow{r}{\alpha^\ast} & J_0(\n).
\end{tikzcd}\]
Note that $U_{\p}:=\alpha_\p(\n)^\ast\circ \beta_\p(\n)_\ast = \beta_\p(\n\p)_\ast \circ \alpha_\p(\n\p)^\ast$; cf. \cite[Remark 3.5]{YOO_torsion_2023}. The definition of $U_{\p}$ here is the dual version of that in \cite[p. 584]{papikian_eisenstein_2015}, which are conjugate to each other by the Atkin-Lehner involution $W_{\n}$; cf. \cite[p. 398]{papikian_eisenstein_2016}.

\begin{rem}
For a prime ideal $\q\lhd A$ not dividing $\n$, the Hecke operator $T_{\q}$ commutes with the Atkin–Lehner involution $W_{\n}$, so $T_\q$ coincides with its dual; cf. \cite[p. 584]{papikian_eisenstein_2015} and \cite[p. 398]{papikian_eisenstein_2016}.
\end{rem}

Denote $Y_0(\n)_{\mathbb{C}_\infty} := \Gamma_0(\n)\backslash \Omega$ with its completion $X_0(\n)_{\mathbb{C}_\infty} := \Gamma_0(\n)\backslash (\Omega \cup \mathbb{P}^1(K))$. For $z\in \Omega \cup \mathbb{P}^1(K)$, we have $\alpha, \beta: X_0(\n)_{\mathbb{C}_{\infty}}\rightarrow X_0(\n/\p)_{\mathbb{C}_{\infty}}$ with $$\begin{cases}
    \alpha(\Gamma_0(\n)\cdot z) = \Gamma_0(\n/\p)\cdot z; \\
    \beta(\Gamma_0(\n)\cdot z) = \Gamma_0(\n/\p)\cdot \begin{pmatrix}
        \p & 0\\ 0 & 1
    \end{pmatrix}\cdot z.
    \end{cases}$$
Then for a cusp $\Gamma_0(\n)\cdot \begin{bmatrix}
a\\ \p^{i}
\end{bmatrix}$ of $X_0(\n)_{\mathbb{C}_{\infty}}$ in the form defined in Section \ref{Section: The rational cuspidal divisor class group}, we have 
\begin{align*}
&\alpha(\Gamma_0(\n)\cdot \begin{bmatrix}
a\\ \p^{i}
\end{bmatrix}) = \begin{cases}
\Gamma_0(\n/\p)\cdot \begin{bmatrix}
a\\ \p^{i}
\end{bmatrix},  & \text{if $0\leq i \leq r-2$.} \\
\Gamma_0(\n/\p)\cdot [\infty],  & \text{if $i = r-1$ or $r$.}
\end{cases}\\
&\beta(\Gamma_0(\n)\cdot \begin{bmatrix}
a\\ \p^{i}
\end{bmatrix}) = \begin{cases}
\Gamma_0(\n/\p)\cdot [0],  & \text{if $i = 0$ or $1$.} \\
\Gamma_0(\n/\p)\cdot \begin{bmatrix}
a\\ \p^{i-1}
\end{bmatrix},  & \text{if $2\leq i \leq r$.}
\end{cases}
\end{align*}
Moreover, the degeneracy coverings $\alpha$ and $\beta$ have degree $|\p|$. From the above and \cite[Lemma 2.2]{ho_rational_2024}, $\alpha$ and $\beta$ also induce maps between the divisor groups:
\begin{align*}
&\alpha^\ast, \beta^\ast: \Div(X_0(\n/\p))\rightarrow\Div(X_0(\n));\\
&\alpha_\ast, \beta_\ast: \Div(X_0(\n))\rightarrow\Div(X_0(\n/\p)).
\end{align*}
Now, we consider $U_{\p} := \alpha^\ast\circ \beta_\ast: \Div(X_0(\n))\rightarrow \Div(X_0(\n))$. Recall that $(P(\n)_d)$ is the sum of all the cusps of $X_0(\n)$ of height $d$. We have:
\begin{lem} \label{explicit degeneracy map}
\begin{enumerate}
    \item For $(P(\n/\p)_{\p^j})\in \Div(X_0(\n/\p))$,
    $$\alpha^\ast(P(\n/\p)_{\p^j}) = \begin{cases}
    |\p|\cdot (P(\n)_{\p^j}),  & \text{if $0\leq j\leq \lfloor\frac{r-1}{2}\rfloor$.} \\
    (P(\n)_{\p^j}),  & \text{if $\lfloor\frac{r+1}{2}\rfloor \leq j \leq r-2$.} \\
    (q-1)\cdot (P(\n)_{\p^{r-1}})+(P(\n)_{\p^{r}}),  & \text{if $j = r-1$.}
    \end{cases}$$
    \item For $(P(\n)_{\p^j})\in \Div(X_0(\n))$,
    $$\beta_\ast(P(\n)_{\p^j}) = \begin{cases}
    (P(\n/\p)_{1}),  & \text{if $j = 0$.}\\
    \frac{|\p|-1}{q-1}\cdot (P(\n/\p)_{1}),  & \text{if $j = 1$.}\\
    |\p|\cdot (P(\n/\p)_{\p^{j-1}}),  & \text{if $2\leq j \leq \lfloor\frac{r}{2}\rfloor$.} \\
    (P(\n/\p)_{\p^{j-1}}),  & \text{if $\lfloor\frac{r}{2}\rfloor+1 \leq j \leq r$.}
    \end{cases}$$
\end{enumerate}
\end{lem}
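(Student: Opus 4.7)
The plan is to prove both parts by case analysis, using the explicit descriptions of $\alpha$ and $\beta$ on cusps recorded just above the lemma, together with the standard parametrization of cusps of $X_0(\p^r)$ of height $\p^i$ by representatives $\begin{bmatrix} a \\ \p^i \end{bmatrix}$ (with $a$ monic and $\gcd(a,\p)=1$) modulo the equivalence $\alpha a' \equiv a \bmod \p^{\min(i,r-i)}$ for some $\alpha\in\mathbb{F}_q^\times$. In particular, the number of cusps of height $\p^i$ equals $|(A/\p^{\min(i,r-i)})^\times/\mathbb{F}_q^\times|$.

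For $\beta_\ast$, the point is simply to identify $\beta(c)$ for every cusp $c$ of $X_0(\n)$ and then count the size of each $\beta$-fiber. Using $\beta: \begin{bmatrix} a \\ \p^j \end{bmatrix} \mapsto \begin{bmatrix} a \\ \p^{j-1} \end{bmatrix}$ for $j\geq 2$ and $\mapsto [0]$ for $j\in\{0,1\}$, a direct comparison of moduli gives: for $2\leq j\leq \lfloor r/2\rfloor$ the map is $|\p|$-to-one on height-$\p^j$ cusps (the source modulus $\p^j$ is one power larger than the target modulus $\p^{j-1}$), and for $\lfloor r/2\rfloor+1\leq j\leq r$ it is bijective (both moduli reduce to $\p^{r-j}$). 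The cases $j=0$ and $j=1$ are immediate from the formula for $\beta$, and summing over the fibers yields each stated formula for $\beta_\ast$.

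For $\alpha^\ast$, I would first treat the cases $0 \leq j \leq r-2$. By the $\alpha$ formula, the preimages of a height-$\p^j$ cusp of $X_0(\n/\p)$ are exactly the height-$\p^j$ cusps of $X_0(\n)$. A comparison of moduli shows $\alpha$ is bijective on such cusps when $0\leq j\leq \lfloor (r-1)/2\rfloor$ and $|\p|$-to-one when $\lfloor (r+1)/2\rfloor\leq j\leq r-2$. Since $\deg(\alpha)=|\p|$ and cusps of a given height are $\Gal(\overline{K}/K)$-conjugate and thus share a common ramification index in $\alpha$, in each of these two ranges the ramification index must be $|\p|$ and $1$ respectively, producing the first two pieces of the formula directly.

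The delicate case is $j = r-1$, where the fiber over the unique cusp $[\infty]_{X_0(\n/\p)}$ consists of the $(|\p|-1)/(q-1)$ height-$\p^{r-1}$ cusps of $X_0(\n)$ together with $[\infty]_{X_0(\n)}$. By conjugacy the former all share a common ramification $e_1$; let $e_2$ denote the ramification at $[\infty]_{X_0(\n)}$. The degree equation $e_1\cdot \frac{|\p|-1}{q-1} + e_2 = |\p|$ does not determine the pair uniquely, so I would break the symmetry by proving $e_2 = 1$ via a local analysis at infinity: the stabilizer of $\infty\in\mathbb{P}^1(K)$ equals the upper-triangular subgroup $\Gamma_\infty\subset G(A)$, which lies in both $\Gamma_0(\n)$ and $\Gamma_0(\n/\p)$, so $\alpha$ induces an isomorphism between the $\infty$-ends of $\Gamma_0(\n)\backslash\Tree$ and $\Gamma_0(\n/\p)\backslash\Tree$. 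This local isomorphism forces $\alpha$ to be unramified at $[\infty]_{X_0(\n)}$, whence $e_1 = q-1$ from the degree equation. This local ramification check at the $\infty$-cusp is the one point where some care is needed; once it is in hand, the remainder of the argument is routine counting.
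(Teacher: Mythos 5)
Your proposal is correct, and it takes a genuinely different route from the paper. You prove both parts by a direct geometric count: parametrize the cusps, match up the moduli $\p^{\min(j,\,r-j)}$ on the source with $\p^{\min(j-1,\,r-j)}$ (resp.\ $\p^{\min(j,\,r-1-j)}$) on the target, read off fiber sizes of $\alpha$ and $\beta$ on each height stratum, and for $\alpha^\ast$ convert the fiber count into a ramification index via $\deg\alpha=|\p|$ together with the Galois-conjugacy of cusps of a common height; the only non-routine step is pinning down the ramification of $\alpha$ at $[\infty]$, which you settle by observing that $\Gamma_\infty$ is the stabilizer of $\infty$ in both $\Gamma_0(\n)$ and $\Gamma_0(\n/\p)$, whence $e_2 = 1$ and $e_1 = q-1$ follow from the degree equation. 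The paper instead cites \cite[Lemma 2.1]{ho_rational_2024} for part (1) and proves part (2) via the van der Put map: it uses $\beta_\ast(\widetilde{r}(\Delta_\p)) = |\p|\cdot\widetilde{r}(\Delta)$ (Lemma \ref{small lemma 2}) to force $\beta_\ast(\divisor(\Delta_\p)) = |\p|\cdot\divisor(\Delta)$, compares the explicit divisor coefficients to handle $2\leq j\leq r$, and then extracts the $j=0,1$ cases from a second modular unit $\phi=(\Delta/\Delta_\p)^{|\p|}(\Delta_{\p^2}/\Delta_\p)$ killed by $U_\p$. Your argument is more elementary and self-contained, avoiding any reliance on the explicit $\Delta_d$-divisor formulas and on Lemmas \ref{small lemma 1}--\ref{small lemma 2}, at the modest cost of the one local ramification check at $\infty$ (which the paper sidesteps automatically because the harmonic-cochain identity already encodes it); the paper's approach, conversely, leverages machinery it has already developed and doubles as a consistency check on those divisor computations. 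Both are sound.
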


\begin{proof}
In Example \ref{Ex: U_p action on harm. cochains}, we have the decomposition:
\[\begin{tikzcd}
U_{\p}: \mathcal{H}(\n, \mathbb{Z}) \arrow{r}{\beta_\ast} & \mathcal{H}(\n/\p, \mathbb{Z}) \arrow{r}{\alpha^{\ast}} & \mathcal{H}(\n, \mathbb{Z}) \\
f \arrow[mapsto]{r} & f|U_{\p}\\
& h \arrow[mapsto]{r} & h.
\end{tikzcd}\] The first statement of the lemma follows directly from \cite[Lemma 2.1]{ho_rational_2024}. By Lemma \ref{small lemma 2}, we have 
$\beta_\ast(\widetilde{r}(\Delta_{\p})):=\widetilde{r}(\Delta_{\p})|U_{\p} = |\p|\cdot \widetilde{r}(\Delta)$. Hence, we derive that $\beta_\ast$ maps $\divisor(\Delta_{\p})\in \Div(X_0(\n))$ to $|\p|\cdot\divisor(\Delta)\in \Div(X_0(\n/\p))$. From \cite[Section 2.1]{ho_rational_2024}, the divisor of $\Delta_{\p}$ on $X_0(\n)$ is $$\sum_{j=0}^r a_j\cdot (P(\n)_{\p^j}) := |\p|^{r-1}\cdot (P(\n)_1) + \sum_{j=1}^r \frac{q-1}{\rho(j)}|\p|^{\max\{r-2j+1, 1\}}\cdot (P(\n)_{\p^j}),$$ where $a_j\in \mathbb{N}$ and $$\rho(j)=\begin{cases}
1,  & \text{if $0<j<r$.} \\
q-1,  & \text{otherwise.}
\end{cases}$$ Additionally, the divisor of $\Delta^{|\p|}$ on $X_0(\n/\p)$ is $$\sum_{j=0}^{r-1} b_j\cdot (P(\n/\p)_{\p^j}) := |\p|\cdot (P(\n/\p)_{\p^{r-1}})+\sum_{j=0}^{r-2} \frac{q-1}{\rho(j)}|\p|^{\max\{r-2j, 1\}}\cdot (P(\n/\p)_{\p^j}),$$
where $b_j\in \mathbb{N}$. Now, the result for $2\leq j\leq r$ follows from $\beta_\ast(P(\n)_{\p^j}) = \frac{b_{j-1}}{a_j}\cdot(P(\n/\p)_{\p^{j-1}})$. For $j = 0$ or $1$, we have $\beta_\ast(P(\n)_1) = x\cdot (P(\n/\p)_1)$ and $\beta_{\ast}(P(\n)_{\p}) = y\cdot (P(\n/\p)_1)$ for some $x, y\in \mathbb{N}$. Consider $\widetilde{r}(\phi)\in \mathcal{H}(\n, \mathbb{Z})$ with $$\phi:=\left(\frac{\Delta}{\Delta_\p}\right)^{|\p|}\left(\frac{\Delta_{\p^2}}{\Delta_\p}\right)\in \mathcal{O}(X_0(\n))^\ast.$$ According to Lemma \ref{small lemma 1} and \ref{small lemma 2}, we have $$\beta_\ast(\widetilde{r}(\phi)):=\widetilde{r}(\phi)|U_{\p} = 0.$$ Moreover, referring to \cite[Section 2.1]{ho_rational_2024}, the divisor of $\phi$ on $X_0(\n)$ is given by:
$$(q-1)(|\p|^2-1)|\p|^{r-2}\cdot\left[\frac{|\p|-1}{q-1}\cdot(P(\n)_1) - (P(\n)_\p)\right].$$ Since $\beta_\ast(\divisor(\phi))=0$, we have $y = \frac{|\p|-1}{q-1}\cdot x$. Considering $a_0$, $a_1$, and $b_0$, we find $$|\p|^{r-1}\cdot \beta_{\ast}(P(\n)_1)+(q-1)|\p|^{r-1}\cdot \beta_{\ast}(P(\n)_\p) = |\p|^{r}\cdot (P(\n/\p)_1),$$ which implies that $x=1$ and $y=\frac{|\p|-1}{q-1}$.
\end{proof}

In the following, we prove that $\CC(\p^{r-1})/\beta_{\ast}(\CC(\p^r))$ is $p$-primary.
\begin{lem}[cf. {\cite[Lemma 3.4]{YOO_torsion_2023}}]\label{beta image}
Fix a prime power ideal $\n = \p^r\lhd A$ with $r\geq 2$. Let $\beta_{\ast}: \Div_{\cusp}^0(X_0(\p^r))(K)\rightarrow\Div_{\cusp}^0(X_0(\p^{r-1}))(K)$ be the map induced from Lemma \ref{explicit degeneracy map}. Then $$\Div_{\cusp}^0(X_0(\p^{r-1}))(K)/\beta_{\ast}(\Div_{\cusp}^0(X_0(\p^r))(K))\cong (\mathbb{Z}/|\p|\mathbb{Z})^{\lfloor \frac{r}{2}\rfloor-1}.$$ In particular, we have $$\CC(\p^{r-1})/\beta_{\ast}(\CC(\p^r))\hookrightarrow (\mathbb{Z}/|\p|\mathbb{Z})^{\lfloor \frac{r}{2}\rfloor-1}.$$
\end{lem}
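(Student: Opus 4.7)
The plan is to compute the cokernel first on the full cuspidal divisor groups, then pass to the degree-$0$ subgroups via the snake lemma, and finally descend from divisor groups to class groups by a quotient argument.

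First, I would read the matrix of $\beta_{\ast}$ on $\Div_{\cusp}(X_0(\p^r))(K) = \bigoplus_{j=0}^{r}\mathbb{Z}\cdot(P(\p^r)_{\p^j})$ into $\Div_{\cusp}(X_0(\p^{r-1}))(K) = \bigoplus_{k=0}^{r-1}\mathbb{Z}\cdot(P(\p^{r-1})_{\p^k})$ directly from Lemma \ref{explicit degeneracy map}. Each source generator lands in a single target summand (indexed by $k = j-1$ for $j \geq 2$, with both $j = 0$ and $j = 1$ hitting $k = 0$), so the image decomposes coordinate by coordinate. At $k = 0$, the contributions have coefficients $1$ and $(|\p|-1)/(q-1)$, whose $\gcd$ is $1$, so the full $\mathbb{Z}$ is hit. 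For $1 \leq k \leq \lfloor r/2 \rfloor - 1$, only $j = k+1$ contributes, with coefficient $|\p|$. For $\lfloor r/2 \rfloor \leq k \leq r-1$, only $j = k+1$ contributes, with coefficient $1$. Reading off, the cokernel is exactly $(\mathbb{Z}/|\p|\mathbb{Z})^{\lfloor r/2 \rfloor - 1}$.

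Next, since $\beta: X_0(\p^r) \to X_0(\p^{r-1})$ has degree $|\p|$, pushforward preserves the degree of a divisor. Hence the short exact sequences
\[
0 \to \Div_{\cusp}^0(X_0(\p^s))(K) \to \Div_{\cusp}(X_0(\p^s))(K) \xrightarrow{\deg} \mathbb{Z} \to 0
\]
for $s \in \{r, r-1\}$ fit into a commutative diagram under $\beta_{\ast}$ whose rightmost column is the identity on $\mathbb{Z}$. The snake lemma then gives $\operatorname{coker}(\beta_{\ast}|_{\Div^0}) \cong \operatorname{coker}(\beta_{\ast}) \cong (\mathbb{Z}/|\p|\mathbb{Z})^{\lfloor r/2 \rfloor - 1}$, which is the first claim.

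For the ``in particular'' statement, $\beta_{\ast}$ sends $\divisor(f)$ to $\divisor(\operatorname{Nm}_{\beta}(f))$, and the norm of a modular unit is again a modular unit (it has neither zeros nor poles on $Y_0(\p^{r-1})$), so $\beta_{\ast}$ descends to a map $\CC(\p^r) \to \CC(\p^{r-1})$. The resulting commutative square with surjective vertical quotient maps yields a surjection $\operatorname{coker}(\beta_{\ast}|_{\Div^0}) \twoheadrightarrow \CC(\p^{r-1})/\beta_{\ast}(\CC(\p^r))$, so the right-hand side is a quotient of $(\mathbb{Z}/|\p|\mathbb{Z})^{\lfloor r/2 \rfloor - 1}$. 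Because $|\p|$ is a prime power, any quotient of $(\mathbb{Z}/|\p|\mathbb{Z})^n$ is an abelian $p$-group with exponent dividing $|\p|$ generated by at most $n$ elements, and hence embeds into $(\mathbb{Z}/|\p|\mathbb{Z})^n$, yielding the claimed injection. The only substantive point requiring care is the index bookkeeping in the first step: one must check that the three regimes of Lemma \ref{explicit degeneracy map} partition $\{0,\ldots,r\}$ consistently with the transition at $k = \lfloor r/2 \rfloor$; once that is set up, the cokernel reads off by inspection.
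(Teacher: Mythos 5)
Your proposal is correct, and the core computation is the same one the paper makes: both read off the matrix of $\beta_\ast$ from Lemma~\ref{explicit degeneracy map} and observe that the only nontrivial diagonal entries are $|\p|$ in positions $k=1,\dots,\lfloor r/2\rfloor-1$. The organizational difference is that the paper works directly on the degree-$0$ subgroups, using the generators $C_i = (P(\n)_{\p^i}) - \deg(P(\n)_{\p^i})\cdot[\infty]$ and the fact that $\beta_\ast$ carries $C_j$ to a scalar multiple of $C_{j-1}'$ (which requires noting $\beta_\ast[\infty]=[\infty]'$ and that $\beta_\ast$ respects degrees), whereas you first compute the cokernel on the full cuspidal divisor groups, where the matrix is cleaner, and then descend to degree $0$ via the snake lemma applied to the degree short exact sequences. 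Both routes work, and yours trades a small amount of generator bookkeeping for an extra diagram. You also spell out the passage from divisor groups to class groups (norms of modular units are modular units, so $\beta_\ast$ descends, and a quotient of $(\mathbb{Z}/|\p|\mathbb{Z})^n$ always embeds back into $(\mathbb{Z}/|\p|\mathbb{Z})^n$), which the paper leaves implicit. One small phrasing slip: ``since $\beta$ has degree $|\p|$, pushforward preserves degree'' is a non-sequitur — pushforward of divisors preserves degree for any finite morphism of curves, independent of $\deg\beta$; it is pullback that multiplies degrees by $\deg\beta$. This does not affect the argument.
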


\begin{proof}
Let $C_i := (P(\n)_{\p^i})-\deg(P(\n)_{\p^i})\cdot[\infty]\in \Div_{\cusp}^0(X_0(\n))(K)$ for $0\leq i \leq r-1$, and $C_i' := (P(\n/\p)_{\p^i})-\deg(P(\n/\p)_{\p^i})\cdot[\infty]\in \Div_{\cusp}^0(X_0(\n/\p))(K)$. Then $\Div_{\cusp}^0(X_0(\n))(K)$ is generated by $\{C_0,C_1,\cdots,C_{r-1}\}$. By Lemma \ref{explicit degeneracy map}, we have
\begin{align*}
&\beta_{\ast}(\Div_{\cusp}^0(X_0(\n))(K))\\
&=\langle\beta_{\ast}(C_0),\beta_{\ast}(C_1),\beta_{\ast}(C_2),\cdots,\beta_{\ast}(C_{r-1})\rangle\\
&= \langle C_0',\frac{|\p|-1}{q-1}C_0',|\p|C_1',\cdots,|\p|C_{\lfloor\frac{r}{2}\rfloor-1}', C_{\lfloor\frac{r}{2}\rfloor}',\cdots,C_{r-2}'\rangle\\
&\subset \Div_{\cusp}^0(X_0(\n/\p))(K)\\
&= \langle C_0',C_1',\cdots,C_{\lfloor \frac{r}{2}\rfloor-1}', C_{\lfloor \frac{r}{2}\rfloor}',\cdots,C_{r-2}'\rangle,
\end{align*}
which implies that $$\Div_{\cusp}^0(X_0(\n/\p))(K)/\beta_{\ast}(\Div_{\cusp}^0(X_0(\n))(K))\cong (\mathbb{Z}/|\p|\mathbb{Z})^{\lfloor \frac{r}{2}\rfloor-1}.$$
\end{proof}

\section{Proof of the Main Theorem}\label{section: main thm}
Fix a prime power ideal $\n=\p^r\lhd A$ with $r\geq 2$. We prove Theorem \ref{Main Theorem} by induction on $r$. It is known that $\CC(\p) = \TT(\p)$. Assume that $\CC(\p^{r-1})_{\ell} = \TT(\p^{r-1})_{\ell}$ for a prime $\ell\nmid q(q-1)$. We want to show that $\CC(\p^r)_{\ell} = \TT(\p^r)_{\ell}$. Consider the following diagram:
\[\begin{tikzcd}
0\arrow{r} &\CC(\p^r)_{\ell}[U_\p]\arrow[hook]{r}\arrow[hook]{d} &\CC(\p^r)_{\ell}\arrow[two heads]{r}{U_{\p}}\arrow[hook]{d} &U_{\p}(\CC(\p^r))_{\ell}\arrow{r}\arrow[hook]{d} &0\\
0\arrow{r} &\TT(\p^r)_{\ell}[U_\p]\arrow[hook]{r} &\TT(\p^r)_{\ell}\arrow[two heads]{r}{U_{\p}}\arrow{rd}[sloped, swap]{(\beta_{\p})_{\ast}} &U_{\p}(\TT(\p^r))_{\ell}\arrow{r} &0\\
&&&(\beta_{\p})_{\ast}(\TT(\p^{r}))_{\ell}\arrow{u}{\alpha_{\p}^{\ast}}\arrow[r, phantom, sloped, "\subseteq"] &\TT(\p^{r-1})_{\ell}.
\end{tikzcd}\]
We call $U_{\p}(\TT(\p^r))_{\ell}\subset \TT(\p^r)_{\ell}$ the old part of $\TT(\p^r)_{\ell}$. By Lemma \ref{beta image} and induction hypothesis, we have 
$$(\beta_{\p})_\ast(\CC(\p^r))_\ell = \CC(\p^{r-1})_\ell = \TT(\p^{r-1})_\ell\supseteq (\beta_{\p})_\ast(\TT(\p^r))_\ell.$$ Thus, we have $U_\p(\CC(\p^r))_\ell\supseteq U_{\p}(\TT(\p^r))_\ell$. Moreover, since $\CC(\p^r)\subseteq \TT(\p^r)$, we have $U_\p(\CC(\p^r))_\ell = U_{\p}(\TT(\p^r))_\ell$. Note that for any prime $\ell\neq p$,
$$U_\p(\CC(\p^r))_\ell\cong\CC(\p^{r-1})_\ell
\cong\left(\frac{\mathbb{Z}_\ell}{M(\p)\mathbb{Z}_\ell}\right)^{r-2}\times\frac{\mathbb{Z}_\ell}{N(\p)\mathbb{Z}_\ell},
$$
where $M(\p) := \frac{|\p|^2-1}{q^2-1}$ and 
$$N(\p) := \begin{cases}
\frac{|\p|-1}{q^2-1},  & \text{if $\deg(\p)$ is even.} \\
\frac{|\p|-1}{q-1},  & \text{otherwise.}
\end{cases}$$
Now, we look at the new part $\TT(\p^r)_{\ell}[U_\p]$ of $\TT(\p^r)_{\ell}$. Recall that $(P(\n)_{\p})$ is the sum of all the cusps of $X_0(\n)$ of height $\p$. Define $\overline{C'}\in \CC(\p^r)$ by 
\begin{equation} \label{eq: C'}
C' := \frac{|\p|-1}{q-1} \cdot [0] - (P(\n)_\p) \in \Div_{\cusp}^0(X_0(\n))(K).
\end{equation}
By Lemma \ref{explicit degeneracy map}, $\overline{C'}$ is annihilated by $U_\p$. Recall the map $g: \Div_{\cusp}^0(X_0(\n))(K) \rightarrow\displaystyle\mathcal{O}(X_0(\n))^\ast\tens_{\mathbb{Z}}\mathbb{Q}$ in Section \ref{Section: The rational cuspidal divisor class group}. We have 
\begin{equation}\label{eq: g(C')}
g(C')=\left(\left(\frac{\Delta}{\Delta_{\p}}\right)^{|\p|}\left(\frac{\Delta_{\p^2}}{\Delta_\p}\right)\right)\otimes \frac{1}{(q-1)(|\p|^2-1)|\p|^{r-2}}.
\end{equation}
From the above and \cite[Lemma 3.6]{ho_rational_2024}, the order of $\overline{C'}$ in $\CC(\p^r)$ divides $M(\p)|\p|^{r-2}$. Since $\frac{\Delta_{\p^2}}{\Delta_\p}$ does not have a $p$-th root in $\mathcal{O}(\Omega)^\ast$ by \cite[Corollary 3.18]{gekeler_1997}, we see that $|\p|^{r-2}$ divides $\ord(\overline{C'})$; cf. \cite[Lemma 3.3]{ho_rational_2024}. Let $e := \begin{pmatrix}
\pi_{\infty}^{2} & \pi_{\infty}\\
0 & 1
\end{pmatrix} \in E(\Tree)$; cf. \cite[Section 2.2]{ho_rational_2024}. By \cite[Lemma 2.4]{ho_rational_2024}, we compute that $$|\p|^{r-2}\cdot(\widetilde{r}\circ g(C'))(e) = -\frac{|\p|}{q}\cdot\frac{1}{M(\p)},$$ which implies that $M(\p)$ also divides $\ord(\overline{C'})$; cf. \cite[Lemma 3.3]{ho_rational_2024}. Then we conclude that $\ord(\overline{C'}) = M(\p)|\p|^{r-2}$.

\begin{rem}
Combining the above with Theorem \ref{The structure of C(p^r) in function field case} and Lemma \ref{beta image}, the prime-to-$p$ part of $\CC(\p^r)[U_\p]$ is cyclic and generated by $|\p|^{r-2}\cdot\overline{C'}$ of order $M(\p)$. In contrast, the $p$-part of $\CC(\p^r)[U_\p]$ may not be cyclic.
\end{rem}

Recall the following:
\begin{lem}[{\cite[Lemma 7.2]{papikian_eisenstein_2015}}] \label{injectivity of specialization map}
Suppose that $\ell$ is a prime not dividing $q(q-1)$. There is a natural injective homomorphism $\TT(\n)_\ell\hookrightarrow \mathcal{E}_{00}(\n, \mathbb{Z}/\ell^k\mathbb{Z})$ for any $k\in\mathbb{Z}_{\geq 0}$ with $\ell^k\geq \#(\Phi_{\infty, \ell})$.
\end{lem}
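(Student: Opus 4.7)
The plan is to factor the desired injection through the canonical specialization map $\wp_\infty: J_0(\n)(K) \to \Phi_\infty$ at the infinite place, exploiting that $J_0(\n)$ has split toric reduction there. The argument breaks into three assertions: injectivity of $\wp_\infty$ on $\TT(\n)_\ell$; a Hecke-equivariant injection $\Phi_{\infty, \ell} \hookrightarrow \mathcal{H}_{00}(\n, \mathbb{Z}/\ell^k\mathbb{Z})$ once $\ell^k \geq \#\Phi_{\infty, \ell}$; and the Eisenstein property of $\TT(\n)_\ell$ itself, which forces the image to land in $\mathcal{E}_{00}$.

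For the first, standard N\'eron model theory gives that the specialization $J_0(\n)(K)[\ell^\infty] \hookrightarrow \mathcal{J}_{\mathbb{F}_\infty}(\mathbb{F}_q)[\ell^\infty]$ is injective for $\ell \neq p$. The kernel of the further projection $\mathcal{J}_{\mathbb{F}_\infty}(\mathbb{F}_q) \twoheadrightarrow \Phi_\infty(\mathbb{F}_q)$ is $\mathcal{J}^0_{\mathbb{F}_\infty}(\mathbb{F}_q)$; since the toric reduction at $\infty$ is split, this group is a product of copies of $\mathbb{F}_q^\times$, whose $\ell$-part vanishes exactly when $\ell \nmid q-1$. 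Combined with $\ell \neq p$, this yields $\TT(\n)_\ell \hookrightarrow \Phi_{\infty, \ell}$ whenever $\ell \nmid q(q-1)$.

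For the second, I would invoke Grothendieck's monodromy pairing for the Drinfeld modular Jacobian: the character group of the split torus $\mathcal{J}^0_{\mathbb{F}_\infty}$ is canonically and Hecke-equivariantly identified with $\mathcal{H}_0(\n, \mathbb{Z})$ via Gekeler's analysis of the special fibre of $X_0(\n)$ at $\infty$, and the pairing produces a Hecke-equivariant short exact sequence $0 \to \mathcal{H}_0(\n, \mathbb{Z}) \to \mathcal{H}_0(\n, \mathbb{Z})^{\vee} \to \Phi_\infty \to 0$. Tensoring with $\mathbb{Z}/\ell^k\mathbb{Z}$ and unwinding the associated $\mathrm{Tor}$ sequence, the assumption $\ell^k \geq \#\Phi_{\infty, \ell}$ produces an injection $\Phi_{\infty, \ell} \hookrightarrow \mathcal{H}_0(\n, \mathbb{Z})/\ell^k \mathcal{H}_0(\n, \mathbb{Z}) = \mathcal{H}_{00}(\n, \mathbb{Z}/\ell^k\mathbb{Z})$.

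For the third, by the Eichler--Shimura congruence for Drinfeld modular curves, for any prime $\q \nmid \n \cdot \infty$ the geometric Frobenius $\mathrm{Frob}_\q$ on the $\ell$-adic Tate module of $J_0(\n)$ satisfies $\mathrm{Frob}_\q^2 - T_\q\,\mathrm{Frob}_\q + |\q| = 0$. A $K$-rational torsion point is fixed by every such $\mathrm{Frob}_\q$, so $T_\q$ acts on it as $|\q| + 1$; thus $\TT(\n)_\ell$ is annihilated by $\mathfrak{E}(\n)$, and composing with the first two steps yields the sought injection into $\mathcal{E}_{00}(\n, \mathbb{Z}/\ell^k\mathbb{Z})$. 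The main obstacle is the second step: one must set up Gekeler's identification of the character lattice with $\mathcal{H}_0(\n, \mathbb{Z})$ and verify Hecke-equivariance of the monodromy exact sequence in the function field setting, following SGA~7 and the Drinfeld/Gekeler analogue for $X_0(\n)$; granted those standard inputs, the rest is routine.
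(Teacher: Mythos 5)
Your proposal correctly unpacks the same chain $\TT(\n)_\ell \hookrightarrow (\Phi_\infty)_\ell[\mathfrak{E}(\n)] \hookrightarrow \mathcal{E}_{00}(\n, \mathbb{Z}/\ell^k\mathbb{Z})$ that the paper records in its one-line proof sketch citing \cite[Lemma 7.2]{papikian_eisenstein_2015}. The three ingredients you isolate — N\'eron-model injectivity on $\ell$-torsion combined with split toric reduction at $\infty$ and $\ell\nmid q(q-1)$, the Grothendieck monodromy exact sequence together with Gekeler's Hecke-equivariant identification of the character group with $\mathcal{H}_0(\n,\mathbb{Z})$, and the Eichler--Shimura relation giving the Eisenstein property of rational torsion — are precisely the content of the cited result, so your argument matches the paper's approach.
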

\begin{proof}
The idea in the proof of {\cite[Lemma 7.2]{papikian_eisenstein_2015}} is $$\TT(\n)_\ell\hookrightarrow (\Phi_{\infty})_\ell[\mathfrak{E}(\n)]\hookrightarrow \mathcal{E}_{00}(\n, \mathbb{Z}/\ell^k\mathbb{Z})$$ when $\ell^k\cdot (\Phi_{\infty})_\ell = 0$ and $\ell\nmid q(q-1)$.
\end{proof}
From the above lemma, we obtain an injective $\mathbb{T}(\n)$-equivariant map:
\begin{equation} \label{eq: main sequence}
\frac{\mathbb{Z}_\ell}{M(\p)\mathbb{Z}_\ell}\cong\langle\overline{C'}\rangle_\ell\subseteq \CC(\n)_\ell[U_\p]\subseteq \TT(\n)_\ell[U_\p]\hookrightarrow \mathcal{E}_{00}(\n, \mathbb{Z}/\ell^k\mathbb{Z})[U_\p]
\end{equation}
for a prime $\ell\nmid q(q-1)$ and large enough $k$. We claim that $$\mathcal{E}_{00}(\n, \mathbb{Z}/\ell^k\mathbb{Z})[U_\p]\hookrightarrow \frac{\mathbb{Z}_\ell}{M(\p)\mathbb{Z}_\ell}.$$ Assume the above, then we have $$\CC(\p^r)_\ell[U_\p] = \TT(\p^r)_\ell[U_\p],$$ which is the new part of $\TT(\p^r)_{\ell}$. Combining the new and the old parts of $\TT(\p^r)_\ell$, we obtain that $\CC(\p^r)_\ell = \TT(\p^r)_\ell$ for any prime $\ell\nmid q(q-1)$.

To demonstrate the claim, we consider 
\begin{equation} \label{eq: E_n}
E_{\n}:=\frac{1}{(q-1)(q^2-1)}\cdot \widetilde{r}\left(\left(\frac{\Delta}{\Delta_\p}\right)^{|\p|}\left(\frac{\Delta_{\p^2}}{\Delta_\p}\right)\right)\in \mathcal{H}(\n, \mathbb{Q}).
\end{equation}
Recall that the order of $\overline{C'}$ in $\CC(\p^r)$ is $M(\p)|\p|^{r-2}$. Then by Theorem \ref{thm: van der Put map} and Equation (\ref{eq: g(C')}), we have $E_{\n} = M(\p)|\p|^{r-2}\cdot(\widetilde{r}\circ g(C'))\in \mathcal{H}(\n, \mathbb{Z})$.

\begin{lem}\label{E_n is annihilated by E(n) and U_p}
$E_{\n}\in \mathcal{E}(\n,\mathbb{Z})[U_{\p}]$.
\end{lem}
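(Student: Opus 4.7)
The plan is to unwind the definition of $E_{\n}$ and reduce everything to the Hecke eigenvalue identities already proved in Lemma \ref{small lemma 1} and Lemma \ref{small lemma 2}. Since the van der Put map $\widetilde{r}$ is a homomorphism from the multiplicative group of modular units to the additive group of harmonic cochains, I would first rewrite the expression inside the parentheses as $\Delta^{|\p|} \cdot \Delta_{\p}^{-(|\p|+1)} \cdot \Delta_{\p^2}$, so that
\[ (q-1)(q^2-1)\cdot E_{\n} \;=\; |\p|\cdot \widetilde{r}(\Delta) \;-\; (|\p|+1)\cdot \widetilde{r}(\Delta_{\p}) \;+\; \widetilde{r}(\Delta_{\p^2}). \]
Integrality and $\Gamma_0(\n)$-invariance have already been noted just above the lemma via the identity $E_{\n} = M(\p)|\p|^{r-2}\cdot(\widetilde{r}\circ g(C'))$, so the real content of the lemma is that $E_{\n}$ is simultaneously annihilated by $T_{\q}-|\q|-1$ for every prime $\q\nmid \n$ and by $U_{\p}$.

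For the Eisenstein ideal, I would argue that since $\n=\p^r$, the condition $\q \nmid \n$ is equivalent to $\q\neq \p$. By Lemma \ref{small lemma 1}(1) and Lemma \ref{small lemma 2}(1), each of $\widetilde{r}(\Delta)$, $\widetilde{r}(\Delta_{\p})$, and $\widetilde{r}(\Delta_{\p^2})$ is a $T_{\q}$-eigenfunction with common eigenvalue $|\q|+1$; by $\mathbb{Z}$-linearity the same holds for $E_{\n}$, and therefore $E_{\n}\cdot(T_{\q}-|\q|-1) = 0$ for every such $\q$. This disposes of $\mathfrak{E}(\n)$.

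The remaining and only real computation is the vanishing of $E_{\n}|U_{\p}$. Here the point is that the coefficients $|\p|,-(|\p|+1),1$ in the expression for $E_{\n}$ are tuned precisely so that the combination lies in the $U_{\p}$-kernel. Applying Lemma \ref{small lemma 1}(2) and Lemma \ref{small lemma 2}(2) gives
\[
\widetilde{r}(\Delta)|U_{\p}=(|\p|+1)\widetilde{r}(\Delta)-\widetilde{r}(\Delta_{\p}),\quad \widetilde{r}(\Delta_{\p})|U_{\p}=|\p|\widetilde{r}(\Delta),\quad \widetilde{r}(\Delta_{\p^2})|U_{\p}=|\p|\widetilde{r}(\Delta_{\p}),
\]
and substituting yields that the coefficient of $\widetilde{r}(\Delta)$ becomes $|\p|(|\p|+1)-(|\p|+1)|\p|=0$ and the coefficient of $\widetilde{r}(\Delta_{\p})$ becomes $-|\p|+|\p|=0$, so $E_{\n}|U_{\p}=0$.

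I do not anticipate any real obstacle, since the proof is a direct verification once the identity for $\widetilde{r}(\phi)$ is expanded. The only conceptual point worth noting is that this cancellation is the harmonic-cochain shadow of the fact, already used in the construction of $C'$ via Lemma \ref{explicit degeneracy map}, that $\overline{C'}$ is annihilated by $U_{\p}$ in $\CC(\n)$; the two statements are in fact equivalent modulo Theorem \ref{thm: van der Put map} and the map $g$ from Section \ref{Section: The rational cuspidal divisor class group}.
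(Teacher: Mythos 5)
Your proposal is correct and follows essentially the same approach as the paper: both rely on the eigenvalue identities of Lemmas \ref{small lemma 1} and \ref{small lemma 2} applied to the decomposition $(q-1)(q^2-1)E_{\n}=|\p|\widetilde{r}(\Delta)-(|\p|+1)\widetilde{r}(\Delta_{\p})+\widetilde{r}(\Delta_{\p^2})$. The paper's own proof is terser and merely cites the two lemmas; you have correctly filled in the explicit cancellation in the $U_{\p}$ computation, which is the only step that requires arithmetic.
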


\begin{proof}
By Lemma \ref{small lemma 1} and \ref{small lemma 2}, for any prime $\q\lhd A$ not equal to $\p$, we have $E_{\n}|T_{\q}=(|\q|+1)E_{\n}$ and $E_{\n}|U_{\p} = 0$. Hence, $E_{\n}$ is annihilated by both the actions of the Eisenstein ideal $\mathfrak{E}(\n)$ and the Hecke operator $U_{\p}$.
\end{proof}

From now on, we always assume that $R$ is a coefficient ring. By composing with the map $\mathbb{Z}\rightarrow R$ sending $z$ to $z\cdot 1$, we also consider $E_{\n}$ as an element of $\mathcal{E}(\n, R)[U_{\p}]$. We claim that $E_{\n}$ generates $\mathcal{E}(\n, R)[U_{\p}]$ over $R$. Before we prove the claim, we recall the following:
\begin{lem}[{\cite[Lemma 2.15]{papikian_rational_2017}}]\label{lemma: eigenfunction}
Assume that $f\in \mathcal{H}(\n,R)$ is an eigenfunction of all $T_{\q}$, $\q\nmid \n$; that is, $f|T_{\q}=\lambda_{\q}f$ for some $\lambda_{\q}\in R$. Then the Fourier coefficients $f^{\ast}(\m)$, with $\m$ coprime to $\n$, are uniquely determined by $f^{\ast}(1)$ and the eigenvalues $\lambda_{\q}$, $\q\nmid \n$.
\end{lem}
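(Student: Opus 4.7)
The plan is to establish a clean formula for the action of $T_{\q}$ on Fourier coefficients when $\q\nmid\n$, and then combine it with the eigenvalue relation $f|T_{\q}=\lambda_{\q}f$ to produce a recursion that determines every $f^{\ast}(\m)$ (with $\m$ coprime to $\n$) from $f^{\ast}(1)$ and the eigenvalues $\lambda_{\q}$.

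For the first step, I would decompose $T_{\q}=B_{\q}+U_{\q}$, where $B_{\q}$ is the action of $\begin{pmatrix}\q & 0\\ 0 & 1\end{pmatrix}$ and $U_{\q}$ is the sum of the actions of $\begin{pmatrix}1 & b\\ 0 & \q\end{pmatrix}$ for $b\in A$ with $\deg b<\deg\q$; this decomposition is legal here because $\q\nmid\n$ forces $(\q)+\n=A$, so the matrix with $a=\q$ is admitted in the sum defining $T_{\q}$. The lemma on $f|B_{\m}$ recalled just before Lemma \ref{lem: F. coeff. of Delta_d} immediately gives $(f|B_{\q})^{\ast}(\m)=f^{\ast}(\m/\q)$. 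For the $U_{\q}$-piece, I would substitute into the definition of the Fourier coefficient, push the $b$-summation inside, and absorb $b$ into the integration variable $u$; the enlarged $u$-range combined with character orthogonality of $\eta$ then recombines into the defining expression for $f^{\ast}(\q\m)$. Adding the two contributions yields
\[(f|T_{\q})^{\ast}(\m)=f^{\ast}(\q\m)+|\q|\cdot f^{\ast}(\m/\q),\]
with the convention that $f^{\ast}(\m/\q)=0$ when $\q\nmid\m$.

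Applying $f|T_{\q}=\lambda_{\q}f$ gives $\lambda_{\q}f^{\ast}(\m)=f^{\ast}(\q\m)+|\q|\cdot f^{\ast}(\m/\q)$. Setting $\m=1$ gives $f^{\ast}(\q)=\lambda_{\q}f^{\ast}(1)$. For a general non-negative divisor $\m$ coprime to $\n$, factor its underlying monic polynomial as $\prod_{i}\q_{i}^{k_{i}}$ and induct on the total exponent $s:=\sum_{i}k_{i}$. If $s\geq 1$, choose any $j$ with $k_{j}\geq 1$, set $\m':=\m/\q_{j}$, and rearrange the displayed relation to obtain
\[f^{\ast}(\m)=\lambda_{\q_{j}}\cdot f^{\ast}(\m')-|\q_{j}|\cdot f^{\ast}(\m/\q_{j}^{2});\]
both terms on the right are determined by the inductive hypothesis (the second term is $0$ if $k_{j}=1$), completing the induction.

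The main obstacle is the direct computation $(f|U_{\q})^{\ast}(\m)=f^{\ast}(\q\m)$. One must carefully track the normalization $q^{-1-\deg\m}$, the indexing set $(\pi_{\infty})/(\pi_{\infty}^{2+\deg\m})$, and the character values produced by the matrix product $\begin{pmatrix}\pi_{\infty}^{2+\deg\m} & u\\ 0 & 1\end{pmatrix}\begin{pmatrix}1 & b\\ 0 & \q\end{pmatrix}$ (after rescaling to the shape appearing in the Fourier coefficient definition), and verify that summing over $b$ with $\deg b<\deg\q$ combines with the $u$-range to reproduce exactly the defining sum for $f^{\ast}(\q\m)$, with no surviving off-diagonal contributions. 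Once this single identity is secured, the rest of the argument is just bookkeeping on the recursion above.
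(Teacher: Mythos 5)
Your overall strategy matches the paper's in spirit: the paper's proof is a two-line citation, combining the recursive decomposition of $T_{\m}$ into prime Hecke operators (Proposition \ref{decomp. of Hecke operators}) with the identity $(f|T_{\m})^{\ast}(1)=|\m|f^{\ast}(\m)$ from P\'al, and your proposal simply unpacks the same mechanism into a coefficient-by-coefficient recursion. The one genuine error is in your key displayed formula
\[
(f|T_{\q})^{\ast}(\m)=f^{\ast}(\q\m)+|\q|\cdot f^{\ast}(\m/\q),
\]
which places the factor $|\q|$ on the wrong term. With Gekeler's normalization of Fourier coefficients (the $q^{-1-\deg(\m)}$ factor in the definition of $f^{\ast}(\m)$), the correct identity is $(f|T_{\q})^{\ast}(\m)=|\q|\,f^{\ast}(\q\m)+f^{\ast}(\m/\q)$. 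You can see your version must be wrong by setting $\m=1$: it would give $(f|T_{\q})^{\ast}(1)=f^{\ast}(\q)$, contradicting the identity $(f|T_{\q})^{\ast}(1)=|\q|f^{\ast}(\q)$ that the paper cites (and that you would in any case need to be consistent with). The slip is understandable—you are importing the classical-modular-forms recursion $a_n(T_pf)=a_{pn}+p\,a_{n/p}$, which corresponds to a different normalization of Fourier coefficients; the degree-dependent prefactor here moves the $|\q|$ to the other side. When you recompute $(f|U_{\q})^{\ast}(\m)$ as you outline, the count of the $u$- and $b$-sums is $q^{1+\deg(\m)+\deg(\q)}$, but the normalizing prefactor in $f^{\ast}(\q\m)$ is $q^{-1-\deg(\m)-\deg(\q)}$, so the mismatch of $q^{\deg(\q)}=|\q|$ survives and multiplies $f^{\ast}(\q\m)$, not $f^{\ast}(\m/\q)$.

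This is not merely cosmetic: with the corrected identity, the eigenvalue relation gives $f^{\ast}(\q\m)=|\q|^{-1}\bigl(\lambda_{\q}f^{\ast}(\m)-f^{\ast}(\m/\q)\bigr)$, so your induction on the total exponent must divide by $|\q|$ at each step. Your version, had it been correct, would have avoided any division and thus ``proved'' the lemma over an arbitrary ring; the corrected version genuinely needs $|\q|\in R^{\ast}$, which here is guaranteed because $R$ is $\mathbb{C}$ or a coefficient ring with $1/p\in R$ (this is the same hypothesis under which the Fourier coefficients, with their $q^{-1-\deg(\m)}$ prefactor, are even defined). Once you repair the formula and note the invertibility, the induction goes through as you describe and you recover the paper's conclusion.
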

\begin{proof}
This follows from Proposition \ref{decomp. of Hecke operators} and the fact that $(f| T_{\m})^\ast(1) = |\m|f^\ast(\m)$; cf. \cite[Lemma 3.2]{Pal2005}.
\end{proof}
We provide an Atkin-Lehner type result similar to \cite[Theorem 2.18]{papikian_rational_2017}:
\begin{thm}\label{Atkin-Lehner type result}
Let $\n=\p^r$ with a prime $\p\lhd A$ and $r\geq 2$. Suppose that $f\in \mathcal{H}(\n,R)$ is such that $f^\ast(\m)=0$ unless $\p$ divides $\m$. Then there exists $h\in \mathcal{H}(\n/\p,R)$ such that $f = h|B_{\p}$.
\end{thm}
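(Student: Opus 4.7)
The plan is to exhibit $h$ explicitly using the Hecke operator $U_{\p}$ and then to verify the identity $h|B_{\p} = f$ by comparing Fourier coefficients and invoking the uniqueness clause of the Fourier expansion theorem. Since $R$ is a coefficient ring (or $\mathbb{C}$), the norm $|\p| = q^{\deg\p}$ is a power of the residue characteristic and hence a unit in $R$. I would define
\[
h := |\p|^{-1}\cdot (f|U_{\p}).
\]
Because $\p^{2}\mid \n$, the example on the $U_{\p}$-action on harmonic cochains gives $f|U_{\p}\in \mathcal{H}(\n/\p, R)$, hence $h\in \mathcal{H}(\n/\p, R)$.

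The first step is to compute the Fourier coefficients of $f|U_{\p}$. Starting from $f|U_{\p} = \sum_{b\in A,\, \deg b<\deg\p} f|\begin{pmatrix}1 & b\\ 0 & \p\end{pmatrix}$ applied to the edge $\begin{pmatrix}\pi_\infty^{k} & y\\ 0 & 1\end{pmatrix}$, I would absorb $\p^{-1}$ into the scalar $Z$ and a right-multiplication by an $\mathcal{I}$-unit to obtain a sum of values of $f$ on matrices of the form $\begin{pmatrix}\pi_\infty^{k+\deg\p} & (y+b)/\p\\ 0 & 1\end{pmatrix}$. Substituting the Fourier expansion of $f$ and using the identity
\[
\sum_{b\in A,\, \deg b<\deg\p}\eta(mb/\p) = \begin{cases} |\p| & \text{if } \p\mid m,\\ 0 & \text{otherwise}, \end{cases}
\]
(which follows from $\eta|_{A}=1$ and the non-degeneracy of the residue pairing $(r,b)\mapsto \eta(rb/\p)$ on polynomials of degree $<\deg\p$), only the Fourier coefficients indexed by multiples of $\p$ survive. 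Character orthogonality in $u$ then kills the non-constant remainders in the constant-term computation, and an analogous computation yields the other coefficient, so that
\[
(f|U_{\p})^{*}(\n') = |\p|\cdot f^{*}(\p\n'),\qquad (f|U_{\p})^{0}(\pi_\infty^{k}) = |\p|\cdot f^{0}(\pi_\infty^{k+\deg\p}).
\]
Dividing by $|\p|$ gives $h^{*}(\n') = f^{*}(\p\n')$ and $h^{0}(\pi_\infty^{k}) = f^{0}(\pi_\infty^{k+\deg\p})$.

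Second, applying the lemma $(g|B_{\p})^{*}(\m) = g^{*}(\m/\p)$ and $(g|B_{\p})^{0}(\pi_\infty^{k}) = g^{0}(\pi_\infty^{k-\deg\p})$ with $g=h$ gives
\[
(h|B_{\p})^{*}(\m) = \begin{cases} f^{*}(\m) & \text{if } \p\mid \m,\\ 0 & \text{if } \p\nmid \m, \end{cases} \qquad (h|B_{\p})^{0}(\pi_\infty^{k}) = f^{0}(\pi_\infty^{k}).
\]
The hypothesis $f^{*}(\m)=0$ for $\p\nmid \m$ makes the Fourier coefficients of $h|B_{\p}$ agree with those of $f$.

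Finally, both $f$ and $h|B_{\p}$ are $\Gamma_\infty$-invariant harmonic cochains on $\Tree$: $f$ trivially since $\Gamma_\infty\subset\Gamma_0(\n)$, and $h|B_{\p}$ because for $\sigma\in\Gamma_\infty$ the conjugate $B_{\p}\sigma B_{\p}^{-1}$ merely multiplies the upper-right entry of $\sigma$ by $\p$, hence remains in $\Gamma_\infty$, giving $h|(B_{\p}\sigma B_{\p}^{-1}) = h$. By the uniqueness clause of the Fourier expansion theorem, $h|B_{\p} = f$ on all of $E(\Tree)$, as desired. The main obstacle is the first step: the Fourier coefficient calculation requires careful bookkeeping of scalar factors when reducing matrices modulo $\mathcal{I}Z$ and relies on the non-degeneracy of the character sum $\sum_{b}\eta(mb/\p)$, which is the technical heart of the argument.
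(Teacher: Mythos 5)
Your proof is correct, but it takes a genuinely different route from the paper's. The paper defines $h := f|B_{\p}^{-1}$, so that $h|B_{\p}=f$ holds by construction, and then works to establish $h\in\mathcal{H}(\n/\p,R)$: the $\Gamma_0(\n/\p,\p)$-invariance is imported from \cite[Lemma 2.13]{papikian_rational_2017} (a conjugation argument $B_{\p}^{-1}\Gamma_0(\n/\p,\p)B_{\p}\subseteq\Gamma_0(\n)$), and the remaining $\Gamma_\infty$-invariance is then extracted from the Fourier hypothesis by expanding $f$ directly. You instead define $h := |\p|^{-1}(f|U_{\p})$, which already lies in $\mathcal{H}(\n/\p,R)$ by \cite[Lemma 2.6]{papikian_eisenstein_2016} (the $\p^2\mid\n$ case quoted in the paper's example), and you locate the work in verifying $h|B_{\p}=f$ via a comparison of Fourier coefficients — the hypothesis $f^{\ast}(\m)=0$ for $\p\nmid\m$ enters precisely when you match $(h|B_{\p})^{\ast}$ to $f^{\ast}$. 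A posteriori the two constructions produce the same $h$ (since $g\mapsto g|B_{\p}$ is injective), so they are equivalent, but the "hard step'' lives in different places: membership in $\mathcal{H}(\n/\p,R)$ for the paper, the identity $h|B_{\p}=f$ for you. Your version is a bit more self-contained in that it bypasses the auxiliary group $\Gamma_0(\n/\p,\p)$ and the external invariance lemma, at the (harmless, since $R$ is a coefficient ring) cost of needing $|\p|\in R^{\times}$. Two small notes: your Fourier-coefficient formula $(f|U_{\p})^{\ast}(\n')=|\p|\,f^{\ast}(\p\n')$ is correct but stated without full detail; it is the general form of the identity $(f|T_{\m})^{\ast}(1)=|\m|\,f^{\ast}(\m)$ that the paper cites from P{\'a}l, so you may simply reference that. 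And when you invoke the "uniqueness clause'' at the end, you implicitly rely on $h|B_{\p}$ being alternating and harmonic as well as $\Gamma_\infty$-invariant; both follow at once since $B_{\p}$ acts by a tree automorphism, but it is worth saying so explicitly.
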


\begin{proof}
Let $h:=f|B_{\p}^{-1}$ and $$\Gamma_0(\n/\p,\p) := \left\{\begin{pmatrix}
    a & b\\ c & d
\end{pmatrix}\in G(A)\mid c\in \n/\p,~b\in \p\right\}.$$ By \cite[Lemma 2.13]{papikian_rational_2017}, $h$ is $\Gamma_0(\n/\p,\p)$-invariant and harmonic. Moreover, by Theorem \ref{Thm: Fourier expansion}, for any $e = \begin{pmatrix}
\pi_{\infty}^k & y\\ 0 & 1
\end{pmatrix}\in E(\Tree)$ and $a\in A$, we have \begin{align*}
&h\left(\begin{pmatrix}
    1&a\\0&1
\end{pmatrix}e\right) = f\left(\begin{pmatrix}
    \pi_{\infty}^{k+\deg \p} & y+\frac{a}{\p} \\ 0 & 1
\end{pmatrix}\right)\\
&=f^0(\pi_{\infty}^{k+\deg \p})+\sum_{\substack{0 \neq m\in A\\ \deg m\leq k+\deg \p-2}}f^{\ast}(\divisor(m)\infty^{k+\deg \p -2})\eta\left(my+\frac{ma}{\p}\right)\\
&=f^0(\pi_{\infty}^{k+\deg \p})+\sum_{\substack{0 \neq m\in A\\ \deg m\leq k+\deg \p-2}}f^{\ast}(\divisor(m)\infty^{k+\deg \p -2})\eta(my)=h(e),
\end{align*}
where the third equality comes from the assumption that $f^\ast(\m)=0$ unless $\p$ divides $\m$. Thus, $h$ is also $\Gamma_{\infty}$-invariant, which completes the proof since $\Gamma_0(\n/\p)$ is generated by $\Gamma_0(\n/\p,\p)$ and $\Gamma_{\infty}$.
\end{proof}
Now, we are able to prove the following:
\begin{prop}\label{H(n,Z_l)}
$\mathcal{E}(\n, R)[U_{\p}] = R\cdot E_{\n}$.
\end{prop}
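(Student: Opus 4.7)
The inclusion $R\cdot E_{\n}\subseteq \mathcal{E}(\n,R)[U_{\p}]$ is precisely Lemma \ref{E_n is annihilated by E(n) and U_p}, so all the work lies in the reverse inclusion. My plan is, for a given $f\in \mathcal{E}(\n,R)[U_{\p}]$, to subtract an appropriate scalar multiple of $E_{\n}$ so that the difference has vanishing first Fourier coefficient, then use the Eisenstein eigenvalues to force all Fourier coefficients indexed by divisors coprime to $\p$ to vanish, then apply the Atkin--Lehner type Theorem \ref{Atkin-Lehner type result} to descend the difference to $\mathcal{H}(\n/\p,R)$ via the operator $B_{\p}$, and finally use the $U_{\p}$-annihilation together with the identity $(h|B_{\p})|U_{\p}=|\p|\cdot h$ to conclude that the difference vanishes.

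The first concrete step is to compute $E_{\n}^{\ast}(1)$. Expanding the logarithmic product in the definition of $E_{\n}$ one has $E_{\n}=\frac{1}{(q-1)(q^2-1)}\bigl(|\p|\,\widetilde{r}(\Delta)-(|\p|+1)\,\widetilde{r}(\Delta_{\p})+\widetilde{r}(\Delta_{\p^2})\bigr)$. By Lemma \ref{lem: F. coeff. of Delta_d} only the $\widetilde{r}(\Delta)$ term contributes to $E_{\n}^{\ast}(1)$, and using Equation (\ref{eq: Fourier coeff. of Delta}) one obtains $E_{\n}^{\ast}(1)=|\p|/q$. Because $R$ is a coefficient ring, $1/p\in R$, so $q$ (a power of $p$) and $|\p|=q^{\deg(\p)}$ are both units in $R$; hence $|\p|/q$ is invertible. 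I then set $\lambda:=f^{\ast}(1)\cdot q/|\p|\in R$ and put $g:=f-\lambda E_{\n}$, which still lies in $\mathcal{E}(\n,R)[U_{\p}]$ and now satisfies $g^{\ast}(1)=0$.

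Since $g\in \mathcal{E}(\n,R)$, the element $g$ is a simultaneous eigenform of all $T_{\q}$ with $\q\nmid\p$ with eigenvalue $|\q|+1$, so Lemma \ref{lemma: eigenfunction} forces $g^{\ast}(\m)=0$ for every $\m$ coprime to $\p$. Theorem \ref{Atkin-Lehner type result} then supplies $h\in\mathcal{H}(\n/\p,R)$ with $g=h|B_{\p}$; such $h$ is in particular $\Gamma_{\infty}$-invariant, so the calculation recalled in Example \ref{Ex: U_p action on harm. cochains} gives $(h|B_{\p})|U_{\p}=|\p|\cdot h$. Applying $U_{\p}$ to $g$ therefore yields $|\p|\cdot h=0$, and the invertibility of $|\p|$ in $R$ forces $h=0$, hence $g=0$ and $f=\lambda E_{\n}$. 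The only real obstacle is the invertibility of $|\p|$ and $q$ in $R$; this is exactly the coefficient ring hypothesis $1/p\in R$ and is ultimately the reason the method is confined to primes $\ell$ with $\ell\nmid q$.
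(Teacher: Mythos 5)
Your proposal is correct and follows essentially the same route as the paper's proof: compute $E_{\n}^{\ast}(1)=|\p|/q\in R^{\ast}$ (your explicit expansion via $\widetilde r$ being a homomorphism, combined with Equation (\ref{eq: Fourier coeff. of Delta}) and Lemma \ref{lem: F. coeff. of Delta_d}, is exactly how the paper obtains this), subtract the appropriate multiple of $E_{\n}$ to kill the first Fourier coefficient, invoke Lemma \ref{lemma: eigenfunction} and Theorem \ref{Atkin-Lehner type result} to descend via $B_{\p}$, and then use $(h|B_{\p})|U_{\p}=|\p|\cdot h$ together with the invertibility of $|\p|$ in the coefficient ring to force $h=0$. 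No gaps; the argument matches the paper's in structure and in every key step.
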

\begin{proof}
Note that $R\cdot E_{\n}\subseteq \mathcal{E}(\n, R)[U_{\p}]$. It suffices to prove the reverse inclusion. By Equation (\ref{eq: Fourier coeff. of Delta}) and Lemma \ref{lem: F. coeff. of Delta_d}, we have $$E_{\n}^0(\pi_\infty^k) = 0\text{~and~}E_{\n}^{\ast}(1) = \frac{|\p|}{q}\in R^{\ast}.$$ Let $f\in \mathcal{E}(\n, R)[U_{\p}]$, then consider $$\Tilde{f} := f - f^{\ast}(1)\cdot\frac{q}{|\p|}\cdot E_{\n}\in \mathcal{E}(\n, R)[U_{\p}].$$ First, by Lemma \ref{lemma: eigenfunction} and Theorem \ref{Atkin-Lehner type result}, since $\Tilde{f}^{\ast}(1) = 0$, there exists $h\in \mathcal{H}(\n/\p,R)$ which is $\Gamma_{\infty}$-invariant such that $h|B_{\p} = \Tilde{f}$. Note that $$(h|B_{\p})|U_{\p}=\Tilde{f}|U_{\p}=0.$$ Second, by \cite[Lemma 2.23]{papikian_eisenstein_2015}, since $h$ is $\Gamma_{\infty}$-invariant, we have $$|\p|\cdot h = (h|B_{\p})|U_{\p} = 0.$$
Hence, $h=0$ and $\Tilde{f} = h|B_{\p} = 0$, which implies that $$f = f^{\ast}(1)\cdot\frac{q}{|\p|}E_{\n}\in R\cdot E_{\n}.$$
\end{proof}

Recall that 
$$\left\{\begin{pmatrix}
    \pi_\infty^{-j} & 0\\ 0 & 1
\end{pmatrix}\in E(\Tree)\mid j\geq 0\right\}$$ forms an end on the Bruhat-Tits tree $\Tree$ towards $\infty$; cf. \cite[pp. 184 - 185]{gekeler_1997}. Then consider the action of $\begin{pmatrix}
    1 & 0\\ \p & 1
\end{pmatrix}\in G(A)$ on $\Tree$. We obtain that
$$\left\{e_j:=\begin{pmatrix}
    1 & 0\\ \p & 1
\end{pmatrix}\begin{pmatrix}
    \pi_\infty^{-j} & 0\\ 0 & 1
\end{pmatrix}\in E(\Tree)\mid j\geq 0\right\}$$
forms another end on $\Tree$ corresponding to $\begin{bmatrix}
1\\\p
\end{bmatrix}=\begin{pmatrix}
1&0\\\p&1
\end{pmatrix}\cdot\begin{bmatrix}
1\\0
\end{bmatrix}\in \mathbb{P}^1(K_\infty)$. This implies that $\{\Gamma_0(\n)\cdot e_j\mid j\geq 0\}$ forms an end on the quotient graph $\Gamma_0(\n)\backslash\Tree$ corresponding to the cusp $\Gamma_0(\n)\cdot\begin{bmatrix}
1\\\p
\end{bmatrix}$ on $X_0(\n)$; cf. \cite[Definition 2.5]{papikian_rational_2017}. Before computing $\mathcal{E}_{0}(\n, R)[U_{\p}]$, we consider the following:
\begin{lem}[cf. {\cite[Lemma 2.4]{ho_rational_2024}}] \label{eval of widetilde{r}(Delta)}
Let $j\geq 0$. Then we have
\begin{enumerate}
    \item $\widetilde{r}(\Delta)(e_j) = -(q-1)(q^{j+1}-q-1)$.
    \item Assume that $k\geq 1$ and $j\geq \max\{(k-2)\deg(\p), 0\}$. Then $$\widetilde{r}(\Delta_{\p^k})(e_j) = -(q-1)(q^{j+1}|\p|^{2-k}-q-1).$$
\end{enumerate}
\end{lem}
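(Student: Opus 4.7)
The plan is to reduce the evaluation of $\widetilde{r}(\Delta)$ and $\widetilde{r}(\Delta_{\p^k})$ at the edges $e_j$ to evaluations at standard edges $\begin{pmatrix}\pi_\infty^m & 0 \\ 0 & 1\end{pmatrix}$, where the Fourier-coefficient formulas of Equation~(\ref{eq: Fourier coeff. of Delta}) and Lemma~\ref{lem: F. coeff. of Delta_d} apply directly.

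For part~(1), write $e_j = \gamma\cdot\begin{pmatrix}\pi_\infty^{-j} & 0 \\ 0 & 1\end{pmatrix}$ with $\gamma=\begin{pmatrix}1 & 0 \\ \p & 1\end{pmatrix}\in \GL_2(A)$. Since $\Delta$ is a Drinfeld modular form of weight $q^2-1$ and type $0$, we have $\Delta\circ\gamma=(\p z+1)^{q^2-1}\cdot\Delta$ on $\Omega$. Applying the van der Put map (which is multiplicative and $G(K_\infty)$-equivariant by Theorem~\ref{thm: van der Put map}) and evaluating at $\begin{pmatrix}\pi_\infty^{-j} & 0 \\ 0 & 1\end{pmatrix}$, one gets
$$
\widetilde{r}(\Delta)(e_j) \;=\; \widetilde{r}(\Delta)^0(\pi_\infty^{-j})\;+\;(q^2-1)\,\widetilde{r}(\p z+1)\!\left(\!\begin{pmatrix}\pi_\infty^{-j} & 0 \\ 0 & 1\end{pmatrix}\!\right).
$$
The first term equals $-(q-1)q^{j+1}$ by Equation~(\ref{eq: Fourier coeff. of Delta}). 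For the second, since constants lie in $\ker\widetilde{r}$, one has $\widetilde{r}(\p z+1)=\widetilde{r}(z+1/\p)$; a direct rigid-analytic calculation (tracking $|z+1/\p|$ across the Gauss points of vertices along the standard geodesic from $v_0$ toward $\infty$, using that the zero $-1/\p\in K$ satisfies $|-1/\p|<1$ and so sits in the opposite tree branch) shows this value equals $+1$ on every such standard edge for $j\geq 0$. Combining yields $-(q-1)q^{j+1}+(q^2-1)=-(q-1)(q^{j+1}-q-1)$.

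For part~(2), use $\Delta_{\p^k}=\Delta|B_{\p^k}$ with $B_{\p^k}=\begin{pmatrix}\p^k & 0 \\ 0 & 1\end{pmatrix}$, so that $\widetilde{r}(\Delta_{\p^k})(e_j)=\widetilde{r}(\Delta)(B_{\p^k}e_j)$. The matrix identity $B_{\p^k}\gamma=\gamma_k B_{\p^k}$ with $\gamma_k=\begin{pmatrix}1 & 0 \\ \p^{1-k} & 1\end{pmatrix}$, together with the equivalence
$$
B_{\p^k}\begin{pmatrix}\pi_\infty^{-j} & 0 \\ 0 & 1\end{pmatrix}\;\equiv\;\begin{pmatrix}\pi_\infty^{-j-k\deg(\p)} & 0 \\ 0 & 1\end{pmatrix}\quad\bmod\;\mathcal{I}\cdot Z(K_\infty),
$$
reduces the problem to evaluating $\widetilde{r}(\Delta)$ on $\gamma_k\begin{pmatrix}\pi_\infty^{-j-k\deg(\p)} & 0 \\ 0 & 1\end{pmatrix}$. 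The hypothesis $j\geq(k-2)\deg(\p)$ is exactly what ensures the shifted exponent lies in the regime $m\leq 1$, so that the value of $\widetilde{r}(\Delta)$ on the relevant standard edge is given entirely by its constant Fourier coefficient. For $k=1$, $\gamma_1=\begin{pmatrix}1 & 0 \\ 1 & 1\end{pmatrix}\in\SL_2(A)$ and the argument of part~(1) applies with $\p$ replaced by $1$; for $k\geq 2$ one proceeds by induction on $k$ using the identity $\Delta_{\p^k}=\Delta_{\p^{k-1}}|B_\p$, which translates the shift $j\mapsto j+\deg(\p)$ under $B_\p$ and reduces to the previously-settled case.

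The main obstacle is the explicit rigid-analytic computation of $\widetilde{r}(z+1/\p)$ on the standard edges, which demands careful attention to orientation conventions for the van der Put map and to the identification of the tree branch containing the end $-1/\p$. A secondary technical point is the bookkeeping in part~(2): verifying that the matrix identity $B_{\p^k}\gamma=\gamma_k B_{\p^k}$ combined with the shift of the standard-edge index preserves the hypothesis $j\geq(k-2)\deg(\p)$ at each inductive step.
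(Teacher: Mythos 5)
Your approach is genuinely different from the paper's: you use the modular transformation law for $\Delta$ together with the multiplicativity of the van der Put map, whereas the paper proves the lemma by purely matrix-theoretic manipulations in $\GL_2(K_\infty)$ that rewrite $e_j$ (resp.\ $B_{\p^k}e_j$) as $\gamma\cdot\begin{pmatrix}\pi_\infty^{m}&0\\0&1\end{pmatrix}\begin{pmatrix}0&1\\\pi_\infty&0\end{pmatrix}$ with $\gamma\in\Gamma_\infty$ or $\GL_2(A)$, and then invokes Gekeler's formula \cite[(2.3)]{gekeler_1997} for $\widetilde{r}(\Delta)$ on that orbit. Your decomposition of $\widetilde{r}(\Delta)(e_j)$ into the constant Fourier coefficient plus $(q^2-1)\widetilde{r}(\p z+1)$ is a correct application of equivariance of $\widetilde{r}$, and the arithmetic checks out \emph{if} the claim $\widetilde{r}(\p z+1)\bigl(\begin{smallmatrix}\pi_\infty^{-j}&0\\0&1\end{smallmatrix}\bigr)=1$ for all $j\geq 0$ holds. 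But you do not prove that claim; ``a direct rigid-analytic calculation \dots\ shows this value equals $+1$'' is precisely the lemma you need, and it is the crux. It is plausible (the geodesic from the end $-1/\p$, with $|1/\p|<1$, merges with the standard geodesic to $\infty$ before reaching any edge with index $j\geq 0$), but the sign and orientation conventions for $\widetilde{r}$ must be nailed down; as you yourself flag, this is the ``main obstacle,'' which means it is not a proof.

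Part~(2) has a more structural problem. You observe, correctly, that $\gamma_k=\begin{pmatrix}1&0\\\p^{1-k}&1\end{pmatrix}$ satisfies $B_{\p^k}\gamma=\gamma_kB_{\p^k}$, but for $k\geq 2$ the entry $\p^{1-k}\notin A$, so $\gamma_k\notin\GL_2(A)$ and the automorphy-factor argument is unavailable. Your fallback induction via $\Delta_{\p^k}=\Delta_{\p^{k-1}}|B_\p$ does not close either: one computes $B_\p e_j = \begin{pmatrix}1&0\\1&1\end{pmatrix}\begin{pmatrix}\pi_\infty^{-j-\deg\p}&0\\0&1\end{pmatrix}$ in $E(\Tree)$, because $B_\p$ does not commute with $\begin{pmatrix}1&0\\\p&1\end{pmatrix}$. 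This edge lies along the end at $1\in\mathbb{P}^1(K)$, not along $e_{j'}$ for any $j'$, so the inductive hypothesis (a formula for $\widetilde{r}(\Delta_{\p^{k-1}})(e_{j'})$) does not apply to it. You would need a separate formula for $\widetilde{r}(\Delta_{\p^m})$ along the end at $1$, which is exactly the kind of auxiliary computation the paper's matrix manipulations avoid by directly absorbing the lower-triangular perturbation into an upper-triangular matrix in $\Gamma_\infty$ (for $k\geq 1$) plus the edge-reversal element $\begin{pmatrix}0&1\\\pi_\infty&0\end{pmatrix}$.
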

\begin{proof}
For $k\geq 0$ and $j\geq 0$, we have
\begin{align*}
&\begin{pmatrix}
    \p^k\pi_\infty^{-j} & 0\\ \p\pi_\infty^{-j} & 1
\end{pmatrix}\begin{pmatrix}
    1 & \p^{1-k}\pi_\infty^{j+(2-k)\deg(\p)}\\ 0 & -(\p\pi_\infty^{\deg(\p)})^{2-k}
\end{pmatrix}\begin{pmatrix}
    \p^{-1}\pi_\infty^{j+1} & 0\\ 0 & \p^{-1}\pi_\infty^{j+1}
\end{pmatrix}\\
&=\begin{pmatrix}
    \p^{k-1}\pi_\infty & \pi_\infty^{j+1+(2-k)\deg(\p)}\\ \pi_\infty & 0
\end{pmatrix}=\begin{pmatrix}
    \pi_\infty^{j+1+(2-k)\deg(\p)} & \p^{k-1}\\ 0 & 1
\end{pmatrix}\begin{pmatrix}
    0 & 1\\ \pi_\infty & 0
\end{pmatrix}.
\end{align*}
Thus, $\begin{pmatrix}
    \p^k\pi_\infty^{-j} & 0\\ \p\pi_\infty^{-j} & 1
\end{pmatrix}=\begin{pmatrix}
    \pi_\infty^{j+1+(2-k)\deg(\p)} & \p^{k-1}\\ 0 & 1
\end{pmatrix}\begin{pmatrix}
    0 & 1\\ \pi_\infty & 0
\end{pmatrix}$ in $E(\Tree)$.

First, assume that $k = 0$ and $j\geq 0$, then 
\begin{align*}
&\begin{pmatrix}
    \pi_\infty^{j+1+2\deg(\p)} & \p^{-1}\\ 0 & 1
\end{pmatrix}\begin{pmatrix}
    0 & 1\\ \pi_\infty & 0
\end{pmatrix}\begin{pmatrix}
    \p\pi_\infty^{\deg(\p)} & \pi_\infty^{j+\deg(\p)}\\ 0 & -(\p\pi_\infty^{\deg(\p)})^{-1}
\end{pmatrix}\begin{pmatrix}
    \pi_\infty^{-\deg(\p)} & 0\\ 0 & \pi_\infty^{-\deg(\p)}
\end{pmatrix}\\
&=\begin{pmatrix}
    \pi_\infty & 0\\ \p\pi_\infty & \pi_\infty^{j+1}
\end{pmatrix}=\begin{pmatrix}
    0 & 1\\ 1 & \p
\end{pmatrix}\begin{pmatrix}
    \pi_\infty^{j+1} & 0\\ 0 & 1
\end{pmatrix}\begin{pmatrix}
    0 & 1\\ \pi_\infty & 0
\end{pmatrix}.
\end{align*}
This implies that
\begin{align*}
&\widetilde{r}(\Delta)(e_j)=\widetilde{r}(\Delta)(\begin{pmatrix}
    \pi_\infty^{-j} & 0\\ \p\pi_\infty^{-j} & 1
\end{pmatrix})\\
&=\widetilde{r}(\Delta)(\begin{pmatrix}
    \pi_\infty^{j+1+2\deg(\p)} & \p^{-1}\\ 0 & 1
\end{pmatrix}\begin{pmatrix}
    0 & 1\\ \pi_\infty & 0
\end{pmatrix})\\
&=\widetilde{r}(\Delta)(\begin{pmatrix}
    0 & 1\\ 1 & \p
\end{pmatrix}\begin{pmatrix}
    \pi_\infty^{j+1} & 0\\ 0 & 1
\end{pmatrix}\begin{pmatrix}
    0 & 1\\ \pi_\infty & 0
\end{pmatrix})\\
&= -\widetilde{r}(\Delta)(\begin{pmatrix}
    \pi_\infty^{j+1} & 0\\ 0 & 1
\end{pmatrix})=-(q-1)(q^{j+1}-q-1),
\end{align*}
where the penultimate equality comes from \cite[Equation (2.3)]{gekeler_1997}.

Second, assume that $k \geq 1$ and $j\geq \max\{(k-2)\deg(\p), 0\}$, then 
\begin{align*}
&\widetilde{r}(\Delta_{\p^k})(e_j)=\widetilde{r}(\Delta)(\begin{pmatrix}
    \p^k & 0\\ 0 & 1
\end{pmatrix}e_j)=\widetilde{r}(\Delta)(\begin{pmatrix}
    \p^k\pi_\infty^{-j} & 0\\ \p\pi_\infty^{-j} & 1
\end{pmatrix})\\
&=\widetilde{r}(\Delta)(\begin{pmatrix}
    \pi_\infty^{j+1+(2-k)\deg(\p)} & \p^{k-1}\\ 0 & 1
\end{pmatrix}\begin{pmatrix}
    0 & 1\\ \pi_\infty & 0
\end{pmatrix})\\
&=\widetilde{r}(\Delta)(\begin{pmatrix}
    1 & \p^{k-1}\\ 0 & 1
\end{pmatrix}\begin{pmatrix}
    \pi_\infty^{j+1+(2-k)\deg(\p)} & 0\\ 0 & 1
\end{pmatrix}\begin{pmatrix}
    0 & 1\\ \pi_\infty & 0
\end{pmatrix})\\
&= -\widetilde{r}(\Delta)(\begin{pmatrix}
    \pi_\infty^{j+1+(2-k)\deg(\p)} & 0\\ 0 & 1
\end{pmatrix})=-(q-1)(q^{j+1}|\p|^{2-k}-q-1),
\end{align*}
where the penultimate equality again comes from \cite[Equation (2.3)]{gekeler_1997}.
\end{proof}

\begin{lem}\label{Weak lemma}
Let $R\left[M(\p)\right]:=\{c\in R\mid M(\p)\cdot c = 0\}$. Then $$\mathcal{E}_{0}(\n, R)[U_{\p}] \subseteq R\left[M(\p)\right]\cdot E_{\n}.$$
\end{lem}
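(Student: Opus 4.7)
The plan is to combine Proposition \ref{H(n,Z_l)} with the cuspidality hypothesis by evaluating the candidate generator $E_{\n}$ on the edges $e_j$ that lie far out in the end of $\Gamma_0(\n)\backslash \Tree$ corresponding to the cusp $\begin{bmatrix}1 \\ \p\end{bmatrix}$. Since $\mathcal{E}_0(\n, R)[U_{\p}] \subseteq \mathcal{E}(\n, R)[U_{\p}] = R \cdot E_{\n}$, any $f \in \mathcal{E}_0(\n, R)[U_{\p}]$ may be written uniquely as $f = c \cdot E_{\n}$ for some $c \in R$, and the task reduces to showing $c \cdot M(\p) = 0$, i.e., $c \in R[M(\p)]$.

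The first step is an explicit computation of $E_{\n}(e_j)$ for $j \geq 0$. Since $\widetilde{r}$ is a homomorphism from multiplicative to additive groups,
$$\widetilde{r}\!\left(\left(\tfrac{\Delta}{\Delta_{\p}}\right)^{|\p|}\!\tfrac{\Delta_{\p^2}}{\Delta_{\p}}\right) = |\p|\,\widetilde{r}(\Delta) - (|\p|+1)\,\widetilde{r}(\Delta_{\p}) + \widetilde{r}(\Delta_{\p^2}),$$
and Lemma \ref{eval of widetilde{r}(Delta)} provides closed formulas for each summand at $e_j$. The hypothesis $j \geq \max\{(k-2)\deg(\p), 0\}$ is automatically satisfied for $k = 1, 2$ and $j \geq 0$. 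I expect the $(q+1)$ constant contributions to cancel against each other, leaving only the $q^{j+1}$ terms, and after dividing by $(q-1)(q^2-1)$ I anticipate the clean identity
$$E_{\n}(e_j) = q^{j+1}\cdot M(\p) \quad \text{for all } j \geq 0.$$

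The second step invokes cuspidality. Because $\{\Gamma_0(\n)\cdot e_j \mid j \geq 0\}$ forms an end of the quotient graph $\Gamma_0(\n)\backslash \Tree$ and $f$ has compact support modulo $\Gamma_0(\n)$, there exists $j_0$ so that $f(e_j) = 0$ for all $j \geq j_0$. This yields $c \cdot q^{j+1} \cdot M(\p) = 0$ in $R$. Since $R$ is a coefficient ring we have $1/p \in R$, and hence $q$ (a power of $p$) is a unit in $R$. Cancelling $q^{j+1}$ gives $c \cdot M(\p) = 0$, so $c \in R[M(\p)]$ and $f \in R[M(\p)]\cdot E_{\n}$, as required.

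The only genuinely non-formal step is the evaluation $E_{\n}(e_j) = q^{j+1} M(\p)$; everything else is bookkeeping. I expect the main obstacle to be arithmetic carefulness in the linear combination above: three terms of the form $-(q-1)(q^{j+1}|\p|^{2-k} - q - 1)$ must be combined with coefficients $|\p|, -(|\p|+1), 1$ respectively, and the cancellation of the inhomogeneous $(q+1)$ part depends on the identity $|\p| - (|\p|+1) + 1 = 0$, while the surviving $q^{j+1}$-coefficient equals $-(|\p| - (|\p|+1)|\p| + 1) = |\p|^2 - 1$, producing $M(\p)$ after division by $(q^2-1)$.
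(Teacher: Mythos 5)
Your proof is correct and follows exactly the paper's argument: apply Proposition \ref{H(n,Z_l)} to reduce to $f = c\cdot E_{\n}$, use Lemma \ref{eval of widetilde{r}(Delta)} (via linearity of $\widetilde{r}$) to obtain $E_{\n}(e_j) = q^{j+1}M(\p)$, then use cuspidality along the end $\{e_j\}$ together with $q\in R^{\times}$ to deduce $c\in R[M(\p)]$. The only difference is that you display the linear-combination bookkeeping explicitly, which the paper leaves to the reader.
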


\begin{proof}
By Proposition \ref{H(n,Z_l)}, $\mathcal{E}_0(\n, R)[U_{\p}]$ is contained in $R\cdot E_{\n}$. Suppose that $c\cdot E_{\n}\in \mathcal{E}_0(\n, R)[U_{\p}]$ for some $c\in R$. Let $e_j:=\begin{pmatrix}
    1 & 0\\ \p & 1
\end{pmatrix}\begin{pmatrix}
    \pi_\infty^{-j} & 0\\ 0 & 1
\end{pmatrix}\in E(\Tree)$ with $j\geq 0$, then $E_{\n}(e_j) = q^{j+1}M(\p)$ by Lemma \ref{eval of widetilde{r}(Delta)}. Since $q\in R^{\ast}$, we have $c\cdot E_{\n}(e_j) = 0$ for any $j \gg 0$ if and only if $c\in R\left[M(\p)\right]$, which completes the proof.
\end{proof}

Now, we prove the following:
\begin{prop}\label{C(n)_l=T(n)_l}
Let $\n=\p^r\lhd A$ with a prime $\p\lhd A$ and $r\geq 2$. Assume that $\ell$ is a prime not dividing $q(q-1)$. Then $$C(\n)_{\ell}[U_{\p}]=\TT(\n)_{\ell}[U_{\p}].$$
\end{prop}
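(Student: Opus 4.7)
The plan is to squeeze both $\CC(\n)_\ell[U_\p]$ and $\TT(\n)_\ell[U_\p]$ between two cyclic groups of the same cardinality $M(\p)_\ell$ (the $\ell$-primary part of $M(\p)$), so that every inclusion in the resulting chain is forced to be a bijection. In particular, the middle equality in that chain is the assertion of the proposition.

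For the lower bound I would use the explicit cuspidal divisor class $\overline{C'}$ of Equation (\ref{eq: C'}). By Lemma \ref{explicit degeneracy map} it is annihilated by $U_\p$, and the computation following Equation (\ref{eq: g(C')}) establishes that its order is exactly $M(\p)|\p|^{r-2}$. Since $|\p|$ is a power of $q$ and $\ell \neq p$, the $\ell$-primary subgroup $\langle \overline{C'}\rangle_\ell$ is cyclic of order $M(\p)_\ell$, and it sits inside $\CC(\n)_\ell[U_\p] \subseteq \TT(\n)_\ell[U_\p]$.

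For the upper bound I would fix $R = \mathbb{Z}/\ell^k\mathbb{Z}$ with $k$ large enough that $\ell^k \geq \#(\Phi_{\infty,\ell})$ and $\ell^k \geq M(\p)_\ell$. Chaining the injection of Lemma \ref{injectivity of specialization map}, the trivial inclusion $\mathcal{E}_{00} \subseteq \mathcal{E}_0$, and Lemma \ref{Weak lemma} gives
\[
\TT(\n)_\ell[U_\p] \hookrightarrow \mathcal{E}_{00}(\n, R)[U_\p] \subseteq \mathcal{E}_0(\n, R)[U_\p] \subseteq R[M(\p)] \cdot E_\n,
\]
and the rightmost group is a quotient of the cyclic group $R[M(\p)]$. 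Composing with the lower bound produces the injection of finite groups
\[
\mathbb{Z}_\ell/M(\p)\mathbb{Z}_\ell \cong \langle \overline{C'}\rangle_\ell \hookrightarrow \CC(\n)_\ell[U_\p] \hookrightarrow \TT(\n)_\ell[U_\p] \hookrightarrow R[M(\p)] \cdot E_\n.
\]

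The step that needs the most care is the cardinality bound $|R[M(\p)] \cdot E_\n| \leq M(\p)_\ell$: writing $M(\p) = \ell^a m$ with $\gcd(\ell,m) = 1$ and $a \leq k$, the congruence $M(\p) c \equiv 0 \pmod{\ell^k}$ forces $c$ to be divisible by $\ell^{k-a}$, so $R[M(\p)]$ is cyclic of order $\ell^a = M(\p)_\ell$. Once the source and the target of the composite injection both have cardinality $M(\p)_\ell$, the map must be a bijection, every inclusion in the chain becomes an equality, and in particular $\CC(\n)_\ell[U_\p] = \TT(\n)_\ell[U_\p]$.
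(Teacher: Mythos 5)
Your proposal is correct and follows essentially the same route as the paper: the lower bound from the explicit class $\overline{C'}$ and the upper bound via Lemma \ref{injectivity of specialization map} and Lemma \ref{Weak lemma} are exactly the two halves of Sequence (\ref{eq: main sequence}) and its continuation in the paper's proof, and the conclusion by matching cardinalities is the same. Your explicit computation that $R[M(\p)]$ is cyclic of order $M(\p)_\ell$ when $\ell^k \geq M(\p)_\ell$ is a useful unpacking of the injection $R[M(\p)]\cdot E_\n \hookrightarrow \mathbb{Z}_\ell/M(\p)\mathbb{Z}_\ell$ that the paper states without comment.
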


\begin{proof}
Let $R := \mathbb{Z}/\ell^k\mathbb{Z}$ with $k \gg 0$ such that $\TT(\n)_\ell\hookrightarrow \mathcal{E}_{00}(\n, R)$ in Lemma \ref{injectivity of specialization map}. By Lemma \ref{Weak lemma}, $$\mathcal{E}_{00}(\n, R)[U_\p]\subseteq \mathcal{E}_{0}(\n, R)[U_{\p}] \subseteq R\left[M(\p)\right]\cdot E_{\n}\hookrightarrow \frac{\mathbb{Z}_\ell}{M(\p)\mathbb{Z}_\ell}.$$ Thus, the equality holds in Sequence (\ref{eq: main sequence}). In particular, we have $$\CC(\n)_\ell[U_\p] = \TT(\n)_\ell[U_\p].$$
\end{proof}
From the above, we obtain our Main Theorem.

\section{Acknowledgments}
The author sincerely dedicates this paper to his supervisor, Prof. Mihran Papikian, for his invaluable guidance and encouragement. Special thanks are also extended to Prof. Fu-Tsun Wei for the insightful discussions on modular units. The author also expresses gratitude to Prof. Chieh-Yu Chang for the invitation to present this paper at NTHU and to Prof. Wen-Ching Winnie Li for the invitation to collaborate on a survey paper on Ogg's conjectures. Lastly, heartfelt appreciation goes to Chiu-Lin Huang for her companionship and emotional support.

\bibliographystyle{acm}
\bibliography{Bibliography}

\end{document}